\newtheorem{theorem}{Theorem}
\newtheorem{lemma}[theorem]{Lemma}
\newtheorem{corollary}[theorem]{Corollary}
\newtheorem{proposition}[theorem]{Proposition}
\newcommand{\tto}{\twoheadrightarrow}
\begin{document}

\title[Category $\mathcal{O}$ for Takiff $\mathfrak{sl}_2$]
{Category $\mathcal{O}$ for Takiff $\mathfrak{sl}_2$}

\author[V.~Mazorchuk and C.~S{\"o}derberg]
{Volodymyr Mazorchuk and Christoffer S{\"o}derberg}

\begin{abstract}
We investigate various ways to define an analogue of 
BGG category $\mathcal{O}$ for the non-semi-simple
Takiff extension of the Lie algebra $\mathfrak{sl}_2$.
We describe Gabriel quivers for blocks of these
analogues of category $\mathcal{O}$ and prove extension
fullness of one of them in the category of all modules.
\end{abstract}

\maketitle

\section{Introduction and description of the results}\label{s1}

The celebrated BGG category $\mathcal{O}$, introduced in \cite{BGG}, is 
originally associated to a triangular decomposition of a semi-simple 
finite dimensional complex Lie algebra. The definition of $\mathcal{O}$ is 
naturally generalized to all Lie algebras admitting some analogue of a 
triangular decomposition, see \cite{MP}. These include, in particular, 
Kac-Moody algebras and Virasoro algebra. Category $\mathcal{O}$ has a number
of spectacular properties and applications to various areas of mathematics,
see for example \cite{Hu} and references therein.

The paper \cite{DLMZ} took some first steps in trying to understand 
structure and properties of an analogue of category $\mathcal{O}$ 
in the case of a non-reductive finite dimensional Lie algebra. 
The investigation in \cite{DLMZ} focuses on category $\mathcal{O}$ for
the so-called Schr{\"o}dinger algebra, which is a central extension of
the semi-direct product of $\mathfrak{sl}_2$ with its natural $2$-dimensional
module. It turned out that, for the Schr{\"o}dinger algebra, the behavior 
of blocks of category $\mathcal{O}$   with non-zero central charge is exactly 
the same as the behavior of blocks of category $\mathcal{O}$ for the algebra 
$\mathfrak{sl}_2$. At the same, block with zero central charge turned out
to be significantly more difficult. For example, it was shown in \cite{DLMZ}
that some blocks of $\mathcal{O}$ for the Schr{\"o}dinger algebra have wild 
representation type, while all blocks of $\mathcal{O}$ for $\mathfrak{sl}_2$
have finite representation type.

In the present paper we look at a different non-reductive extension of 
the algebra $\mathfrak{sl}_2$, namely, the corresponding Takiff Lie algebra
$\mathfrak{g}$ defined as the semi-direct product of $\mathfrak{sl}_2$ 
with the adjoint  representation. Such Lie algebras were defined and studied by 
Takiff in \cite{Ta} with the primary interest coming from invariant theory. 
Alternatively, the Takiff Lie algebra $\mathfrak{g}$ can be described as
the tensor product $\mathfrak{sl}_2\otimes_{\mathbb{C}}\big(\mathbb{C}[x]/(x^2)\big)$.
The latter suggests an obvious generalization of the notion of a triangular
decomposition for $\mathfrak{g}$  by tensoring the components of a triangular
decomposition for $\mathfrak{sl}_2$ with $\mathbb{C}[x]/(x^2)$.

Having defined a triangular decomposition for $\mathfrak{g}$, we can define
Verma modules and try to guess a definition for category $\mathcal{O}$.
The latter turned out to be a subtle task as the most obvious definition
of category $\mathcal{O}$ does not really work as expected, in particular,
it does not contain Verma modules. This forced us to investigate two 
alternative definitions of category $\mathcal{O}$:
\begin{itemize}
\item the first one analogous to the definition of the so-called 
{\em thick category $\mathcal{O}$}, see, for example, \cite{Sor},
in which the action of the Cartan subalgebra is only expected to 
be locally finite and not necessarily semi-simple as in the classical definition;
\item and the second one given by the full subcategory of the thick category 
$\mathcal{O}$ from the first definition with the additional
requirement that the Cartan subalgebra of $\mathfrak{sl}_2$
acts diagonalizably.
\end{itemize}

The results of this paper fall into the following three categories:
\begin{itemize}
\item We describe the linkage between simple object in both our
versions of category $\mathcal{O}$ and in this
way explicitly determine all (indecomposable) blocks, see Theorem~\ref{thm9}.
\item We determine the Gabriel quivers for all blocks, see 
Corollaries~\ref{cor16}, \ref{cor23}, \ref{cor25}, \ref{cor33}.
\item We prove that thick category $\mathcal{O}$ is extension full
in the category of all $\mathfrak{g}$-modules, see Theorem~\ref{thm6}.
\end{itemize}
For some of the blocks, we also obtain not only a Gabriel quiver,
but also a fairly explicit description of the whole block,
see Theorem~\ref{thm15}. Some of the results are unexpected and
look rather surprizing. For example, the trivial $\mathfrak{g}$-module
exhibits behavior different from the behaviour of all other 
simple finite dimensional $\mathfrak{g}$-modules, compare
Lemma~\ref{lem27-1} and Proposition~\ref{prop27}.

The paper is organized as follows: All preliminaries are collected in
Section~\ref{s2}, in particular, in this section we define all
main protagonists of the paper and describe their basic properties.
In Section~\ref{s3} we prove extension fullness of thick category 
$\mathcal{O}$ in the category of all $\mathfrak{g}$-modules.
Section~\ref{s4} is devoted to the study of the decomposition of
both $\mathcal{O}$ and its thick version $\widetilde{\mathcal{O}}$ 
into indecomposable blocks.
As usual, generic Verma modules over $\mathfrak{g}$ are simple.
In Section~\ref{s6} we describe the structure of those Verma modules
that are not simple. Finally, in Section~\ref{s5}, we compute
first extensions between simple highest weight modules and in this
way determine the Gabriel quivers of all block in 
$\mathcal{O}$ and $\widetilde{\mathcal{O}}$. 

This paper is a revision, correction and extension of the master thesis 
\cite{Sod} of the second author written under the supervision of the 
first author.

\section{Takiff $\mathfrak{sl}_2$ and its modules}\label{s2}

\subsection{Takiff $\mathfrak{sl}_2$}\label{s2.1}

In this paper we work over the field $\mathbb{C}$ of complex numbers.
Consider the Lie algebra $\mathfrak{sl}_2$ with the standard basis
$\{e,h,f\}$ and the Lie bracket
\begin{displaymath}
[e,f]=h,\qquad [h,e]=2e,\qquad [h,f]=-2f. 
\end{displaymath}
Let $D:=\mathbb{C}[x]/(x^2)$ be the algebra of dual numbers.
Consider the associated {\em Takiff Lie algebra} 
$\mathfrak{g}=\mathfrak{sl}_2\otimes_{\mathbb{C}}D$
with the Lie bracket
\begin{displaymath}
[a\otimes x^i,b\otimes x^j]=[a,b]\otimes x^{i+j}, 
\end{displaymath}
where $a,b\in \mathfrak{sl}_2$ and $i,j\in\{0,1\}$ with
the Lie bracket on the right hand side being the $\mathfrak{sl}_2$-Lie bracket.
Set
\begin{displaymath}
\overline{e}:=e\otimes x,\qquad 
\overline{f}:=f\otimes x,\qquad 
\overline{h}:=h\otimes x. 
\end{displaymath}

Consider the standard triangular decomposition
\begin{displaymath}
\mathfrak{sl}_2=\mathfrak{n}_-\oplus\mathfrak{h}\oplus\mathfrak{n}_+, 
\end{displaymath}
where $\mathfrak{n}_-$ is generated by $f$, 
$\mathfrak{h}$ is generated by $h$ and $\mathfrak{n}_+$ is generated by $e$.
Let $\overline{\mathfrak{n}}_-$ be the subalgebra of $\mathfrak{g}$
generated by $e$ and $\overline{e}$, $\overline{\mathfrak{h}}$ 
the subalgebra of $\mathfrak{g}$ generated by $h$ and $\overline{h}$, and
$\overline{\mathfrak{n}}_+$ be the subalgebra of $\mathfrak{g}$
generated by $f$ and $\overline{f}$. The we have the following {\em triangular decomposition}
of $\mathfrak{g}$:
\begin{displaymath}
\mathfrak{g}=\overline{\mathfrak{n}}_-\oplus\overline{\mathfrak{h}}\oplus\overline{\mathfrak{n}}_+. 
\end{displaymath}
We set  $\mathfrak{b}:=\mathfrak{h}\oplus\mathfrak{n}_+$
and $\overline{\mathfrak{b}}:=\overline{\mathfrak{h}}\oplus
\overline{\mathfrak{n}}_+$.

For a Lie algebra $\mathfrak{a}$, we denote by $U(\mathfrak{a})$ the corresponding
universal enveloping algebra.

The natural projection $\mathfrak{g}\tto \mathfrak{sl}_2$ induced
an inclusion of $\mathfrak{sl}_2$-Mod to $\mathfrak{g}$-Mod,
which we denote by $\iota$.

By a direct calculation, it is easy to check that the {\em Casimir element}
\begin{equation}\label{eq4}
\mathtt{c}:=h\overline{h}+2\overline{h}+2f\overline{e}+2\overline{f}e 
\end{equation}
belongs to the center of $U(\mathfrak{g})$, see Example~1.2 in \cite{Mo}.

\subsection{(Generalized) weight modules}\label{s2.2}

A $\mathfrak{g}$-module $M$ is called a {\em generalized weight module}
provided that the action of $U(\overline{\mathfrak{h}})$ on $M$
is locally finite. As $U(\overline{\mathfrak{h}})$ is just the polynomial
algebra in $h$ and $\overline{h}$, every generalized weight module $M$
admits a decomposition
\begin{displaymath}
M=\bigoplus_{\lambda\in\overline{\mathfrak{h}}^*}M^{\lambda}, 
\end{displaymath}
where $M^{\lambda}$ denotes the set of all vectors in $M$
killed by some power of the maximal ideal $\mathbf{m}_{\lambda}$
of $U(\overline{\mathfrak{h}})$ corresponding to $\lambda$.
We will say that a generalized weight module $M$ is a 
{\em weight module} provided that the action of $h$ on $M$
is diagonalizable. We will say that a weight module $M$ is a 
{\em strong weight module} provided that the action of 
$\overline{\mathfrak{h}}$ on $M$ is diagonalizable. 

Note that submodules, quotients and extensions of generalized weight 
modules are generalized weight modules. Also submodules
and quotients of (strong) weight modules are (strong) weight modules.
From the commutation relations in $\mathfrak{g}$, 
for any generalized weight modules $M$ and any 
$\lambda\in\overline{\mathfrak{h}}^*$, we have
\begin{equation}\label{eq1}
\overline{\mathfrak{h}}M^{\lambda}\subset M^{\lambda},\qquad
\overline{\mathfrak{n}}_+M^{\lambda}\subset M^{\lambda+\alpha},\qquad
\overline{\mathfrak{n}}_-M^{\lambda}\subset M^{\lambda-\alpha},
\end{equation}
where $\alpha\in \overline{\mathfrak{h}}^*$ is given by
$\alpha(h)=2$, $\alpha(\overline{h})=0$.

If $M$ is a generalized weight module, then the set of all $\lambda$
for which $M^{\lambda}\neq 0$ is called the {\em support} of $M$
and denoted $\mathrm{supp}(M)$.

\subsection{Verma modules}\label{s2.3}

For a fixed $\lambda\in \overline{\mathfrak{h}}^*$, we have the
corresponding simple $U(\overline{\mathfrak{h}})$-module
$\mathbb{C}_{\lambda}:=U(\overline{\mathfrak{h}})/\mathbf{m}_{\lambda}$.
Setting $\overline{\mathfrak{n}}_+\mathbb{C}_{\lambda}=0$ defines
on $\mathbb{C}_{\lambda}$ the structure of a $\overline{\mathfrak{b}}$-module.
The $\mathfrak{g}$-module
\begin{displaymath}
\Delta(\lambda):=
\mathrm{Ind}^{U(\mathfrak{g})}_{U(\overline{\mathfrak{b}})}\, 
\big(\mathbb{C}_{\lambda}\big)\cong
U(\mathfrak{g})\bigotimes_{U(\overline{\mathfrak{b}})}\mathbb{C}_{\lambda}
\end{displaymath}
is called the {\em Verma module} associated to $\lambda$.
The standard argument, see Proposition~7.1.11 in \cite{Di}, shows that 
$\Delta(\lambda)$ has a unique simple quotient. We denote this
simple quotient of $\Delta(\lambda)$ by $L(\lambda)$.
From the PBW Theorem and formula \eqref{eq1}, it follows that
\begin{equation}\label{eq5}
\mathrm{supp}(\Delta(\lambda))=\{\lambda-n\alpha\,:\,n\in\mathbb{Z}_{\geq 0}\}. 
\end{equation}
In fact, from the PBW Theorem, it follows that, for 
$n\in \mathbb{Z}_{\geq 0}$, we have
\begin{equation}\label{eq2}
\dim\left(\Delta(\lambda)^{\lambda-n\alpha}\right)=n+1 
\end{equation}
as the elements $\{f^i\overline{f}^{n-i}v_{\lambda}\,:\, i=0,1,\dots,n\}$, 
where $v_{\lambda}$ denotes  the canonical generator of $\Delta(\lambda)$, 
form a basis of $\Delta(\lambda)^{\lambda-n\alpha}$.
The weight $\lambda$ is the {\em highest weight} of
$\Delta(\lambda)$.

The following simplicity criterion for $\Delta(\lambda)$
can be deduced from the main result of \cite{Wi}, however, we
include a short proof for the sake of completeness.

\begin{proposition}\label{prop1}
The module  $\Delta(\lambda)$ is simple if and only if 
$\lambda(\overline{h})\neq 0$.
\end{proposition}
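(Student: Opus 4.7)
The plan is to treat the two directions separately: the ``only if'' direction will be handled by exhibiting an explicit proper submodule of $\Delta(\lambda)$, while the ``if'' direction will be proved by using the central Casimir element $\mathtt{c}$ from \eqref{eq4} to compare central characters.

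For the ``only if'' direction I would prove the contrapositive: if $\lambda(\overline{h})=0$, I claim $\overline{f}v_\lambda$ is a nonzero $\overline{\mathfrak{h}}$-eigenvector of weight $\lambda-\alpha$ annihilated by $\overline{\mathfrak{n}}_+$, and hence generates a proper nonzero submodule of $\Delta(\lambda)$. The verifications use the commutation relations $[e,\overline{f}]=\overline{h}$, $[\overline{e},\overline{f}]=0$, $[\overline{h},\overline{f}]=0$ and $[h,\overline{f}]=-2\overline{f}$; under the assumption $\lambda(\overline{h})=0$ one obtains $e(\overline{f}v_\lambda)=\overline{e}(\overline{f}v_\lambda)=0$ together with the correct $\overline{\mathfrak{h}}$-eigenvalue $\lambda-\alpha$.

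For the ``if'' direction, suppose $\lambda(\overline{h})\neq 0$ and let $N\subseteq\Delta(\lambda)$ be a nonzero submodule. Using \eqref{eq5}, I choose the smallest $n_0\geq 0$ for which $N^{\lambda-n_0\alpha}\neq 0$. Since $\Delta(\lambda)^{\lambda-n_0\alpha}$ is finite-dimensional by \eqref{eq2} and $h$ already acts on it as the scalar $\lambda(h)-2n_0$, I extract an honest simultaneous $\overline{\mathfrak{h}}$-eigenvector $0\neq w\in N^{\lambda-n_0\alpha}$; by minimality of $n_0$ it is annihilated by $\overline{\mathfrak{n}}_+$, so $U(\mathfrak{g})w$ is a nonzero highest-weight quotient of $\Delta(\lambda-n_0\alpha)$.

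Now I would invoke the central element $\mathtt{c}$. A direct computation on $v_\lambda$ shows that $\mathtt{c}$ acts on $\Delta(\lambda)$ as the scalar $\lambda(\overline{h})(\lambda(h)+2)$, while the analogous computation on the generator of $\Delta(\lambda-n_0\alpha)$ (using $\alpha(\overline{h})=0$) gives that $\mathtt{c}$ acts on $U(\mathfrak{g})w$ as $\lambda(\overline{h})(\lambda(h)-2n_0+2)$. Since $U(\mathfrak{g})w\subseteq\Delta(\lambda)$, the two scalar actions must agree, forcing $2n_0\lambda(\overline{h})=0$; hence $n_0=0$, so $w\in\Delta(\lambda)^\lambda=\mathbb{C}v_\lambda$ and $N=\Delta(\lambda)$. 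The most delicate step will be the extraction of the honest $\overline{\mathfrak{h}}$-eigenvector $w$ inside a generalized weight space, which is handled by the finite-dimensionality noted above.
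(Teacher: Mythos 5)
Your proof is correct, and while the reducibility direction (exhibiting $\overline{f}v_{\lambda}$ as a singular vector of weight $\lambda-\alpha$ when $\lambda(\overline{h})=0$) coincides with the paper's, your simplicity direction takes a genuinely different route. The paper works entirely in the PBW basis: it writes an arbitrary $w\in N^{\lambda-n\alpha}$ as $\sum_i c_i f^i\overline{f}^{n-i}v_{\lambda}$, applies $(\overline{h}-\lambda(\overline{h}))^k$ to isolate $\overline{f}^{n}v_{\lambda}\in N$, and then applies $e$ (yielding $n\lambda(\overline{h})\overline{f}^{n-1}v_{\lambda}\neq 0$) to contradict the minimality of $n$. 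You instead extract an honest highest-weight vector $w$ of weight $\lambda-n_0\alpha$ inside $N$ and compare the scalar $\lambda(\overline{h})(\lambda(h)-2n_0+2)$ by which the central element $\mathtt{c}$ acts on $U(\mathfrak{g})w$ with the scalar $\lambda(\overline{h})(\lambda(h)+2)$ by which it acts on all of $\Delta(\lambda)$, forcing $2n_0\lambda(\overline{h})=0$ and hence $n_0=0$. Your delicate step is sound: $N^{\lambda-n_0\alpha}$ is a nonzero finite-dimensional $\overline{\mathfrak{h}}$-stable subspace of a generalized weight space, so the commuting operators $h,\overline{h}$ admit a common eigenvector there, with $\overline{h}$-eigenvalue necessarily $\lambda(\overline{h})$; and $\alpha(\overline{h})=0$ is precisely what makes the two central characters differ by $2n_0\lambda(\overline{h})$. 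The trade-off: the paper's computation uses nothing beyond the commutation relations and, as a by-product, locates the explicit vector $\overline{f}^{n}v_{\lambda}$ in every nonzero submodule; your central-character argument is shorter and is essentially the same mechanism the paper itself deploys later (in the proof of Proposition~\ref{prop7}) to separate simples within the blocks with $\lambda(\overline{h})\neq 0$, at the price of invoking the centrality of $\mathtt{c}$, which the paper cites rather than verifies.
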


\begin{proof}
Let $v_{\lambda}$ be the canonical generator of $\Delta(\lambda)$.
Assume first that $\lambda(\overline{h})=0$ and consider the element
$w=\overline{f}v_{\lambda}$. Then we have $ew=\overline{e}w=0$
and hence, from the PBW Theorem and \eqref{eq1}, it follows that 
the submodule $N$ in $\Delta(\lambda)$ generated by $w$
satisfies $N^{\lambda}=0$ and thus is a non-zero proper submodule.
Therefore $\Delta(\lambda)$ is reducible in this case.

Now assume that $\lambda(\overline{h})\neq 0$. We need to show that 
any non-zero submodule $N$ of $\Delta(\lambda)$ contains $v_{\lambda}$.
If $N^{\lambda}\neq 0$, then the fact that $v_{\lambda}\in N$ is clear.
Assume now that $N^{\lambda}=0$ and let $n\in\mathbb{Z}_{>0}$ be 
minimal such that $N^{\lambda-n\alpha}\neq 0$. Let $w\in N^{\lambda-n\alpha}$
be a non-zero element. Using the PBW Theorem, we may write
\begin{displaymath}
w=\sum_{i=0}^nc_if^i\overline{f}^{n-i}v_{\lambda}.
\end{displaymath}
Denote the maximal value of $i$ such that $c_{i}\neq 0$ by $k$. 
Then it is easy to check that $(\overline{h}-\lambda(\overline{h}))^kw$ equals
$\overline{f}^{n}v_{\lambda}$ up to a non-zero scalar.
In particular, $N$ contains $\overline{f}^{n}v_{\lambda}$.
But then it is easy to check that
$e\overline{f}^{n}v_{\lambda}$ equals $\overline{f}^{n-1}v_{\lambda}$,
up to a non-zero scalar. In particular, $N^{\lambda-n\alpha+\alpha}\neq 0$.
The obtained contradiction proves that 
$N^{\lambda}\neq 0$ and the claim of the proposition follows.
\end{proof}

For $\mu\in\mathfrak{h}^*$, we denote by $\Delta^{\mathfrak{sl}_2}(\mu)$
the $\mathfrak{sl}_2$-Verma module with highest weight $\mu$ and
by $L^{\mathfrak{sl}_2}(\mu)$ the unique simple quotient of 
$\Delta^{\mathfrak{sl}_2}(\mu)$. From Proposition~\ref{prop1},
we  obtain the following corollary.

\begin{corollary}\label{cor2}
For $\lambda\in\overline{\mathfrak{h}}^*$, we have
\begin{displaymath}
L(\lambda)\cong
\begin{cases}
\Delta(\lambda),& \text{ if }\lambda(\overline{h})\neq 0;\\
\iota(L^{\mathfrak{sl}_2}(\lambda\vert_{\mathfrak{h}})),
&\text{ if }\lambda(\overline{h})= 0.
\end{cases}
\end{displaymath}
\end{corollary}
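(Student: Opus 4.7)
The plan is to handle the two cases separately. The first case, $\lambda(\overline{h})\neq 0$, is immediate: Proposition~\ref{prop1} tells us $\Delta(\lambda)$ is already simple, so by definition $L(\lambda)=\Delta(\lambda)$, which is what the formula claims.

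For the second case, $\lambda(\overline{h})=0$, my plan is to exhibit $\iota(L^{\mathfrak{sl}_2}(\lambda\vert_{\mathfrak{h}}))$ as a simple quotient of $\Delta(\lambda)$ and then invoke uniqueness of the simple quotient. First I would observe that $\overline{\mathfrak{g}}:=\mathfrak{sl}_2\otimes x D$ is an abelian ideal of $\mathfrak{g}$ (because $x^2=0$), and the pullback $\iota(L^{\mathfrak{sl}_2}(\lambda\vert_{\mathfrak{h}}))$ along $\mathfrak{g}\tto\mathfrak{sl}_2$ is precisely the $\mathfrak{g}$-module on which $\overline{\mathfrak{g}}$ acts as zero and $\mathfrak{sl}_2$ acts in the standard way. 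Since $\overline{\mathfrak{g}}$ acts trivially, every $\mathfrak{g}$-submodule is automatically an $\mathfrak{sl}_2$-submodule; thus simplicity of $L^{\mathfrak{sl}_2}(\lambda\vert_{\mathfrak{h}})$ as an $\mathfrak{sl}_2$-module forces simplicity of $\iota(L^{\mathfrak{sl}_2}(\lambda\vert_{\mathfrak{h}}))$ as a $\mathfrak{g}$-module.

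Next I would produce a nonzero $\mathfrak{g}$-module map from $\Delta(\lambda)$ to $\iota(L^{\mathfrak{sl}_2}(\lambda\vert_{\mathfrak{h}}))$. Let $v$ denote the canonical highest-weight vector of $L^{\mathfrak{sl}_2}(\lambda\vert_{\mathfrak{h}})$. Then $ev=0$ and $hv=\lambda(h)v$ in the $\mathfrak{sl}_2$-action, while $\overline{e}v=\overline{h}v=0$ in the trivial $\overline{\mathfrak{g}}$-action; in particular, $\overline{h}v=0\cdot v=\lambda(\overline{h})v$ because we are in the case $\lambda(\overline{h})=0$. Thus $v$ is a highest weight vector of weight $\lambda$, and by the universal property of the induced module $\Delta(\lambda)$ there is a unique $\mathfrak{g}$-homomorphism $\Delta(\lambda)\to \iota(L^{\mathfrak{sl}_2}(\lambda\vert_{\mathfrak{h}}))$ sending $v_\lambda$ to $v$. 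This map is surjective because $v$ generates the target over $\mathfrak{sl}_2$.

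Finally, since $\iota(L^{\mathfrak{sl}_2}(\lambda\vert_{\mathfrak{h}}))$ is simple, this surjection identifies it with \emph{some} simple quotient of $\Delta(\lambda)$; by the uniqueness part of \cite[Proposition~7.1.11]{Di} (as used in the construction of $L(\lambda)$), that simple quotient must be $L(\lambda)$. No step here is a real obstacle; the only point worth checking carefully is the verification that $\overline{h}$ acts by the correct eigenvalue $\lambda(\overline{h})$ on $v$, which is exactly where the hypothesis $\lambda(\overline{h})=0$ is used.
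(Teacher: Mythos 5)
Your proposal is correct and follows essentially the same route as the paper: the first case is read off from Proposition~\ref{prop1}, and in the second case the highest weight vector of $\iota(L^{\mathfrak{sl}_2}(\lambda\vert_{\mathfrak{h}}))$ spans a copy of $\mathbb{C}_{\lambda}$ as a $\overline{\mathfrak{b}}$-module, so adjunction yields a nonzero, hence surjective, map from $\Delta(\lambda)$, identifying the target with $L(\lambda)$ by uniqueness of the simple quotient. Your extra remarks (that $\mathfrak{sl}_2\otimes xD$ is an ideal acting trivially, so $\iota$ preserves simplicity, and that $\overline{h}v=0=\lambda(\overline{h})v$ is exactly where the hypothesis enters) are just explicit versions of points the paper leaves implicit.
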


\begin{proof}
If $\lambda(\overline{h})\neq 0$, then the claim is just a part of
Proposition~\ref{prop1}. If $\lambda(\overline{h})=0$, then
the unique up to scalar non-zero vector in 
$\iota(L^{\mathfrak{sl}_2}(\lambda\vert_{\mathfrak{h}}))^{\lambda}$
generates a $\overline{\mathfrak{b}}$-submodule of 
$\iota(L^{\mathfrak{sl}_2}(\lambda\vert_{\mathfrak{h}}))$
isomorphic to $\mathbb{C}_{\lambda}$. By adjunction,
we obtain a non-zero homomorphism from $\Delta(\lambda)$
to $\iota(L^{\mathfrak{sl}_2}(\lambda\vert_{\mathfrak{h}}))$
which must be surjective as the latter module is simple.
Consequently, 
$\iota(L^{\mathfrak{sl}_2}(\lambda\vert_{\mathfrak{h}}))$ 
must be isomorphic to $L(\lambda)$ by the definition of $L(\lambda)$.
\end{proof}

\subsection{(Thick) category $\mathcal{O}$}\label{s2.4}

We define  {\em thick category $\mathcal{O}$}, denoted
$\widetilde{\mathcal{O}}$, as the full subcategory of the category
of all finitely generated $\mathfrak{g}$-modules consisting of
all $\mathfrak{g}$-modules the action of 
$U(\overline{\mathfrak{b}})$ on which is locally finite.
Note that, by definition, all modules in $\mathcal{O}$ are
generalized weight modules.

We define  {\em classical category $\mathcal{O}$}, denoted
$\mathcal{O}$, as the full subcategory of $\widetilde{\mathcal{O}}$
consisting of all weight modules.
Finally, we define  {\em strong category $\mathcal{O}$}, denoted
$\underline{\mathcal{O}}$, as the full subcategory of $\mathcal{O}$
consisting of all strong weight modules.

As $U(\mathfrak{g})$ is noetherian, the categories 
$\widetilde{\mathcal{O}}$, $\mathcal{O}$ and  $\underline{\mathcal{O}}$
are abelian categories closed under taking submodules, quotients 
and finite direct sums. Directly from the definition, it also 
follows that $\widetilde{\mathcal{O}}$ is closed under taking extensions, 
in particular, $\widetilde{\mathcal{O}}$ is  a Serre subcategory of 
$\mathfrak{g}$-mod.

\begin{proposition}\label{prop4}
For each $\lambda\in\overline{\mathfrak{h}}^*$, the module
$\Delta(\lambda)$ belongs to both $\widetilde{\mathcal{O}}$ and
$\mathcal{O}$. However, $\Delta(\lambda)$ does not belong to 
$\underline{\mathcal{O}}$.
\end{proposition}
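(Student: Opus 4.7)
The plan is to verify the three assertions directly from the PBW description of $\Delta(\lambda)$ provided by \eqref{eq5} and \eqref{eq2}. Finite generation is automatic, since $\Delta(\lambda)$ is cyclic with generator $v_\lambda$; the rest amounts to controlling the action of $U(\overline{\mathfrak{b}})$, of $h$ and of $\overline{h}$ on the basis $\{f^i\overline{f}^{n-i}v_\lambda\}$ of $\Delta(\lambda)^{\lambda-n\alpha}$.

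For $\Delta(\lambda)\in\widetilde{\mathcal{O}}$, I would observe that \eqref{eq5} and \eqref{eq2} already give a decomposition into finite-dimensional generalized weight spaces, so the $\overline{\mathfrak{h}}$-action is locally finite. To upgrade this to local finiteness of $U(\overline{\mathfrak{b}})$, I would note that by \eqref{eq1} together with the fact that $v_\lambda$ is killed by $\overline{\mathfrak{n}}_+$, each truncation $\bigoplus_{k=0}^{n}\Delta(\lambda)^{\lambda-k\alpha}$ is $\overline{\mathfrak{n}}_+$-stable, hence a finite-dimensional $U(\overline{\mathfrak{b}})$-submodule; any finite set of vectors sits inside one such truncation. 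For $\Delta(\lambda)\in\mathcal{O}$, I would then check that every PBW basis vector $f^i\overline{f}^{n-i}v_\lambda$ lies in the $h$-eigenspace for the eigenvalue $\lambda(h)-2n$, so $h$ acts as a scalar on each $\Delta(\lambda)^{\lambda-n\alpha}$ and is diagonalizable on the whole module.

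The main (and essentially only) point of substance is $\Delta(\lambda)\notin\underline{\mathcal{O}}$. I would exhibit an explicit Jordan block for $\overline{h}$ on the two-dimensional $\overline{\mathfrak{h}}$-invariant subspace $\Delta(\lambda)^{\lambda-\alpha}$ spanned by $fv_\lambda$ and $\overline{f}v_\lambda$. Using $[\overline{h},f]=-2\overline{f}$ and $[\overline{h},\overline{f}]=0$ (the latter because $x^2=0$ in $D$), a short commutator calculation gives
\begin{displaymath}
\overline{h}\cdot fv_\lambda=\lambda(\overline{h})fv_\lambda-2\overline{f}v_\lambda,\qquad
\overline{h}\cdot\overline{f}v_\lambda=\lambda(\overline{h})\overline{f}v_\lambda,
\end{displaymath}
so the matrix of $\overline{h}$ in the basis $(fv_\lambda,\overline{f}v_\lambda)$ is a non-trivial $2\times 2$ Jordan block, which is never diagonalizable regardless of the value of $\lambda(\overline{h})$. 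This obstructs membership in $\underline{\mathcal{O}}$ and completes the proposition.
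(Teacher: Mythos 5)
Your proposal is correct and follows essentially the same route as the paper: membership in $\widetilde{\mathcal{O}}$ and $\mathcal{O}$ from the finite-dimensional weight spaces \eqref{eq2} and the diagonalizable $h$-action, and non-membership in $\underline{\mathcal{O}}$ via the very same $2\times 2$ matrix $\left(\begin{smallmatrix}\lambda(\overline{h})&0\\-2&\lambda(\overline{h})\end{smallmatrix}\right)$ of $\overline{h}$ on the basis $\{fv_{\lambda},\overline{f}v_{\lambda}\}$ of $\Delta(\lambda)^{\lambda-\alpha}$. You merely spell out the local finiteness of $U(\overline{\mathfrak{b}})$ via the finite truncations in slightly more detail than the paper does.
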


\begin{proof}
That $\Delta(\lambda)\in \widetilde{\mathcal{O}}$ follows from \eqref{eq2}.
That $\Delta(\lambda)\in {\mathcal{O}}$ follows by combining the
fact that $\Delta(\lambda)\in \widetilde{\mathcal{O}}$ and that
the adjoint action of $h$ on $U(\mathfrak{g})$ is diagonalizable
(implying that the action of $h$ on $\Delta(\lambda)$ is diagonalizable).

That $\Delta(\lambda)\not\in \underline{\mathcal{O}}$  follows
from the fact that the matrix of the action of $\overline{h}$ in
the basis $\{fv_{\lambda},\overline{f}v_{\lambda}\}$ of
$\Delta(\lambda)^{\lambda-\alpha}$, where $v_{\lambda}$ is the canonical
generator of $\Delta(\lambda)$, has the form
\begin{displaymath}
\left( 
\begin{array}{cc}
\lambda(\overline{h})&0\\
-2&\lambda(\overline{h})
\end{array}
\right) 
\end{displaymath}
and hence is not diagonalizable.
\end{proof}

\begin{proposition}\label{prop3}
{\hspace{2mm}}

\begin{enumerate}[$($a$)$]
\item\label{prop3.1} The set $\{L(\lambda)\,:\,
\lambda\in\overline{\mathfrak{h}}^*\}$ is a complete and irredundant
list of representatives of isomorphism classes of simple objects
in $\widetilde{\mathcal{O}}$.
\item\label{prop3.2} The set $\{L(\lambda)\,:\,
\lambda\in\overline{\mathfrak{h}}^*\}$ is a complete and irredundant
list of representatives of isomorphism classes of simple objects
in ${\mathcal{O}}$.
\item\label{prop3.3} The set $\{\iota(L^{\mathfrak{sl}_2}(\mu))\,:\,
\mu\in{\mathfrak{h}}^*\}$ is a complete and irredundant
list of representatives of isomorphism classes of simple objects
in $\underline{\mathcal{O}}$.
\end{enumerate}
\end{proposition}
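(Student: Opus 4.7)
The plan is to handle the three parts in turn, with the bulk of the work in part~(\ref{prop3.1}). In each part I need to check that the listed modules are simple objects of the appropriate category (which is mostly a collection of facts already recorded) and then prove completeness, i.e.\ that every simple object is of the listed form. Irredundancy in (\ref{prop3.1}) and (\ref{prop3.2}) is clear: $\lambda$ is recovered from $L(\lambda)$ as the unique maximal element of its support with respect to $\alpha$, using \eqref{eq5} for $L(\lambda)$ viewed as a quotient of $\Delta(\lambda)$.

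For part~(\ref{prop3.1}), let $M$ be a simple object of $\widetilde{\mathcal{O}}$. Since $M$ is finitely generated and the action of $U(\overline{\mathfrak{b}})$ on $M$ is locally finite, there is a nonzero finite-dimensional $\overline{\mathfrak{b}}$-submodule $F\subseteq M$. My goal is to produce a $\overline{\mathfrak{b}}$-singular vector in $F$ of some weight $\lambda\in\overline{\mathfrak{h}}^*$, for this, by Frobenius reciprocity, induces a nonzero map $\Delta(\lambda)\to M$, which is forced to be surjective by simplicity of $M$, yielding $M\cong L(\lambda)$. To produce the singular vector, I would first observe that $\overline{\mathfrak{n}}_+$ acts nilpotently on $F$: the relations $[h,e]=2e$ and $[h,\overline{h}]=0$, $[h,\overline{e}]=2\overline{e}$ imply that both $e$ and $\overline{e}$ shift $h$-eigenvalues up by $2$, so because $F$ is finite-dimensional each of them must be nilpotent; as they commute, Engel's theorem gives $F^{\overline{\mathfrak{n}}_+}\neq 0$. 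The subspace $F^{\overline{\mathfrak{n}}_+}$ is stable under the commuting pair $h,\overline{h}$, and in a finite-dimensional complex vector space a commuting pair of operators admits a simultaneous eigenvector. This eigenvector is the required $\overline{\mathfrak{b}}$-singular vector.

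Part~(\ref{prop3.2}) follows with no new ideas: a simple object of $\mathcal{O}$ is in particular a simple object of $\widetilde{\mathcal{O}}$, so by (\ref{prop3.1}) it has the form $L(\lambda)$; conversely every $L(\lambda)$ lies in $\mathcal{O}$ because $\mathcal{O}$ is closed under quotients and $\Delta(\lambda)\in\mathcal{O}$ by Proposition~\ref{prop4}.

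For part~(\ref{prop3.3}), I would combine part~(\ref{prop3.2}) with Corollary~\ref{cor2}. Every simple object of $\underline{\mathcal{O}}$ is among the $L(\lambda)$, so I only need to decide which $L(\lambda)$ are strong weight modules. If $\lambda(\overline{h})\neq 0$ then $L(\lambda)=\Delta(\lambda)$, and the Jordan block computation in the proof of Proposition~\ref{prop4} shows that $\overline{h}$ is not diagonalizable, so $L(\lambda)\notin\underline{\mathcal{O}}$. If $\lambda(\overline{h})=0$ then $L(\lambda)\cong\iota(L^{\mathfrak{sl}_2}(\lambda\vert_{\mathfrak{h}}))$, on which $\overline{\mathfrak{h}}\otimes x$ acts by zero and $h$ acts diagonalizably, so $L(\lambda)\in\underline{\mathcal{O}}$; and $\mu\mapsto\iota(L^{\mathfrak{sl}_2}(\mu))$ is clearly a bijection. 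The only subtle step in the whole proof is the nilpotency argument for $\overline{\mathfrak{n}}_+$ on $F$, because local finiteness of $U(\overline{\mathfrak{b}})$ does not a priori include the local nilpotency of $\overline{\mathfrak{n}}_+$ usually assumed in the classical BGG setup; however, the weight-shifting relations force it in our situation.
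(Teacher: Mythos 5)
Your proof is correct and follows essentially the same route as the paper: find a $\overline{\mathfrak{b}}$-singular simultaneous eigenvector of $h,\overline{h}$ in a finite-dimensional $U(\overline{\mathfrak{b}})$-submodule, apply adjunction to get a surjection $\Delta(\lambda)\to M$, and settle part~(\ref{prop3.3}) via Corollary~\ref{cor2} and the non-diagonalizability of $\overline{h}$ on $\Delta(\lambda)$ from Proposition~\ref{prop4}. The only difference is that you spell out the weight-shifting/nilpotency argument that the paper leaves implicit when it asserts the existence of the singular vector $w$.
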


\begin{proof}
Let $L$ be a simple module in $\widetilde{\mathcal{O}}$ and
$v$ a non-zero element in $L$. Since the vector space
$U(\overline{\mathfrak{b}})v$ is finite dimensional, it contains
a non-zero element $w$ such that $\overline{\mathfrak{n}}_+w=0$,
$hw=\lambda(h)w$ and $\overline{h}w=\lambda(\overline{h})w$,
for some $\lambda\in \overline{\mathfrak{h}}^*$. Then
$\mathbb{C}w$, is isomorphic, as a $\overline{\mathfrak{b}}$-module,
to $\mathbb{C}_{\lambda}$. By Proposition~\ref{prop4},
we have $\Delta(\lambda)\in \widetilde{\mathcal{O}}$.
By adjunction, we obtain a non-zero
homomorphism from $\Delta(\lambda)$ to $L$. This implies 
$L\cong L(\lambda)$ and proves claim~\eqref{prop3.1}.
Claim~\eqref{prop3.2} is proved similarly.

To prove claim~\eqref{prop3.3}, we can use 
claim~\eqref{prop3.2} and hence just need to check, for which
$\lambda\in \overline{\mathfrak{h}}^*$, the module
$L(\lambda)$ belongs to $\underline{\mathcal{O}}$.
If $\lambda(\overline{h})\neq 0$, then $L(\lambda)=\Delta(\lambda)$
by Corollary~\ref{cor2} and hence $L(\lambda)\not\in \underline{\mathcal{O}}$
by Proposition~\ref{prop4}. If $\lambda(\overline{h})=0$, 
then $L(\lambda)=\iota(L^{\mathfrak{sl}_2}(\lambda\vert_{\mathfrak{h}})$
by Corollary~\ref{cor2} and 
$\iota(L^{\mathfrak{sl}_2}(\lambda\vert_{\mathfrak{h}})\in 
\underline{\mathcal{O}}$ since the action of $\overline{h}$
on $\iota(L^{\mathfrak{sl}_2}(\lambda\vert_{\mathfrak{h}})$
is zero and thus diagonalizable. This completes the proof.
\end{proof}

Proposition~\ref{prop3} has the following consequence.

\begin{corollary}\label{cor5}
The functor $\iota$ induces an equivalence between the
category $\mathcal{O}$ for $\mathfrak{sl}_2$ and the 
category $\underline{\mathcal{O}}$.
\end{corollary}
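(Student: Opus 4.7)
The functor $\iota$ is the restriction of scalars along the surjection $\pi\colon\mathfrak{g}\tto\mathfrak{sl}_2$ whose kernel is the abelian ideal $\overline{\mathfrak{g}}:=\mathfrak{sl}_2\otimes x$. So the plan is: (i) check that $\iota$ lands in $\underline{\mathcal{O}}$; (ii) check that $\iota$ is fully faithful; and (iii) show essential surjectivity by proving that every object of $\underline{\mathcal{O}}$ is annihilated by $\overline{\mathfrak{g}}$ and therefore descends to an $\mathfrak{sl}_2$-module.

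For (i), if $M$ is an object of $\mathcal{O}(\mathfrak{sl}_2)$, then in $\iota(M)$ the element $h$ still acts diagonally and $\overline{h}$ acts as zero, so the action of $\overline{\mathfrak{h}}$ is diagonalizable. The action of $U(\overline{\mathfrak{b}})$ on $\iota(M)$ factors through the finite-extension quotient $U(\mathfrak{b})$ induced by $\pi$, hence is locally finite. Finally, $\iota$ preserves being finitely generated because any set of $U(\mathfrak{sl}_2)$-generators is also a set of $U(\mathfrak{g})$-generators once $\overline{\mathfrak{g}}$ acts trivially. For (ii), fully faithfulness is immediate: a $\mathbb{C}$-linear map between two modules in the image of $\iota$ is $\mathfrak{g}$-linear if and only if it commutes with every element of $\mathfrak{sl}_2$ (since the Takiff part acts as zero on both sides), which is exactly $\mathfrak{sl}_2$-linearity.

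The essential content is (iii). Take $M\in\underline{\mathcal{O}}$ and let $v\in M^{\lambda}$ be a weight vector, with $\lambda(\overline{h})=\nu$. Because $M$ is a \emph{strong} weight module, the weight space $M^{\lambda+\alpha}$ is a true joint eigenspace, and since $\alpha(\overline{h})=0$, every vector of $M^{\lambda+\alpha}$ is annihilated by $\overline{h}-\nu$. In particular, $ev\in M^{\lambda+\alpha}$ satisfies $\overline{h}(ev)=\nu(ev)$. On the other hand, using $[\overline{h},e]=2\overline{e}$, we compute
\begin{displaymath}
\overline{h}(ev)=e\overline{h}v+[\overline{h},e]v=\nu(ev)+2\overline{e}v.
\end{displaymath}
Comparing, $\overline{e}v=0$; since weight vectors span $M$, we get $\overline{e}M=0$. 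The identical argument with $f$ in place of $e$ (using $[\overline{h},f]=-2\overline{f}$) yields $\overline{f}M=0$. Finally, $\overline{h}=[\overline{e},f]$ acts on $M$ as $\overline{e}\circ f-f\circ\overline{e}=0$, so $\overline{h}M=0$ as well. Hence $\overline{\mathfrak{g}}M=0$, and $M$ becomes an $\mathfrak{sl}_2$-module $M'$ via $\pi$. It is then immediate that $M'\in\mathcal{O}(\mathfrak{sl}_2)$ and $\iota(M')\cong M$.

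The only genuinely non-trivial step is the vanishing of $\overline{\mathfrak{g}}$ on $M$, and the key observation is that strength of the weight module forces the $\overline{h}$-eigenvalue to be constant along $\alpha$-translates, which is exactly what kills $\overline{e}$. Once this is in hand, the descent to $\mathfrak{sl}_2$ and the compatibility of categorical data (finite generation, weight decomposition, local finiteness of $U(\mathfrak{b})$) are routine, so no further obstacles arise.
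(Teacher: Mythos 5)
Your proof is correct, and it reaches the key conclusion by a genuinely different route than the paper. Both arguments reduce the corollary to showing that the Takiff part $\mathfrak{sl}_2\otimes x$ annihilates every object of $\underline{\mathcal{O}}$, but the paper gets there by first invoking Proposition~\ref{prop3}\eqref{prop3.3}: every simple object of $\underline{\mathcal{O}}$ lies in the image of $\iota$, hence is killed by $\overline{h}$; combined with the semisimplicity of the $\overline{h}$-action on objects of $\underline{\mathcal{O}}$ this forces $\overline{h}$ to annihilate every object, and then $\overline{e}$ and $\overline{f}$ act by zero because they lie in the ideal generated by $\overline{h}$. You proceed in the opposite order: you kill $\overline{e}$ and $\overline{f}$ first, using the observation that $\alpha(\overline{h})=0$ makes the $\overline{h}$-eigenvalue constant along $\alpha$-strings, so the commutator identities $[\overline{h},e]=2\overline{e}$ and $[\overline{h},f]=-2\overline{f}$ force $\overline{e}v=\overline{f}v=0$ on each weight vector; then $\overline{h}=[\overline{e},f]$ dies for free. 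Your version is more elementary and self-contained --- it avoids the classification of simples and the (slightly delicate) passage from ``$\overline{h}$ kills all simple subquotients'' to ``$\overline{h}$ kills the module'' --- at the cost of a short explicit computation; the paper's version is shorter given that Proposition~\ref{prop3} is already in hand. The remaining bookkeeping in your write-up (full faithfulness, that $\iota$ lands in $\underline{\mathcal{O}}$, and that the descended module is finitely generated over $U(\mathfrak{sl}_2)$ and hence lies in $\mathcal{O}(\mathfrak{sl}_2)$) is handled adequately.
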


\begin{proof}
By construction, the functor $\iota$ is full and faithful and maps
the category $\mathcal{O}$ for $\mathfrak{sl}_2$ to the 
category $\underline{\mathcal{O}}$. Hence, what we need to prove
is that this  restriction of $\iota$ is dense. By 
Proposition~\ref{prop3}\eqref{prop3.3}, $\iota$ hits all 
isomorphism classes of simple objects in $\underline{\mathcal{O}}$.
In particular, $\overline{h}$ annihilates all simple objects in
$\underline{\mathcal{O}}$. Since, by the definition of 
$\underline{\mathcal{O}}$, the action of $\overline{h}$ on any object 
in $\underline{\mathcal{O}}$ is semi-simple, it follows that 
$\overline{h}$ annihilates all objects in $\underline{\mathcal{O}}$.

Since the ideal of $\mathfrak{g}$ generated by $\overline{h}$ contains
both $\overline{e}$ and $\overline{f}$, it follows that the latter two
elements annihilate all object in $\underline{\mathcal{O}}$. This
yields that every object in $\underline{\mathcal{O}}$ is, in fact, 
isomorphic to an object in the image of $\iota$. The claim follows.
\end{proof}

Due to Corollary~\ref{cor5}, the category $\underline{\mathcal{O}}$
is fairly well-understood, see e.g. \cite{Ma} for a very detailed description.
Therefore, in what follows, we focus on studying the categories
$\widetilde{\mathcal{O}}$ and $\mathcal{O}$.

\section{Extension fullness of $\widetilde{\mathcal{O}}$ in $\mathfrak{g}$-Mod}\label{s3}

The inclusion functor $\Phi:\widetilde{\mathcal{O}}\hookrightarrow \mathfrak{g}$-Mod
is exact and hence induces, for each $M,N\in \widetilde{\mathcal{O}}$ and $i\geq 0$,
homomorphisms
\begin{displaymath}
\varphi_{M,N}^{(i)}:\mathrm{Ext}^i_{\widetilde{\mathcal{O}}}(M,N)\to
\mathrm{Ext}^i_{\mathfrak{g}\text{-}\mathrm{Mod}}(M,N)
\end{displaymath}
of abelian groups. As $\widetilde{\mathcal{O}}$ is a full subcategory of $\mathfrak{g}$-Mod,
all $\varphi_{M,N}^{(0)}$ are isomorphisms. As $\widetilde{\mathcal{O}}$ is a Serre 
subcategory of $\mathfrak{g}$-Mod, all $\varphi_{M,N}^{(1)}$ are isomorphisms. The main
result of this section is the following statement.

\begin{theorem}\label{thm6}
The category  $\widetilde{\mathcal{O}}$ is {\em extension full} in $\mathfrak{g}$-Mod
in the sense that all $\varphi_{M,N}^{(i)}$ are isomorphisms.
\end{theorem}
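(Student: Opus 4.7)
My plan is to prove the isomorphism $\varphi^{(i)}_{M,N}$ by induction on $i$. The base cases $i=0,1$ are already established in the paragraph preceding the theorem. For the inductive step I use dimension shifting: given $M\in\widetilde{\mathcal{O}}$, I construct a short exact sequence
\[
0\to K\to P\to M\to 0
\]
in $\widetilde{\mathcal{O}}$ with $P$ projective in $\widetilde{\mathcal{O}}$ and also satisfying $\mathrm{Ext}^{j}_{\mathfrak{g}\text{-}\mathrm{Mod}}(P,N)=0$ for all $j\ge 1$ and $N\in\widetilde{\mathcal{O}}$. Given such a $P$, the long exact sequences of $\mathrm{Ext}$ in $\widetilde{\mathcal{O}}$ and in $\mathfrak{g}$-Mod both collapse to isomorphisms $\mathrm{Ext}^{i}(M,N)\cong\mathrm{Ext}^{i-1}(K,N)$, and the inductive hypothesis on $K\in\widetilde{\mathcal{O}}$ closes the step.

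The natural candidates for $P$ are induced modules $P=U(\mathfrak{g})\otimes_{U(\overline{\mathfrak{b}})}F$ with $F$ a finite-dimensional $\overline{\mathfrak{b}}$-module. Since $M$ is finitely generated and $\overline{\mathfrak{b}}$ acts locally finitely on $M$, the $\overline{\mathfrak{b}}$-span of a finite set of $\mathfrak{g}$-generators of $M$ is a finite-dimensional $\overline{\mathfrak{b}}$-submodule $F\subseteq M$ generating $M$ as a $\mathfrak{g}$-module, and adjunction produces the desired surjection $U(\mathfrak{g})\otimes_{U(\overline{\mathfrak{b}})}F\twoheadrightarrow M$. By the PBW theorem $U(\mathfrak{g})$ is free over $U(\overline{\mathfrak{b}})$, so induction is exact and derived Frobenius reciprocity supplies the identification
\[
\mathrm{Ext}^{j}_{\mathfrak{g}\text{-}\mathrm{Mod}}\bigl(U(\mathfrak{g})\otimes_{U(\overline{\mathfrak{b}})}F,\,N\bigr)\;\cong\;\mathrm{Ext}^{j}_{\overline{\mathfrak{b}}\text{-}\mathrm{Mod}}\bigl(F,\,N\vert_{\overline{\mathfrak{b}}}\bigr).
\]
Since $\overline{\mathfrak{b}}$ is a four-dimensional solvable Lie algebra and $N\vert_{\overline{\mathfrak{b}}}$ is locally finite, these $\mathrm{Ext}$ groups are accessible via the Chevalley--Eilenberg complex. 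The same adjunction, read inside $\widetilde{\mathcal{O}}$, translates projectivity of $U(\mathfrak{g})\otimes_{U(\overline{\mathfrak{b}})}F$ in $\widetilde{\mathcal{O}}$ into projectivity of $F$ in the category of locally finite $\overline{\mathfrak{b}}$-modules.

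The main obstacle is therefore to find, for each $M$, a finite-dimensional $\overline{\mathfrak{b}}$-module $F$ that simultaneously surjects onto $M$ after induction, is projective in the locally finite $\overline{\mathfrak{b}}$-module category, and has vanishing higher $\mathrm{Ext}$ against every locally finite $\overline{\mathfrak{b}}$-module. Since finite-dimensional modules are never projective over $U(\overline{\mathfrak{b}})$ in the unrestricted sense, one must genuinely work within the locally finite subcategory; I expect the correct $F$ to be built from quotients $U(\overline{\mathfrak{b}})/J$ for suitable finite-codimension ideals $J$ determined by the generalized $\overline{\mathfrak{h}}$-weights appearing in $M$, in analogy with the thick projectives in thick BGG category $\mathcal{O}$. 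The required acyclicity should then be verified by direct analysis of the Chevalley--Eilenberg complex for $\overline{\mathfrak{b}}$, exploiting both its solvability and the fact that $\overline{\mathfrak{n}}_{+}$ acts nilpotently on any finite-dimensional $\overline{\mathfrak{b}}$-module. This local homological analysis over a small, solvable Lie algebra is the technical heart of the argument.
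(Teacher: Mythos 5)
Your overall strategy (reduce to induced modules via Frobenius reciprocity and dimension-shift) is the right family of ideas, but the specific form of your inductive step cannot be carried out, and you have explicitly deferred exactly the step that fails. You require a surjection $P\twoheadrightarrow M$ with $P=U(\mathfrak{g})\otimes_{U(\overline{\mathfrak{b}})}F$, $F$ finite dimensional, such that $P$ is projective in $\widetilde{\mathcal{O}}$ and $\mathrm{Ext}^j_{\mathfrak{g}\text{-}\mathrm{Mod}}(P,N)=0$ for all $j\geq 1$. Neither condition can be met. By Shapiro's lemma the second condition amounts to $H^j(\overline{\mathfrak{b}},F^*\otimes N\vert_{\overline{\mathfrak{b}}})=0$ for all $j\geq 1$ and all $N\in\widetilde{\mathcal{O}}$; since $\overline{\mathfrak{b}}$ is four-dimensional, already the top cohomology $H^4$ is a coinvariants space which is nonzero for suitable finite-dimensional $N$ (finite-dimensional $\mathfrak{g}$-modules lie in $\widetilde{\mathcal{O}}$), so no finite-dimensional $F$ is acyclic in the unrestricted sense. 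The first condition also fails: there are no nonzero finite-dimensional projectives in the category of locally finite $\overline{\mathfrak{b}}$-modules, for the same reason that $\mathbb{C}[x]/(x^n)$ is not projective among locally finite $\mathbb{C}[x]$-modules supported at $0$ --- every finite-dimensional module admits a nonsplit self-extension coming from a deeper generalized weight space. So the object your induction needs does not exist, and the ``local homological analysis'' you postpone is not a technicality but an obstruction.

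The repair, which is what the paper does (following \cite{CM1}), is to weaken both requirements. One does not need $P$ projective or acyclic; by \cite[Proposition~1]{CM1} it suffices to exhibit a class $\mathcal{B}_0$ of objects such that every object is a quotient of a member of $\mathcal{B}_0$ and such that the comparison maps $\mathrm{Ext}^i_{\mathcal{B}}(X,N)\to\mathrm{Ext}^i_{\mathcal{A}}(X,N)$ are \emph{isomorphisms} (not zero) for $X\in\mathcal{B}_0$; a refined dimension-shifting then closes the induction even though both sides are nonvanishing. The paper takes $\mathcal{B}_0$ to consist of all $M(V)=\mathrm{Ind}_{U(\overline{\mathfrak{b}})}^{U(\mathfrak{g})}(V)$ with $V$ an arbitrary (possibly infinite-dimensional) locally finite $U(\overline{\mathfrak{b}})$-module, the isomorphism on these objects being the computation of \cite[Lemma~3]{CM1}; this forces a preliminary passage to the category $\hat{\tilde{\mathcal{O}}}$ obtained by dropping finite generation, after which $\widetilde{\mathcal{O}}\subset\hat{\tilde{\mathcal{O}}}$ is handled separately by \cite[Proposition~3]{CM1}. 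You should restructure your argument along these lines rather than searching for acyclic projectives that do not exist.
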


Theorem~\ref{thm6} is a generalization of Theorem~16 in \cite{CM2} to our setup.
We refer the reader to \cite{CM1} and \cite{CM2} for more details on extension full subcategories.

\begin{proof}
We follow the proof of Theorem~2 in \cite{CM1}. Denote by $\hat{\tilde{\mathcal{O}}}$
the full subcategory of $\mathfrak{g}$-Mod consisting of all modules, the action of
$U(\overline{\mathfrak{b}})$ on which is locally finite. The difference between 
$\hat{\tilde{\mathcal{O}}}$ and $\tilde{\mathcal{O}}$ is that, in the case of 
$\hat{\tilde{\mathcal{O}}}$, we drop the condition on modules to be finitely generated.

First we note that $\tilde{\mathcal{O}}$ is extension full in $\hat{\tilde{\mathcal{O}}}$.
Indeed, if $M\in \hat{\tilde{\mathcal{O}}}$, $N\in \tilde{\mathcal{O}}$ and
$\alpha:M\to N$ is a surjective homomorphism, we can use that $N$ is finitely generated
to claim that $N$ is in the image of a finitely generated submodule of $M$. 
Therefore the fact that $\tilde{\mathcal{O}}$ is extension full in 
$\hat{\tilde{\mathcal{O}}}$ follows from Proposition~3 in \cite{CM1}
(applied in the situation $\mathcal{B}=\tilde{\mathcal{O}}$ and
$\mathcal{A}=\hat{\tilde{\mathcal{O}}}$).

To complete the proof of the theorem, it remains to prove that 
$\hat{\tilde{\mathcal{O}}}$ is extension full in $\mathfrak{g}$-Mod.
For a locally finite dimensional $U(\overline{\mathfrak{b}})$-module $V$,
denote by $M(V)$ the induced module 
$\mathrm{Ind}_{U(\overline{\mathfrak{b}})}^{U(\mathfrak{g})}(V)$.
Note that, by Theorem~6 in \cite{BM}, the action of $U(\overline{\mathfrak{b}})$
on $M(V)$ is locally finite. The same computation as in the proof of
Lemma~3 in \cite{CM1} shows that, for any $V$ as above, any 
$N\in \hat{\tilde{\mathcal{O}}}$ and any $i\geq 0$, the natural map
\begin{displaymath}
\mathrm{Ext}^i_{\hat{\widetilde{\mathcal{O}}}}(M(V),N)\to
\mathrm{Ext}^i_{\mathfrak{g}\text{-}\mathrm{Mod}}(M(V),N)
\end{displaymath}
is an isomorphism. Therefore the extension fullness of 
$\hat{\tilde{\mathcal{O}}}$ in $\mathfrak{g}$-Mod
follows from Proposition~1 in \cite{CM1} (applied in the situation 
$\mathcal{A}=\mathfrak{g}$-Mod, $\mathcal{B}=\hat{\tilde{\mathcal{O}}}$ and
$\mathcal{B}_0$ consisting of modules of the form $M(V)$, for $V$ as above). 
This completes the proof.
\end{proof}

\section{Description of blocks}\label{s4}

\subsection{Characters and composition multiplicities}\label{s4.0}

\begin{lemma}\label{lem11}
Let  $\mathcal{X}\in\{\mathcal{O},\widetilde{\mathcal{O}}\}$
and $M\in \mathcal{X}$. There exist $k\in \mathbb{Z}_{>0}$
and $\lambda_1,\lambda_2,\dots,\lambda_k\in \overline{\mathfrak{h}}^*$
such that
\begin{equation}\label{eq7}
\mathrm{supp}(M)\subset\bigcup_{i=1}^k\{\lambda_i-\mathbb{Z}_{\geq 0}\alpha\},  
\end{equation}
moreover, for each $\mu\in \mathrm{supp}(M)$, the space
$M^{\mu}$ is finite dimensional.
\end{lemma}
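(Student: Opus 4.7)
The plan is to exploit the two defining features of objects in $\mathcal{X}$: finite generation as a $\mathfrak{g}$-module together with local finiteness of the $U(\overline{\mathfrak{b}})$-action. First I choose a finite generating set $v_1,\dots,v_m$ of $M$ and form $V := \sum_{j=1}^m U(\overline{\mathfrak{b}}) v_j$. Local finiteness forces each $U(\overline{\mathfrak{b}})v_j$ to be finite dimensional, so $V$ is a finite-dimensional $\overline{\mathfrak{b}}$-submodule of $M$ that still generates $M$ as a $\mathfrak{g}$-module. Since $V$ is stable under $U(\overline{\mathfrak{h}})$, its generalized weight space decomposition $V = \bigoplus_{\lambda} V^{\lambda}$ involves only finitely many weights, which I label $\lambda_1, \ldots, \lambda_k$.

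Next, PBW applied to the triangular decomposition $\mathfrak{g} = \overline{\mathfrak{n}}_- \oplus \overline{\mathfrak{b}}$ gives $U(\mathfrak{g}) = U(\overline{\mathfrak{n}}_-) U(\overline{\mathfrak{b}})$, so $M = U(\overline{\mathfrak{n}}_-) V$. The PBW monomials in the generators $f, \overline{f}$ of $\overline{\mathfrak{n}}_-$ have the form $f^i \overline{f}^j$, and by \eqref{eq1} each such monomial sends $V^{\lambda_s}$ into $M^{\lambda_s - (i+j)\alpha}$. Consequently
\begin{displaymath}
M \;=\; \sum_{s=1}^k \sum_{i,j \geq 0} f^i \overline{f}^j\, V^{\lambda_s},
\end{displaymath}
which immediately yields the support inclusion \eqref{eq7}.

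For the finiteness of each generalized weight space $M^{\mu}$, I fix $\mu \in \mathrm{supp}(M)$ and observe that only finitely many pairs $(s,n) \in \{1,\ldots,k\} \times \mathbb{Z}_{\geq 0}$ satisfy $\lambda_s - n\alpha = \mu$. For each such pair, the contribution to $M^{\mu}$ is spanned by the $n+1$ vectors $f^i \overline{f}^{\,n-i} w$ for $i = 0, \ldots, n$, as $w$ ranges over the finite-dimensional space $V^{\lambda_s}$. A finite sum of finite-dimensional spaces is finite dimensional, so $\dim M^{\mu} < \infty$.

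I do not anticipate a serious obstacle: the argument is the standard PBW support bound for induced modules, adapted to the situation where $V$ is a finite-dimensional $\overline{\mathfrak{b}}$-module rather than a one-dimensional highest weight line. The only point that deserves care is verifying that local finiteness of the $U(\overline{\mathfrak{b}})$-action forces $V$ itself to be finite dimensional; this is immediate because $V$ is a finite sum of cyclic $U(\overline{\mathfrak{b}})$-modules on which the action is locally finite by hypothesis.
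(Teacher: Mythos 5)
Your proof is correct and follows essentially the same route as the paper: extract a finite-dimensional $\overline{\mathfrak{b}}$-stable generating subspace from finite generation plus local finiteness, take its finitely many generalized $\overline{\mathfrak{h}}$-weights, and apply the PBW factorization $U(\mathfrak{g})=U(\overline{\mathfrak{n}}_-)U(\overline{\mathfrak{b}})$ together with the weight shifts of \eqref{eq1}. The only cosmetic difference is that the paper first reduces to a cyclic module generated by a single generalized weight vector, whereas you handle a finite generating set at once; both yield the same support bound and the same finiteness of each $M^{\mu}$.
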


\begin{proof}
If two modules $M_1$ and $M_2$ have the properties described in the
formulation of the lemma, then any extension of $M_1$ and $M_2$
also has similar properties. By definition, $M$ is finitely generated,
and hence, taking the first sentence into account, without loss
of generality we may assume that $M$ is generated by one element
$v\in M^{\nu}$, for some $\nu\in \overline{\mathfrak{h}}^*$.

The vector space $U(\overline{\mathfrak{b}})v$ is finite dimensional
and $\overline{\mathfrak{h}}$-stable. Hence the
$\overline{\mathfrak{h}}$-module $U(\overline{\mathfrak{b}})v$
has finite support, say $\{\lambda_1,\lambda_2,\dots,\lambda_k\}$.
By the PBW Theorem, we have the decomposition  $U(\mathfrak{g})=
U(\overline{\mathfrak{n}}_-)U(\overline{\mathfrak{b}})$.
Hence $M=U(\overline{\mathfrak{n}}_-)\big(U(\overline{\mathfrak{b}})v\big)$,
implying Formula~\eqref{eq7}. Moreover, since, considered as
an adjoint $\overline{\mathfrak{h}}$-module, all generalized weight
spaces of $U(\overline{\mathfrak{n}}_-)$ are finite dimensional,
it follows that all $M^{\mu}$ are finite dimensional. This completes the proof.
\end{proof}

For a finite subset $\boldsymbol{\mu}\subset \overline{\mathfrak{h}}^*$,
set
\begin{displaymath}
\overline{\boldsymbol{\mu}}=\bigcup_{\mu\in \boldsymbol{\mu}}
\{\mu-\mathbb{Z}_{\geq 0}\alpha\}.
\end{displaymath}
We write $\boldsymbol{\mu}\preceq \boldsymbol{\nu}$ provided that
$\overline{\boldsymbol{\mu}}\subset \overline{\boldsymbol{\nu}}$.

Consider the set $\mathbf{F}$ of all functions 
$\chi:\overline{\mathfrak{h}}^*\to\mathbb{Z}_{\geq 0}$ having the property 
that the {\em support} 
$\{\lambda\in \overline{\mathfrak{h}}^*\,:\,\chi(\lambda)\neq 0\}$
of $\chi$ belongs to $\overline{\boldsymbol{\mu}}$, for some $\boldsymbol{\mu}$
as above. The set $\mathbf{F}$ has the natural structure of an additive monoid with
respect to the pointwise addition of functions. The neutral element of this
monoid is the zero function.

Let $\mathcal{X}\in\{\mathcal{O},\widetilde{\mathcal{O}}\}$. Given
$M\in \mathcal{X}$, we define the {\em character} $\mathbf{ch}(M)$ as
the function from $\overline{\mathfrak{h}}^*$ to $\mathbb{Z}_{\geq 0}$
sending $\lambda$ to $\dim(M^{\lambda})$. By Lemma~\ref{lem11},
we have $\mathbf{ch}(M)\in \mathbf{F}$. Clearly, characters are
additive on short exact sequences, that is, for any
short exact sequence $0\to K\to M\to N\to O$ in $\mathcal{X}$, we have
$\mathbf{ch}(M)=\mathbf{ch}(K)+\mathbf{ch}(N)$.

\begin{proposition}\label{prop12}
Let $\mathcal{X}\in\{\mathcal{O},\widetilde{\mathcal{O}}\}$. 
\begin{enumerate}[$($a$)$]
\item\label{prop12.2} For any $M\in\mathcal{X}$, 
there are uniquely determined $\mathbf{k}_{\lambda}(M)\in \mathbb{Z}_{\geq 0}$,
where $\lambda\in \overline{\mathfrak{h}}^*$,
such that
\begin{displaymath}
\mathbf{ch}(M)=\sum_{\lambda\in\overline{\mathfrak{h}}^*}\mathbf{k}_{\lambda}(M)
\mathbf{ch}(L(\lambda)).
\end{displaymath}
\item\label{prop12.1} For every $\lambda\in \overline{\mathfrak{h}}^*$,
the function 
$\mathbf{k}_{\lambda}:\mathrm{Ob}(\mathcal{X})\to\mathbb{Z}_{\geq 0}$
has the following properties:
\begin{enumerate}[$($i$)$]
\item\label{prop12.1.1} $\mathbf{k}_{\lambda}(L(\lambda))=1$;
\item\label{prop12.1.2} $\mathbf{k}_{\lambda}(L(\mu))=0$, if $\mu\neq\lambda$;
\item\label{prop12.1.3} $\mathbf{k}_{\lambda}(M)=0$, if $\lambda\not\in\mathrm{supp}(M)$;
\item\label{prop12.1.4} $\mathbf{k}_{\lambda}(M)$ is additive on short exact sequences.
\end{enumerate}
\end{enumerate}
\end{proposition}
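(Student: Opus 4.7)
My plan is to prove part~(a) by first establishing that every $M\in\mathcal{X}$ has finite length, so $\mathbf{k}_{\lambda}(M)$ can be defined as the Jordan--H\"older multiplicity of~$L(\lambda)$ in~$M$; part~(b) will then follow immediately.

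For finite length: being finitely generated with $U(\overline{\mathfrak{b}})$-locally finite action, $M$ admits a finite-dimensional $\overline{\mathfrak{b}}$-stable generating subspace $V\subset M$, so $M$ is a quotient of $\mathrm{Ind}_{U(\overline{\mathfrak{b}})}^{U(\mathfrak{g})}(V)$. A $\overline{\mathfrak{b}}$-composition series of~$V$ lifts to a finite Verma filtration of this induced module with subquotients $\Delta(\mu_{1}),\dots,\Delta(\mu_{r})$. Generic Vermas are simple by Proposition~\ref{prop1}; the non-generic ones (those with $\mu(\overline{h})=0$) also have finite length, a fact to be verified in Section~\ref{s6}. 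Hence $\mathrm{Ind}_{U(\overline{\mathfrak{b}})}^{U(\mathfrak{g})}(V)$, and therefore~$M$, has finite length.

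Fix a Jordan--H\"older series $0=M_{0}\subset\dots\subset M_{n}=M$ with $M_{i}/M_{i-1}\cong L(\mu_{i})$, and set $\mathbf{k}_{\lambda}(M):=\#\{i:\mu_{i}=\lambda\}\in\mathbb{Z}_{\geq 0}$. Additivity of characters on short exact sequences then gives $\mathbf{ch}(M)=\sum_{\lambda}\mathbf{k}_{\lambda}(M)\mathbf{ch}(L(\lambda))$, establishing existence with non-negative integer coefficients. Uniqueness is forced by the \emph{triangularity} of simple characters (Corollary~\ref{cor2} together with \eqref{eq5}--\eqref{eq2}): $\dim L(\lambda)^{\lambda}=1$ and $L(\lambda)^{\mu}=0$ unless $\lambda-\mu\in\mathbb{Z}_{\geq 0}\alpha$. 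Given any expansion $\mathbf{ch}(M)=\sum c_{\mu}\mathbf{ch}(L(\mu))$, evaluating at a weight~$\nu$ maximal in $\{\mu:c_{\mu}\neq 0\}$ (a set with only finitely many maxima by Lemma~\ref{lem11}) forces $c_{\nu}=\dim M^{\nu}$; descending induction along the partial order defined by $\lambda\leq\mu$ if and only if $\mu-\lambda\in\mathbb{Z}_{\geq 0}\alpha$ then pins down every $c_{\mu}$.

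Part~(b) falls out of~(a): items~(i) and~(ii) are the uniqueness statement applied to $M=L(\mu)$; item~(iii) holds because every composition factor $L(\mu_{i})$ of~$M$ satisfies $\mu_{i}\in\mathrm{supp}(M)$; and item~(iv) follows from character additivity combined with the uniqueness of the expansion. The principal obstacle is the finite-length property of non-generic Verma modules, which is what forces the forward reference to Section~\ref{s6}; a self-contained alternative would instead define $\mathbf{k}_{\lambda}(M)$ directly via the top-down recursion $\dim M^{\lambda}=\sum_{\mu\geq\lambda}\mathbf{k}_{\mu}(M)\dim L(\mu)^{\lambda}$ (well-founded since Lemma~\ref{lem11} forces only finitely many $\mu>\lambda$ to appear in $\mathrm{supp}(M)$) and then verify non-negativity by passing to a finite-length subquotient capturing the weight spaces above~$\lambda$.
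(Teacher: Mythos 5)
Your main line of argument has a fatal flaw: the finite-length claim is false. You assert that the non-generic Verma modules (those with $\mu(\overline{h})=0$) ``also have finite length, a fact to be verified in Section~\ref{s6}.'' Section~\ref{s6} establishes the opposite: by Proposition~\ref{prop51} and Corollary~\ref{cor52}, for $\lambda(\overline{h})=0$ and $\lambda(h)\notin\mathbb{Z}_{\geq 0}$ the module $\Delta(\lambda)$ carries an infinite strictly decreasing filtration $\dots\subset\Delta(\lambda-2\alpha)\subset\Delta(\lambda-\alpha)\subset\Delta(\lambda)$ with simple subquotients $L(\lambda-i\alpha)$ for all $i\geq 0$, so it has infinitely many composition factors (the same is true in the integral case, see Corollary~\ref{cor59}). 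Since these Verma modules lie in both $\mathcal{O}$ and $\widetilde{\mathcal{O}}$ by Proposition~\ref{prop4}, objects of $\mathcal{X}$ need not have finite length --- which is exactly why the paper introduces the proper subcategory $\mathcal{X}_{\mathrm{fl}}$ later on, and why $\mathbf{k}_{\lambda}(M)$ cannot be defined as a Jordan--H{\"o}lder multiplicity. Your entire existence argument via a composition series therefore collapses.

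Your uniqueness argument (triangularity of simple characters, induction from maximal weights downwards) is sound and is essentially the paper's. Your closing ``self-contained alternative'' --- defining $\mathbf{k}_{\lambda}(M)$ by the top-down recursion $\dim M^{\lambda}=\sum_{\mu\geq\lambda}\mathbf{k}_{\mu}(M)\dim L(\mu)^{\lambda}$ --- is in fact the only viable route and is close in spirit to what the paper does, but you leave the key point unproved: this recursion a priori produces integers, not obviously non-negative ones, and ``passing to a finite-length subquotient capturing the weight spaces above $\lambda$'' is not justified (one would need to produce such a subquotient, which is not immediate). The paper instead runs a module-theoretic recursion: pick a maximal weight $\lambda$ of $\mathrm{supp}(M)$, take a highest-weight vector $v\in M^{\lambda}$, let $K=U(\mathfrak{g})v$ (a quotient of $\Delta(\lambda)$), let $K'$ be the image of the maximal submodule of $\Delta(\lambda)$ and $N=M/K$, and set $\mathbf{k}_{\mu}(M)=\mathbf{k}_{\mu}(K')+\mathbf{k}_{\mu}(N)+\delta_{\mu,\lambda}$; this terminates because $\dim N^{\lambda}<\dim M^{\lambda}$ and $\lambda\notin\mathrm{supp}(K')$, and it yields non-negativity by construction. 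You would need to supply an argument of this kind to repair the existence part.
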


\begin{proof}
Clearly, $\mathbf{k}_{\lambda}(M)=0$ if $\lambda\not\in\mathrm{supp}(M)$,
and hence the sum in \eqref{prop12.2} can be taken over $\mathrm{supp}(M)$
instead of the whole $ \overline{\mathfrak{h}}^*$.

Assume first that, for $M\in\mathcal{X}$, we have
\begin{displaymath}
\mathbf{ch}(M)=\sum_{\lambda\in\mathrm{supp}(M)}a_{\lambda}
\mathbf{ch}(L(\lambda))=
\sum_{\lambda\in\mathrm{supp}(M)}b_{\lambda}
\mathbf{ch}(L(\lambda)),
\end{displaymath}
where all $a_{\lambda}$ and $b_{\lambda}$ are in $\mathbb{Z}_{\geq 0}$.
Assume that there is some $\lambda$ such that $a_{\lambda}\neq b_{\lambda}$.
Let $X:=\{\lambda\,:\, a_{\lambda}>b_{\lambda}\}$ and
$Y:=\mathrm{supp}(M)\setminus X$. Then we have
\begin{displaymath}
\chi:=\sum_{\lambda\in X}(a_{\lambda}-b_{\lambda})
\mathbf{ch}(L(\lambda))=
\sum_{\mu\in Y}(b_{\mu}-a_{\mu})
\mathbf{ch}(L(\mu)).
\end{displaymath}
By our assumptions, $\chi\in\mathbf{F}$ is non-zero. Then there exists
$\nu\in \overline{\mathfrak{h}}^*$ such that $\chi(\nu)\neq 0$ but
$\chi(\nu+m\alpha)=0$, for all $m\in\mathbb{Z}_{>0}$. If $\nu\in X$,
then  $\nu\not\in Y$ and from the property 
$\chi(\nu+m\alpha)=0$, for all $m\in\mathbb{Z}_{>0}$, we see that 
$\chi(\nu)\neq 0$ is not possible if we compute $\chi$ using the
second expression. Similarly, if $\nu\in Y$,
then  $\nu\not\in X$ and we get that $\chi(\nu)\neq 0$ is not 
possible if we compute $\chi$ using the first expression. 
The obtained contradiction shows that, if a decomposition of the form
as in \eqref{prop12.2} exists, then it is unique. 

Let us now prove existence of \eqref{prop12.2}.
If $M=0$, we set $\mathbf{k}_{\lambda}(M)=0$, for all $\lambda$.
Let $M\in\mathcal{X}$ be non-zero and $\lambda\in\mathrm{supp}(M)$
be such that $\lambda+m\alpha\not\in\mathrm{supp}(M)$, for all $m\in\mathbb{Z}_{>0}$.
Let $v\in M^{\lambda}$ be a non-zero element which is an eigenvector
for both $h$ and $\overline{h}$ and set $K:=U(\mathfrak{g})v\subset M$
and $N:=M/K$. By adjunction, there is a non-zero epimorphism from
$\Delta(\lambda)$ to $K$ sending the canonical generator of 
$\Delta(\lambda)$ to $v$. Let $K'$ denote the image, under this epimorphism,
of the unique maximal submodule of $\Delta(\lambda)$. By construction, we have two
short exact sequences:
\begin{displaymath}
0\to K'\to K\to L(\lambda)\to 0\qquad\text{ and }\qquad
0\to K\to M\to N\to 0.
\end{displaymath}

For each $\mu\in\mathrm{supp}(M)$, define
\begin{equation}\label{eq8}
\mathbf{k}_{\mu}(M)=
\begin{cases}
\mathbf{k}_{\mu}(K')+\mathbf{k}_{\mu}(N),& \text{ if }\mu\neq\lambda;\\
\mathbf{k}_{\mu}(K')+\mathbf{k}_{\mu}(N)+1,& \text{ if }\mu=\lambda.
\end{cases}
\end{equation}
Note that $\mathrm{supp}(N)\subset \mathrm{supp}(M)$ and
$\dim(N^{\lambda})<\dim(M^{\lambda})$, moreover, we also have 
$\mathrm{supp}(K')\subset \mathrm{supp}(M)$ and
$\lambda\not\in \mathrm{supp}(K')$. Therefore, thanks to Lemma~\ref{lem11},
Formula~\eqref{eq8} gives an iterative procedure which, after a finite number of 
iterations, completely determines $\mathbf{k}_{\mu}(M)$ such that 
\eqref{prop12.2} holds by construction.

It remains to check that $\mathbf{k}_{\mu}(M)$ defined above have all the 
properties listed in \eqref{prop12.1}. 
Properties~\eqref{prop12.1.1}-\eqref{prop12.1.3} follow directly from 
the definition in the previous paragraph. Property~\eqref{prop12.1.1} follows from the 
equality in \eqref{prop12.2} and the fact that characters are additive
on short exact sequences.
\end{proof}

The number $\mathbf{k}_{\mu}(M)$ will be called the {\em composition
multiplicity} of $L(\mu)$ in $M$.

\subsection{Some first extensions between simple objects}\label{s4.1}

\begin{proposition}\label{prop7}
Let $\lambda,\mu\in\overline{\mathfrak{h}}^*$ be such that $\lambda \neq \mu$
and $\lambda(\overline{h})\neq 0$. Then, for any $\mathcal{X}\in\{\mathcal{O},\widetilde{\mathcal{O}}\}$, we have
\begin{displaymath}
\mathrm{Ext}^1_{\mathcal{X}} (L(\lambda),L(\mu))=
\mathrm{Ext}^1_{\mathcal{X}} (L(\mu),L(\lambda))=0 .
\end{displaymath}
\end{proposition}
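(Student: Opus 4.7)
The idea is to show every extension splits by producing a $\mathfrak{g}$-module direct-sum decomposition of the middle term, using support-based separation when $\lambda$ and $\mu$ lie in distinct $\mathbb{Z}\alpha$-cosets, and Casimir-based separation otherwise. Since $\mathcal{O}$ is full in $\widetilde{\mathcal{O}}$, any splitting of a sequence of $\mathcal{O}$-objects inside $\widetilde{\mathcal{O}}$ is automatically a splitting in $\mathcal{O}$, so it suffices to argue for $\mathcal{X}=\widetilde{\mathcal{O}}$.

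Given a short exact sequence $0\to L(\mu)\to E\to L(\lambda)\to 0$, I would distinguish two cases. \textbf{Case 1:} $\lambda-\mu\notin\mathbb{Z}\alpha$. Then $\mathrm{supp}(L(\lambda))$ and $\mathrm{supp}(L(\mu))$ lie in distinct cosets of $\mathbb{Z}\alpha$ in $\overline{\mathfrak{h}}^*$, and $\mathrm{supp}(E)\subset \mathrm{supp}(L(\lambda))\cup \mathrm{supp}(L(\mu))$. Grouping the generalized weight spaces of $E$ by $\mathbb{Z}\alpha$-coset yields a vector-space decomposition $E=E_\mu\oplus E_\lambda$; by \eqref{eq1} the root vectors shift weights only by $\pm\alpha$, so both summands are in fact $\mathfrak{g}$-submodules. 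Tracking supports gives $E_\mu\cong L(\mu)$ and $E_\lambda\cong L(\lambda)$, hence the splitting.

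\textbf{Case 2:} $\mu=\lambda+n\alpha$ with $n\in\mathbb{Z}\setminus\{0\}$. Evaluating the central element $\mathtt{c}$ from \eqref{eq4} on a highest weight vector shows that $\mathtt{c}$ acts on $L(\nu)$ by the scalar $\chi_\nu := \nu(\overline h)(\nu(h)+2)$. Since $\alpha(\overline h)=0$ we have $\mu(\overline h)=\lambda(\overline h)$, and hence $\chi_\mu-\chi_\lambda=2n\,\lambda(\overline h)\neq 0$, invoking both $n\neq 0$ and $\lambda(\overline h)\neq 0$. The two-step filtration of $E$ forces $(\mathtt{c}-\chi_\mu)(\mathtt{c}-\chi_\lambda)$ to annihilate $E$, and since the two scalars are distinct, the Chinese Remainder Theorem gives $E=\ker(\mathtt{c}-\chi_\mu)\oplus\ker(\mathtt{c}-\chi_\lambda)$, which readily identifies with $L(\mu)\oplus L(\lambda)$.

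The vanishing of $\mathrm{Ext}^1_{\mathcal{X}}(L(\mu),L(\lambda))$ is entirely symmetric: both the disjoint-coset criterion of Case 1 and the Casimir-separation of Case 2 are symmetric in $\lambda$ and $\mu$, and neither relies on the direction of the sequence. No serious obstacle is anticipated; the main subtlety to verify is that the weight-space grouping in Case 1 really commutes with the $\mathfrak{g}$-action even though $E$ is only a generalized weight module, but \eqref{eq1} passes through unchanged to entire generalized weight spaces, so the argument goes through.
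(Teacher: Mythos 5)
Your proof is correct and follows essentially the same two-case strategy as the paper: separation by $\mathbb{Z}\alpha$-cosets of the support when $\lambda-\mu\notin\mathbb{Z}\alpha$, and separation by the eigenvalue $\nu(\overline{h})(\nu(h)+2)$ of the Casimir $\mathtt{c}$ when $\mu\in\lambda+\mathbb{Z}\alpha$. The only (minor) divergence is in the first case, where you split the middle term directly by grouping generalized weight spaces into cosets, while the paper instead produces a splitting map by adjunction from $\Delta(\lambda)=L(\lambda)$; both work, and your variant has the small advantage of not using the hypothesis $\lambda(\overline{h})\neq 0$ there, consistent with the paper's Corollary~\ref{cor8}.
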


\begin{proof}
We prove that $\mathrm{Ext}^1_{\mathcal{X}} (L(\lambda),L(\mu))=0$, the second 
claim is similar. Assume that 
\begin{equation}\label{eq3}
0\to L(\mu)\to M\to L(\lambda)\to 0 
\end{equation}
is a short exact sequence in $\mathcal{X}$. Note that 
\begin{displaymath}
\mathrm{supp}(M)= 
\mathrm{supp}(L(\lambda))\bigcup
\mathrm{supp}(L(\mu))\subset
\{\lambda-\mathbb{Z}_{\geq 0}\alpha\}\bigcup
\{\mu-\mathbb{Z}_{\geq 0}\alpha\}
\end{displaymath}
by \eqref{eq5}. If $\mu\not\in\lambda+\mathbb{Z}\alpha$, then
$M^{\lambda+\alpha}=0$ and hence $\overline{\mathbf{n}}_+M^{\lambda}=0$.
By adjunction, this gives as a non-zero homomorphism from $\Delta(\lambda)=L(\lambda)$
to $M$ which splits \eqref{eq3}. This implies the necessary claim in case
$\mu\not\in\lambda+\mathbb{Z}\alpha$.

If $\mu\in\lambda+\mathbb{Z}\alpha$, then $\mu(\overline{h})=\lambda(\overline{h})\neq 0$.
By applying the Casimir element $\mathtt{c}$, see \eqref{eq4}, to the highest
weight elements in $L(\lambda)$ and $L(\mu)$, we see that $\mathtt{c}$ acts as the
scalar $\lambda(\overline{h})(\lambda(h)+2)$ on $L(\lambda)$ and
as the scalar $\mu(\overline{h})(\mu(h)+2)$ on $L(\mu)$.
As $\mu(\overline{h})=\lambda(\overline{h})\neq 0$ but 
$\mu\neq\lambda$, we obtain
\begin{displaymath}
\lambda(\overline{h})(\lambda(h)+2)\neq \mu(\overline{h})(\mu(h)+2).  
\end{displaymath}
This means that $L(\lambda)$ and $L(\mu)$ have different central characters
and hence \eqref{eq3} splits. The claim of the proposition follows.
\end{proof}

Following the proof of Proposition~\ref{prop7}, we also obtain the following claim.

\begin{corollary}\label{cor8}
Let $\lambda,\mu\in\overline{\mathfrak{h}}^*$ be such that 
$\lambda\not\in\mu+\mathbb{Z}\alpha$. Then, for any 
$\mathcal{X}\in\{\mathcal{O},\widetilde{\mathcal{O}}\}$, we have 
\begin{displaymath}
\mathrm{Ext}^1_{\mathcal{X}} (L(\lambda),L(\mu))=
\mathrm{Ext}^1_{\mathcal{X}} (L(\mu),L(\lambda))=0 .
\end{displaymath} 
\end{corollary}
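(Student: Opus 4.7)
The plan is to adapt the ``support-disjoint'' half of the proof of Proposition~\ref{prop7}, bypassing the adjunction step that relied on $\Delta(\lambda)=L(\lambda)$, so that no hypothesis on $\lambda(\overline{h})$ is needed.

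First, I would note that the condition $\lambda\not\in\mu+\mathbb{Z}\alpha$ is symmetric in $\lambda$ and $\mu$, so it is enough to show $\mathrm{Ext}^1_{\mathcal{X}}(L(\lambda),L(\mu))=0$; the vanishing of $\mathrm{Ext}^1_{\mathcal{X}}(L(\mu),L(\lambda))$ follows by swapping roles. Fix a short exact sequence
\begin{displaymath}
0\to L(\mu)\to M\to L(\lambda)\to 0
\end{displaymath}
in $\mathcal{X}$. By \eqref{eq5}, $\mathrm{supp}(L(\lambda))\subset\lambda-\mathbb{Z}_{\geq 0}\alpha$ and $\mathrm{supp}(L(\mu))\subset\mu-\mathbb{Z}_{\geq 0}\alpha$, and these two sets are disjoint precisely because $\lambda$ and $\mu$ lie in different $\mathbb{Z}\alpha$-cosets of $\overline{\mathfrak{h}}^*$.

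Next, I would use the generalized weight decomposition $M=\bigoplus_\nu M^\nu$ (available because $M\in\widetilde{\mathcal{O}}$) and set
\begin{displaymath}
M':=\bigoplus_{\nu\in\lambda+\mathbb{Z}\alpha}M^\nu.
\end{displaymath}
The weight shifts in \eqref{eq1} are $0$ and $\pm\alpha$, all of which preserve the coset $\lambda+\mathbb{Z}\alpha$, so $M'$ is a $\mathfrak{g}$-submodule of $M$. Since $L(\mu)^\nu=0$ for every $\nu\in\lambda+\mathbb{Z}\alpha$, we get $M'\cap L(\mu)=0$. On the other hand, the short exact sequence is exact on each generalized weight space (generalized weight spaces are kernels of powers of $\mathbf{m}_\nu$ and hence preserved by $\mathfrak{g}$-homomorphisms), so the projection $M\tto L(\lambda)$ restricts to a surjection $M'\tto L(\lambda)$. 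Combined with $M'\cap L(\mu)=0$, this gives an isomorphism $M'\xrightarrow{\sim} L(\lambda)$, which splits the sequence.

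There is no serious obstacle here; the only point requiring care is verifying that the ``coset projection'' $M\tto M'$ really is $\mathfrak{g}$-equivariant and that generalized weight spaces behave well under the short exact sequence, both of which follow directly from \eqref{eq1} and from the fact that $M^\nu$ is a characteristic subspace for the commuting locally-finite operators $h,\overline{h}$. The argument applies uniformly in $\mathcal{X}\in\{\mathcal{O},\widetilde{\mathcal{O}}\}$ since in either case the hypothesis needed is only local finiteness of $\overline{\mathfrak{h}}$, which guarantees the decomposition into generalized weight spaces.
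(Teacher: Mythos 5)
Your argument is correct, but it is not the route the paper takes. The paper proves Corollary~\ref{cor8} by ``following the proof of Proposition~\ref{prop7}'': from $M^{\lambda+\alpha}=0$ one gets $\overline{\mathfrak{n}}_+M^{\lambda}=0$, a common $\overline{\mathfrak{h}}$-eigenvector in $M^\lambda$ then yields, by adjunction, a non-zero map $\Delta(\lambda)\to M$, and one checks that its image meets $L(\mu)$ trivially (its support lies in $\lambda-\mathbb{Z}_{\geq 0}\alpha$, which is disjoint from $\mathrm{supp}(L(\mu))$), so the image is a copy of $L(\lambda)$ splitting the sequence. Note that, unlike in Proposition~\ref{prop7}, here one cannot say $\Delta(\lambda)=L(\lambda)$, so this last support check is genuinely needed. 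You instead split off the coset submodule $M'=\bigoplus_{\nu\in\lambda+\mathbb{Z}\alpha}M^{\nu}$, which is $\mathfrak{g}$-stable by \eqref{eq1}, meets $L(\mu)$ trivially, and surjects onto $L(\lambda)$ because the generalized weight decomposition is exact on short exact sequences. This bypasses the adjunction and the Verma module entirely, and in fact proves something slightly stronger: every object of $\mathcal{X}$ decomposes as a direct sum of its coset submodules, which is exactly the first step of the block decomposition in Theorem~\ref{thm9}. Both arguments are elementary support arguments; yours is marginally more robust (no case distinction on whether $\Delta(\lambda)$ is simple), while the paper's recycles the machinery already set up for Proposition~\ref{prop7}. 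The two small points you flag --- $\mathfrak{g}$-equivariance of the coset projection and exactness on generalized weight spaces --- are indeed the only things to verify, and both hold for the reasons you give.
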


\subsection{Easy blocks}\label{s4.2}
Let $\mathcal{X}\in\{\mathcal{O},\widetilde{\mathcal{O}}\}$.
Set 
\begin{displaymath}
\overline{\mathfrak{h}}^*_1:=
\{\lambda\in \overline{\mathfrak{h}}^*\,:\, \lambda(\overline{h})\neq 0\},\qquad
\overline{\mathfrak{h}}^*_0:=\overline{\mathfrak{h}}^*\setminus
\overline{\mathfrak{h}}^*_1.
\end{displaymath}
For $\lambda\in \overline{\mathfrak{h}}^*_1$, denote by 
$\mathcal{X}(\lambda)$ the full subcategory of $\mathcal{X}$ consisting of
all modules with support $\{\lambda-\mathbb{Z}_{\geq 0}\alpha\}$.

\subsection{Difficult blocks}\label{s4.3}
For $\xi\in \overline{\mathfrak{h}}^*_0/\mathbb{Z}\alpha$, 
denote by $\mathcal{X}(\xi)$ the full subcategory of $\mathcal{X}$ consisting of
all modules whose support is contained in $\xi$.

\subsection{Block decomposition}\label{s4.4}

\begin{theorem}\label{thm9}
For $\mathcal{X}\in\{\mathcal{O},\widetilde{\mathcal{O}}\}$, we have a decomposition
\begin{equation}\label{eq6}
\mathcal{X}=
\bigoplus_{\lambda\in \overline{\mathfrak{h}}^*_1}\mathcal{X}(\lambda)
\oplus
\bigoplus_{\xi\in \overline{\mathfrak{h}}^*_0/\mathbb{Z}\alpha}
\mathcal{X}(\xi)
\end{equation}
of $\mathcal{X}$ into a direct sum of indecomposable abelian 
subcategories (blocks).
\end{theorem}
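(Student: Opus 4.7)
My plan is to prove the decomposition by producing, for every $M\in\mathcal{X}$, an explicit splitting indexed by the blocks on the right-hand side of \eqref{eq6}, and then to verify that each summand is indecomposable. The two key tools are the generalized weight decomposition of $M$ and the central Casimir element $\mathtt{c}$ of \eqref{eq4}.

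The first step is to split $M$ along cosets in $\overline{\mathfrak{h}}^*/\mathbb{Z}\alpha$. By \eqref{eq1}, for each coset $\xi$ the subspace $M_\xi:=\bigoplus_{\mu\in\xi}M^\mu$ is a $\mathfrak{g}$-submodule, and $M=\bigoplus_\xi M_\xi$. Because $\alpha(\overline{h})=0$, every coset lies entirely in $\overline{\mathfrak{h}}^*_0$ or entirely in $\overline{\mathfrak{h}}^*_1$, so this step already extracts the summands $\mathcal{X}(\xi)$ for $\xi\in\overline{\mathfrak{h}}^*_0/\mathbb{Z}\alpha$. For $\xi\subset\overline{\mathfrak{h}}^*_1$ I would refine $M_\xi$ using $\mathtt{c}$: every $M^\mu$ is finite-dimensional by Lemma~\ref{lem11} and is preserved by $\mathtt{c}$ (central, hence commuting with $\overline{\mathfrak{h}}$), so $\mathtt{c}$ acts locally finitely on $M_\xi$ and its generalized eigenspaces are $\mathfrak{g}$-submodules. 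On $L(\lambda)$ the Casimir acts as the scalar $\lambda(\overline{h})(\lambda(h)+2)$, and restricted to $\xi\subset\overline{\mathfrak{h}}^*_1$ this is an injective function of $\lambda$, because $\lambda(\overline{h})$ is a fixed non-zero value on $\xi$ while $\lambda(h)$ traverses an arithmetic progression with step~$2$. Hence the generalized $\mathtt{c}$-eigenspace decomposition of $M_\xi$ produces the $\mathcal{X}(\lambda)$-summands, $\lambda\in\xi$, each having composition factors isomorphic to $L(\lambda)$ only. Pairwise vanishing of $\mathrm{Hom}$-spaces between distinct summands is immediate since their composition factors are disjoint, while Proposition~\ref{prop7} and Corollary~\ref{cor8} reaffirm that no non-trivial $\mathrm{Ext}^1$ connects different blocks.

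For indecomposability, $\mathcal{X}(\lambda)$ with $\lambda\in\overline{\mathfrak{h}}^*_1$ has $L(\lambda)$ as its unique simple object and is therefore trivially indecomposable. The main obstacle is indecomposability of the \emph{difficult} blocks $\mathcal{X}(\xi)$ with $\xi\in\overline{\mathfrak{h}}^*_0/\mathbb{Z}\alpha$: one must show that every pair of simples $L(\mu_1),L(\mu_2)$ with $\mu_i\in\xi$ is linked by a chain of non-split extensions. For this I would use the structure of the non-simple Verma modules $\Delta(\mu)$, $\mu\in\xi$, whose composition series connects $L(\mu)$ with the simples appearing in its maximal submodule; the required analysis is carried out in Section~\ref{s6}, and the corresponding non-vanishing $\mathrm{Ext}^1$-groups are computed explicitly in Section~\ref{s5}.
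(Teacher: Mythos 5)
Your proposal is correct and follows essentially the same route as the paper: the paper likewise splits a module along the $\mathbb{Z}\alpha$-cosets of its support, separates the simples within a coset of $\overline{\mathfrak{h}}^*_1$ by their Casimir eigenvalues $\lambda(\overline{h})(\lambda(h)+2)$ (packaged there as the Ext-vanishing of Proposition~\ref{prop7} applied to an indecomposable module, rather than as your explicit generalized-eigenspace decomposition), and links the simples within a difficult coset through the indecomposable Verma modules via the embedding $\Delta(\lambda-\alpha)\hookrightarrow\Delta(\lambda)$ generated by $\overline{f}v_{\lambda}$. Your deferral of this last linkage step to Sections~\ref{s6} and~\ref{s5} is legitimate (those results do not depend on Theorem~\ref{thm9}), though the paper simply writes out the one-line computation $e\overline{f}v_{\lambda}=\overline{e}\overline{f}v_{\lambda}=0$ inside the proof itself.
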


\begin{proof}
Let $M\in \mathcal{X}$ be an indecomposable module. Then there is
$\lambda\in \overline{\mathfrak{h}}^*$ such that 
$\mathrm{supp}(M)\subset \xi:=\lambda+\mathbb{Z}\alpha$.
If $\lambda(\overline{h})=0$, then, by definition,
$M\subset \mathcal{X}(\xi)$. If $\lambda(\overline{h})\neq 0$,
then, by Proposition~\ref{prop7}, all composition subquotients of
$M$ are isomorphic to some $L(\mu)$, where $\mu\in \xi$.
Therefore $M\in \mathcal{X}(\mu)$. This implies existence of the
direct sum decomposition as in \eqref{eq6}.

It remains to prove that all summands in the right hand side of 
\eqref{eq6} are indecomposable. That each $\mathcal{X}(\lambda)$,
where $\lambda\in \overline{\mathfrak{h}}^*_1$, is indecomposable,
is clear as $\mathcal{X}(\lambda)$ contains, by construction,
only one simple module, up to isomorphism.

Let us argue that each $\mathcal{X}(\xi)$, where 
$\xi\in \overline{\mathfrak{h}}^*_0/\mathbb{Z}\alpha$, is indecomposable.
For this it is enough to show that, for every $\lambda\in\xi$,
there is an indecomposable module $M\in \mathcal{X}(\xi)$ 
such that both $\mathbf{k}_{\lambda}(M)$ and $\mathbf{k}_{\lambda-\alpha}(M)$
are non-zero. Take $M=\Delta(\lambda)$. The module $\Delta(\lambda)$ is 
indecomposable as it has simple top. Moreover, 
$\mathbf{k}_{\lambda}(\Delta(\lambda))\neq 0$.
Since $\lambda(\overline{h})=0$, we have
\begin{displaymath}
e\overline{f}v_{\lambda}=\overline{e}\overline{f}v_{\lambda}=0
\quad\text{ and }\quad
h\overline{f}v_{\lambda}=(\lambda-\alpha)(h)\overline{f}v_{\lambda}.
\end{displaymath}
Therefore, by adjunction, mapping $v_{\lambda-\alpha}$ to 
$\overline{f}v_{\lambda}$, extends to a non-zero homomorphism
from $\Delta(\lambda-\alpha)$ to $\Delta(\lambda)$, implying that
$\mathbf{k}_{\lambda-\alpha}(\Delta(\lambda))\neq 0$.
The claim follows.
\end{proof}

\section{Structure of non-simple Verma modules}\label{s6}

\subsection{Easy case}\label{s6.1}

\begin{proposition}\label{prop51}
Assume that $\lambda\in\overline{\mathfrak{h}}^*$ is such that 
$\lambda(\overline{h})=0$ and $\lambda(h)\notin\mathbb{Z}_{\geq 0}$.
Then there is a short exact sequence
\begin{displaymath}
0\to \Delta(\lambda-\alpha)\to\Delta(\lambda)\to L(\lambda)\to 0. 
\end{displaymath}
\end{proposition}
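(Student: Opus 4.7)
The plan is to construct a submodule of $\Delta(\lambda)$ isomorphic to $\Delta(\lambda-\alpha)$ whose quotient is $L(\lambda)$. I would set $w:=\overline{f}v_{\lambda}$ and first verify that $w$ has strong $\overline{\mathfrak{h}}$-weight $\lambda-\alpha$ and is killed by $\overline{\mathfrak{n}}_+$: using the relations $[e,\overline{f}]=\overline{h}$, $[\overline{e},\overline{f}]=0$, $[h,\overline{f}]=-2\overline{f}$ and $[\overline{h},\overline{f}]=0$, together with $\lambda(\overline{h})=0$, a direct computation yields $ew=\overline{h}v_{\lambda}=0$, $\overline{e}w=0$, $hw=(\lambda(h)-2)w$ and $\overline{h}w=0$. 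Hence $\mathbb{C}w\cong \mathbb{C}_{\lambda-\alpha}$ as a $\overline{\mathfrak{b}}$-module, and the induction-restriction adjunction supplies a $\mathfrak{g}$-homomorphism $\varphi:\Delta(\lambda-\alpha)\to \Delta(\lambda)$ with $\varphi(v_{\lambda-\alpha})=w$.

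Next I would identify $\mathrm{im}(\varphi)$ with the kernel of the canonical surjection $\pi:\Delta(\lambda)\tto L(\lambda)$. Since $\lambda(\overline{h})=0$, Corollary~\ref{cor2} gives $L(\lambda)\cong \iota(L^{\mathfrak{sl}_2}(\lambda|_{\mathfrak{h}}))$, and the elements $\overline{e}$, $\overline{h}$, $\overline{f}$ (which span the kernel of the projection $\mathfrak{g}\tto\mathfrak{sl}_2$) act as zero on any module in the image of $\iota$; in particular, $\pi(w)=\overline{f}\pi(v_{\lambda})=0$, so $\pi\circ\varphi=0$ and $\mathrm{im}(\varphi)\subset \ker(\pi)$. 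The assumption $\lambda(h)\notin\mathbb{Z}_{\geq 0}$ now guarantees $L^{\mathfrak{sl}_2}(\lambda|_{\mathfrak{h}})=\Delta^{\mathfrak{sl}_2}(\lambda|_{\mathfrak{h}})$, so $\dim L(\lambda)^{\lambda-n\alpha}=1$ for every $n\in\mathbb{Z}_{\geq 0}$, and \eqref{eq2} then gives $\dim\ker(\pi)^{\lambda-m\alpha}=(m+1)-1=m$ for all $m\geq 0$.

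To close the argument I would compute $\mathrm{im}(\varphi)^{\lambda-m\alpha}$: it is spanned by $\{f^{i}\overline{f}^{\,m-i}v_{\lambda}:0\leq i\leq m-1\}$, which is a subset of the PBW basis of $\Delta(\lambda)$ recalled in \eqref{eq2} and therefore linearly independent, so $\dim\mathrm{im}(\varphi)^{\lambda-m\alpha}=m$. Since $\dim\Delta(\lambda-\alpha)^{\lambda-m\alpha}=m$ as well, $\varphi$ is injective on every weight space and hence injective; the equality of dimensions with $\dim\ker(\pi)^{\lambda-m\alpha}$ then forces $\mathrm{im}(\varphi)=\ker(\pi)$, yielding the short exact sequence. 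The main obstacle is this dimension match, and it is precisely here that $\lambda(h)\notin\mathbb{Z}_{\geq 0}$ is essential: if $L^{\mathfrak{sl}_2}(\lambda|_{\mathfrak{h}})$ were finite dimensional, the weight spaces of $\ker(\pi)$ sufficiently far below $\lambda$ would be strictly larger than those of $\mathrm{im}(\varphi)$, and a single copy of $\Delta(\lambda-\alpha)$ could no longer exhaust the kernel.
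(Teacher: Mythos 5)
Your proposal is correct and follows essentially the same route as the paper: both construct the embedding $\Delta(\lambda-\alpha)\hookrightarrow\Delta(\lambda)$ from the singular vector $\overline{f}v_{\lambda}$ via adjunction and then identify the cokernel with $L(\lambda)$. The only (minor) divergence is in the last step: the paper proves directly that $\Delta(\lambda)/\Delta(\lambda-\alpha)$, with basis $\{f^iv_{\lambda}\}$, is simple by computing that $e^if^iv_{\lambda}$ is a nonzero multiple of $v_{\lambda}$ when $\lambda(h)\notin\mathbb{Z}_{\geq 0}$, whereas you quote Corollary~\ref{cor2} together with the simplicity of the antidominant $\mathfrak{sl}_2$-Verma module and close the argument by matching weight-space dimensions of $\mathrm{im}(\varphi)$ and $\ker(\pi)$ --- both hinge on the same underlying fact and are equally valid.
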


\begin{proof}
Let $v_{\lambda}$ be the canonical generator of $\Delta(\lambda)$.
From $\lambda(\overline{h})=0$, it follows that 
$e\overline{f}v_{\lambda}=\overline{e}\overline{f}v_{\lambda}=
\overline{h}\overline{f}v_{\lambda}=0$ and
$h\overline{f}v_{\lambda}=(\lambda-\alpha)(h)v_{\lambda}$.
Hence, by adjunction, there is a non-zero homomorphism
$\Delta(\lambda-\alpha)\to\Delta(\lambda)$ sending
$v_{\lambda-\alpha}$ to $\overline{f}v_{\lambda}$.
By the PBW Theorem, this homomorphism is injective and
the quotient $\Delta(\lambda)/\Delta(\lambda-\alpha)$ has a 
basis of the form $\{f^{i}v_{\lambda}\,:\,i\in \mathbb{Z}_{\geq 0}\}$.

Up to a positive integer, $e^{i}f^{i}v_{\lambda}$ is a multiple of
$v_{\lambda}$ with the coefficient $\displaystyle\prod_{j=0}^{i-1}(\lambda(h)-j)$.
As $\lambda(h)\notin\mathbb{Z}_{\geq 0}$, we obtain that 
the quotient $\Delta(\lambda)/\Delta(\lambda-\alpha)$ is a simple
module and hence is isomorphic to $L(\lambda)$. The claim follows.
\end{proof}

\begin{corollary}\label{cor52}
Assume that $\lambda\in\overline{\mathfrak{h}}^*$ is such that 
$\lambda(\overline{h})=0$ and $\lambda(h)\notin\mathbb{Z}_{\geq 0}$.
\begin{enumerate}[$($a$)$]
\item\label{cor52.1}
Then there is a filtration
\begin{displaymath}
\dots\subset \Delta(\lambda-2\alpha)\subset \Delta(\lambda-\alpha)\subset \Delta(\lambda). 
\end{displaymath}
Moreover, all subquotients in this filtration are simple modules and we also have
$\displaystyle \bigcap_{i\in\mathbb{Z}_{\geq 0}}\Delta(\lambda-i\alpha)=0$.
\item\label{cor52.2}  The filtration given by \eqref{cor52.1} is the 
unique composition series of $\Delta(\lambda)$, in other words,
$\Delta(\lambda)$ is a uniserial module.
\end{enumerate}
\end{corollary}

Note that, under the assumptions of Corollary~\ref{cor52},
the module $\Delta(\lambda)$ has infinite length. 
This emphasizes the difference with the classical 
$\mathfrak{sl}_2$-situation, see Subsection~3.2 in \cite{Ma} for the latter.

\begin{proof}
Existence of such filtration and the claim that 
all subquotients in this filtration are simple follows
directly from Proposition~\ref{prop51}.
The claim that $\displaystyle \bigcap_{i\in\mathbb{Z}_{\geq 0}}\Delta(\lambda-i\alpha)=0$
follows from the fact that
$\displaystyle \bigcap_{i\in\mathbb{Z}_{\geq 0}}\mathrm{supp}(\Delta(\lambda-i\alpha))
=\varnothing$, which, in turn, is a consequence of \eqref{eq5}.
This proves claim~\eqref{cor52.1}.

To prove claim~\eqref{cor52.2} we only need to show that any non-zero submodule $M$ of 
$\Delta(\lambda)$ has the form $\Delta(\lambda-i\alpha)$, for some $i$. Choose
$i$ such that $\lambda-i\alpha$ is the highest weight of $M$. From \eqref{cor52.1} 
it follows that any simple subquotient of $\Delta(\lambda)/\Delta(\lambda-i\alpha)$
has a weight of the form $\lambda-j\alpha$, where $j<i$. Therefore 
$M\subset \Delta(\lambda-i\alpha)$. That $M=\Delta(\lambda-i\alpha)$ follows from the
fact that $\Delta(\lambda-i\alpha)$ is generated by its highest weight vector.
This proves claim~\eqref{cor52.2} and completes the proof of the corollary.
\end{proof}

\subsection{Difficult case}\label{s6.2}

\begin{lemma}\label{lem57}
Assume that $\lambda\in\overline{\mathfrak{h}}^*$ is such that 
$\lambda(\overline{h})=0$ and $\lambda(h)=n\in\mathbb{Z}_{\geq 0}$.
Then there are  short exact sequences
\begin{equation}\label{eq57-1}
0\to \Delta(\lambda-\alpha)\to\Delta(\lambda)\to M\to 0
\end{equation}
and
\begin{equation}\label{eq57-2}
0\to L(\lambda-(n+1)\alpha)\to M\to L(\lambda)\to 0.
\end{equation}
\end{lemma}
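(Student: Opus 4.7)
The plan is to construct the first sequence exactly as in Proposition~\ref{prop51}, and then to recognize the quotient $M$ as a module pulled back along $\iota$, which reduces the second sequence to the classical structure of an $\mathfrak{sl}_2$-Verma module. For \eqref{eq57-1}, the fact that $\lambda(\overline{h})=0$ forces $\overline{f}v_\lambda\in\Delta(\lambda)$ to be annihilated by $e$, $\overline{e}$, and $\overline{h}$ and to have $h$-eigenvalue $(\lambda-\alpha)(h)$; by adjunction and the PBW theorem (the image is spanned by $\{f^i\overline{f}^{j+1}v_\lambda\}_{i,j\geq 0}$), this produces an injection $\Delta(\lambda-\alpha)\hookrightarrow\Delta(\lambda)$. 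Taking $M$ to be the cokernel gives \eqref{eq57-1}, and the images of $\{f^k v_\lambda\}_{k\geq 0}$ form a $\mathbb{C}$-basis of $M$.

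The central claim is that the ideal $\mathfrak{sl}_2\otimes x$ of $\mathfrak{g}$ annihilates $M$. Since $[\overline{h},e]=2\overline{e}$ and $[\overline{h},f]=-2\overline{f}$, this ideal is generated by $\overline{h}$, so it suffices to show $\overline{h}M=0$. Using $[\overline{f},f]=0$ one obtains
\[
\overline{h}\,f^k v_\lambda \;=\; f^k\overline{h}v_\lambda - 2k\,\overline{f}f^{k-1}v_\lambda \;=\; -2k\,f^{k-1}\overline{f}v_\lambda,
\]
and the right-hand side lies in the submodule $\Delta(\lambda-\alpha)$ (as $\overline{f}v_\lambda$ is the image of its canonical generator), hence vanishes in $M$. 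Since the images of $f^k v_\lambda$ span $M$, this gives $\overline{h}M=0$, and therefore $\overline{e}M=\overline{f}M=0$ as well.

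With the Takiff ideal acting trivially, $M$ lies in the image of $\iota$; regarded as an $\mathfrak{sl}_2$-module it is cyclic of highest weight $n=\lambda(h)$, and its weight multiplicities match those of $\Delta^{\mathfrak{sl}_2}(n)$ (computed from \eqref{eq2} through the first exact sequence), so $M\cong\iota(\Delta^{\mathfrak{sl}_2}(n))$. The classical short exact sequence $0\to L^{\mathfrak{sl}_2}(-n-2)\to\Delta^{\mathfrak{sl}_2}(n)\to L^{\mathfrak{sl}_2}(n)\to 0$, pushed through $\iota$ and interpreted via Corollary~\ref{cor2} (noting $(\lambda-(n+1)\alpha)(h)=n-2(n+1)=-n-2$ and $(\lambda-(n+1)\alpha)(\overline{h})=0$), then yields \eqref{eq57-2}. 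The main obstacle is the bookkeeping verification that $\overline{h}$ kills $M$; once this is in place, the rest is just the well-known structure of $\Delta^{\mathfrak{sl}_2}(n)$ for $n\in\mathbb{Z}_{\geq 0}$ transported back through $\iota$.
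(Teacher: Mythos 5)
Your proof is correct and follows the same route as the paper: construct the embedding $\Delta(\lambda-\alpha)\hookrightarrow\Delta(\lambda)$ generated by $\overline{f}v_\lambda$ as in Proposition~\ref{prop51}, and identify the cokernel $M$ with (the pullback along $\iota$ of) the $\mathfrak{sl}_2$-Verma module $\Delta^{\mathfrak{sl}_2}(n)$, whose classical two-step composition series gives \eqref{eq57-2}. The paper simply asserts that $M$ is an $\mathfrak{sl}_2$-Verma module and cites the known structure theory, whereas you supply the verification (the commutator computation showing $\overline{h}$, hence the whole Takiff ideal, annihilates $M$, plus the character comparison); this is a welcome filling-in of detail, not a different argument.
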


\begin{proof}
Similarly to Proposition~\ref{prop51}, the vector
$\overline{f}v_{\lambda}$ generates a submodule of
$\Delta(\lambda)$ isomorphic to $\Delta(\lambda-\alpha)$,
giving the exact sequence \eqref{eq57-1}, with
$M=\Delta(\lambda)/\Delta(\lambda-\alpha)$.
The module $M$ is isomorphic to a Verma module for 
$\mathfrak{sl}_2$ and has simple subquotients as
described in \eqref{eq57-2}, see Theorem~3.16 in \cite{Ma}.
\end{proof}

\begin{lemma}\label{lem58}
Assume that $\lambda\in\overline{\mathfrak{h}}^*$ is such that 
$\lambda(\overline{h})=0$ and $\lambda(h)=n\in\mathbb{Z}_{\geq 0}$.
Then the element ${f}^{n+1}v_{\lambda}$ generates
a submodule $K(\lambda)$ of $\Delta(\lambda)$ such that the module
$M_n:=\Delta(\lambda)/K(\lambda)$ is uniserial and has a filtration
\begin{equation}\label{eq58-1}
0=X_k\subset\dots \subset X_1\subset X_{0}=M_n,
\end{equation}
where $k=\lceil\frac{n+1}{2}\rceil$ and
$X_i/X_{i+1}\cong L(\lambda-i\alpha)$, for $i=0,1,\dots,k-1$.
\end{lemma}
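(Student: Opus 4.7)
The plan is to prove the lemma by strong induction on $n$, building on the embedding $\Delta(\lambda-\alpha)\hookrightarrow\Delta(\lambda)$ provided by Lemma~\ref{lem57}. The base cases are $n = 0$ and $n = 1$, both with $k = 1$, so the assertion reduces to $M_n \cong L(\lambda)$. Here a direct calculation using the commutators $[\overline{h},f] = -2\overline{f}$ and $[\overline{e},f] = \overline{h}$ yields $\overline{h}fv_\lambda = -2\overline{f}v_\lambda$ when $n = 0$ and $\overline{e}f^{2}v_\lambda = -2\overline{f}v_\lambda$ when $n = 1$, so that $\overline{f}v_\lambda\in K_n$ and hence $V_1 := \Delta(\lambda-\alpha) \subseteq K_n$. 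Consequently $M_n$ is a quotient of $\Delta(\lambda)/V_1 \cong \iota(\Delta^{\mathfrak{sl}_2}(n))$, and the image of $K_n$ inside this quotient is exactly the socle $\iota(L^{\mathfrak{sl}_2}(-n-2))$, generated by the class of $f^{n+1}v_\lambda$; the further quotient is $\iota(L^{\mathfrak{sl}_2}(n)) = L(\lambda)$.

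For the inductive step with $n\geq 2$, set $K':=U(\mathfrak{g})f^{n-1}v_{\lambda-\alpha}\subseteq V_1$. Since $(\lambda-\alpha)(\overline{h})=0$ and $(\lambda-\alpha)(h) = n-2 \in \mathbb{Z}_{\geq 0}$, the induction hypothesis applied to the pair $(\lambda-\alpha,n-2)$ asserts that $V_1/K'$ is uniserial of length $k-1 = \lceil(n-1)/2\rceil$, with composition factors $L(\lambda-\alpha), L(\lambda-2\alpha),\dots, L(\lambda-(k-1)\alpha)$ from top to bottom.

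The key step is to prove the identity $K_n = K' + \mathbb{C}[f]\cdot f^{n+1}v_\lambda$. The inclusion $\supseteq$ follows from the computation $\overline{e}f^{n+1}v_\lambda = -n(n+1)f^{n-1}v_{\lambda-\alpha}$, obtained via $[\overline{e},f^{n+1}]=\sum_{i=0}^{n}f^i\overline{h}f^{n-i}$, which places the generator of $K'$ inside $K_n$. For $\subseteq$, we verify that $K'+\mathbb{C}[f]f^{n+1}v_\lambda$ is $\mathfrak{g}$-invariant: the actions of $e, h, f$ on each $f^{n+1+j}v_\lambda$ land in $\mathbb{C}[f]f^{n+1}v_\lambda$, while the actions of $\overline{e}, \overline{h}, \overline{f}$ produce scalar multiples of elements of $\mathbb{C}[f]\cdot f^{n-1}v_{\lambda-\alpha}\subseteq K'$, thanks to the rules $[\overline{h},f^m] = -2m\,\overline{f}f^{m-1}$, $[\overline{e},f^m] = \sum_i f^i\overline{h}f^{m-1-i}$, and $[f,\overline{f}]=0$. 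Since $\mathbb{C}[f]f^{n+1}v_\lambda$ consists of PBW monomials with no $\overline{f}$-factor, it meets $V_1$ trivially, so $K_n\cap V_1 = K'$.

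This identity produces a short exact sequence $0\to V_1/K' \to M_n \to L(\lambda)\to 0$: the right-hand term is computed by noting that the image of $\mathbb{C}[f]f^{n+1}v_\lambda$ in $V_0/V_1\cong\iota(\Delta^{\mathfrak{sl}_2}(n))$ is precisely the socle. Since $M_n$ inherits a simple top $L(\lambda)$ from $\Delta(\lambda)$, its unique maximal submodule must equal $V_1/K'$; as the latter is uniserial by induction, $M_n$ is uniserial of length $k$ with the stated composition factors. Setting $X_i := (V_i+K_n)/K_n$ realises the filtration in the statement: for $i\geq 1$ one identifies $X_i$ with the $(i-1)$-st term of the inductive filtration for $V_1/K'$ (so $X_i/X_{i+1}\cong L(\lambda-i\alpha)$), while $X_k=0$ follows from $V_k\subseteq K'$, which is the vanishing of the last term in the inductive filtration. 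The principal obstacle is the explicit $\mathfrak{g}$-invariance check in the central identity, requiring careful bookkeeping with the commutator identities above.
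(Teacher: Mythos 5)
Your proof is correct and follows essentially the same route as the paper: induction in steps of two on $n$ via the embedding $\Delta(\lambda-\alpha)\hookrightarrow\Delta(\lambda)$, the computation $\overline{e}f^{n+1}v_\lambda=-n(n+1)f^{n-1}\overline{f}v_\lambda$, and the resulting extension $0\to M_{n-2}\to M_n\to L(\lambda)\to 0$ combined with the simple-top argument for uniseriality. The only difference is that you make explicit the identity $K_n=K_{n-2}+\mathbb{C}[f]f^{n+1}v_\lambda$ and hence the equality $K_n\cap\Delta(\lambda-\alpha)=K_{n-2}$ needed for injectivity of $M_{n-2}\to M_n$, a point the paper leaves implicit.
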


\begin{proof}
We prove this statement by induction on $n$.
The induction step moves $\lambda-\alpha$ to $\lambda$
and hence changes $n-2$ to $n$. Therefore we have
two different cases for the basis of the induction.

{\bf Case~1: $n=0$.} In this case 
$efv_{\lambda}=\overline{e}fv_{\lambda}=0$
and $\overline{h}fv_{\lambda}=\overline{f}v_{\lambda}$.
From Lemma~\ref{lem57} we thus get $M_0\cong L(\lambda)$.

{\bf Case~2: $n=1$.} In this case 
$ef^2v_{\lambda}=0$ and 
$\overline{e}f^2v_{\lambda}=2\overline{f}v_{\lambda}$.
Again, from Lemma~\ref{lem57}, we thus get $M_1\cong L(\lambda)$.

Let us now prove the induction step. Consider 
$\Delta(\lambda-\alpha)$ as a submodule of $\Delta(\lambda)$ generated by 
$\overline{f}v_{\lambda}$. We claim that 
$K(\lambda)\cap\Delta(\lambda-\alpha)=K(\lambda-\alpha)$.
Indeed, we have  
$\overline{e}f^{n+1}v_{\lambda}= -n(n+1)f^{n-1}\overline{f}v_{\lambda}$,
and thus $K(\lambda-\alpha)\subset K(\lambda)\cap\Delta(\lambda-\alpha)$.
To prove the reverse inclusion, let us analyze the result of applying a
monomial  $\overline{f}^xf^y\overline{h}^a\overline{e}^pe^qh^b\in U(\mathfrak{g})$ 
to $f^{n+1}v_{\lambda}$. As $f^{n+1}v_{\lambda}$ is a weight element, 
setting $b=0$ changes the outcome by a scalar. As $e\cdot f^{n+1}v_{\lambda}=0$
by the $\mathfrak{sl}_2$-theory, we may assume $q=0$. 
If $x=a=p=0$, then the elements $f^y\cdot f^{n+1}v_{\lambda}$ are linearly
independent and do not belong to $\Delta(\lambda-\alpha)$.
Therefore, if a linear combination of elements of the form
\begin{displaymath}
\overline{f}^xf^y\overline{h}^a\overline{e}^p \cdot f^{n+1}v_{\lambda}
\end{displaymath}
is in $\Delta(\lambda-\alpha)$, then  at least one of $y$, $a$ or $q$ must be non-zero.
Commuting the corresponding overlined basis element to the right and using 
$\overline{e}v_{\lambda}= \overline{h}v_{\lambda}=0$, one shows 
that our linear combination ends up in $K(\lambda-\alpha)$.

Consider now the following diagram:
\begin{equation}\label{eqn1287}
\xymatrix{ 
M_{n-2}\ar@{^{(}-->}[rr]&& M_n\ar@{.>>}[rr]&& \mathrm{Coker}_{M}\\ 
\Delta(\lambda-\alpha)\ar@{^{(}->}[rr]\ar@{-->>}[u]&&
\Delta(\lambda)\ar@{-->>}[u]\ar@{.>>}[rr] &&\mathrm{Coker}_{\Delta}\ar@{.>>}[u]  \\ 
K(\lambda-\alpha)\ar@{^{(}->}[rr]\ar@{^{(}->}[u]&&
K(\lambda)\ar@{^{(}->}[u]\ar@{.>>}[rr] &&\mathrm{Coker}_K \ar@{^{(}.>}[u] 
} 
\end{equation}
The solid part of this diagram consists of natural inclusions.
By the previous paragraph, This solid part is, in fact, a commutative 
pullback diagram. The vertical dashed arrows are natural projections. 
The horizontal dashed arrow is induced by the solid part such that the
dashed box commutes. The horizontal dashed arrow 
is injective since the solid part is a pullback. 
The dotted part of the diagram is given by the Snake Lemma
and the whole diagram \eqref{eqn1287} commutes.
By the Snake Lemma, all rows and all columns of \eqref{eqn1287}
are short exact sequences.

From the Second Isomorphism Theorem and definitions, we have 
\begin{displaymath}
\mathrm{Coker}_{M}\cong \Delta(\lambda)/(K(\lambda)+\Delta(\lambda-\alpha)) 
\cong  L(\lambda). 
\end{displaymath}
Therefore, the upper row of \eqref{eqn1287} gives a short exact sequence
\begin{displaymath}
0\to M_{n-2}\to M_n\to L(\lambda)\to 0. 
\end{displaymath}
As $L(\lambda)$ is a unique simple top of
$\Delta(\lambda)$, the module $L(\lambda)$
also must be a unique simple top of $M_n$.
Now all necessary claims follow by induction.
\end{proof}

As an immediate consequence of the above, we obtain:

\begin{corollary}\label{cor59}
Assume that $\lambda\in\overline{\mathfrak{h}}^*$ is such that 
$\lambda(\overline{h})=0$ and $\lambda(h)=n\in\mathbb{Z}_{\geq 0}$.
The Hasse diagram of the partially ordered, by inclusion, set of 
submodules of $\Delta(\lambda)$ of the form 
$\Delta(\lambda-i\alpha)$ and $K_i$ is as follows
(here $k=\lceil\frac{n-1}{2}\rceil$):
\begin{displaymath}
\xymatrix{ 
&\Delta(\lambda)\ar@{-}[dl]\ar@{-}[dr]&&&\\
K_n\ar@{-}[dr]&&\Delta(\lambda-\alpha)\ar@{-}[dl]\ar@{-}[dr]&&\\
&K_{n-2}\ar@{-}[dr]&&\dots\ar@{-}[dr]&\\
&&\dots\ar@{-}[dr]&&\Delta(\lambda-k\alpha)\ar@{-}[dl]\\
&&&K_{n-2k}\ar@{-}[dr]&\\
&&&&\Delta(\lambda-(k+1)\alpha)\ar@{-}[d]\\
&&&&\Delta(\lambda-(k+2)\alpha)\ar@{-}[d]\\
&&&&\dots
}
\end{displaymath}
\end{corollary}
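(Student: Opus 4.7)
The plan is to bootstrap from Lemma~\ref{lem58} applied iteratively to each Verma submodule $\Delta(\lambda - j\alpha)$. First, by iterating Lemma~\ref{lem57} (applied to $\lambda - j\alpha$, which has $\overline{h}$-weight zero and $h$-weight $n - 2j \geq 0$ for $j \leq k$), we embed $\Delta(\lambda - (j+1)\alpha)$ into $\Delta(\lambda - j\alpha)$ via $v_{\lambda - (j+1)\alpha} \mapsto \overline{f}\, v_{\lambda - j\alpha}$, producing the descending Verma chain at the right of the diagram. For $j \geq k+1$ Proposition~\ref{prop51} takes over, and by Corollary~\ref{cor52} the tail $\Delta(\lambda - (k+1)\alpha) \supset \Delta(\lambda - (k+2)\alpha) \supset \cdots$ is uniserial. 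For each $j \leq k$, Lemma~\ref{lem58} identifies the submodule $K_{n-2j} \subset \Delta(\lambda - j\alpha)$, generated by $f^{n-2j+1} v_{\lambda - j\alpha}$.

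The crux of the argument is establishing the diagonal inclusions $K_{n-2(j+1)} \subset K_{n-2j}$ (for $j \leq k-1$) and $\Delta(\lambda - (k+1)\alpha) \subset K_{n-2k}$. A straightforward induction on $m$, using the commutators $[\overline{e}, f] = \overline{h}$ and $[\overline{h}, f] = -2\overline{f}$ together with $\overline{e}\, v_{\lambda - j\alpha} = \overline{h}\, v_{\lambda - j\alpha} = 0$ and $\overline{f}\, v_{\lambda - j\alpha} = v_{\lambda - (j+1)\alpha}$, gives
\begin{displaymath}
\overline{e}\, f^m v_{\lambda - j\alpha} \;=\; -m(m-1)\, f^{m-2} v_{\lambda - (j+1)\alpha}.
\end{displaymath}
Taking $m = n - 2j + 1$ in the range $0 \leq j \leq k-1$ (where $n - 2j \geq 2$), the right-hand side is a nonzero scalar multiple of the generator $f^{(n-2(j+1))+1} v_{\lambda - (j+1)\alpha}$ of $K_{n-2(j+1)}$, so $K_{n-2(j+1)} \subset K_{n-2j}$. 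For $j = k$, a separate direct computation (using $\overline{h}\, f v_{\lambda - k\alpha} = -2\overline{f}\, v_{\lambda - k\alpha}$ when $n$ is even, and the displayed formula with $m = 2$ when $n$ is odd) shows $\overline{f}\, v_{\lambda - k\alpha} \in K_{n-2k}$, hence $\Delta(\lambda - (k+1)\alpha) \subset K_{n-2k}$. Together with the tautological inclusions $K_{n-2j} \subset \Delta(\lambda - j\alpha)$ and $\Delta(\lambda - (j+1)\alpha) \subset \Delta(\lambda - j\alpha)$, this accounts for every edge in the diagram.

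Finally, one has to confirm that the diagram indicates the covering relations, i.e., that no unshown inclusion holds. This follows by comparing composition multiplicities using Proposition~\ref{prop12} and the uniserial description of $M_{n-2j} = \Delta(\lambda - j\alpha)/K_{n-2j}$ from Lemma~\ref{lem58}: for $j \leq k-1$, the module $M_{n-2j}$ contains $L(\lambda - (j+1)\alpha)$ as a composition factor, so $\Delta(\lambda - (j+1)\alpha) \not\subset K_{n-2j}$; and on the other side, the image of $K_{n-2j}$ in the length-two quotient $\Delta(\lambda - j\alpha)/\Delta(\lambda - (j+1)\alpha)$ is nontrivial, so $K_{n-2j} \not\subset \Delta(\lambda - (j+1)\alpha)$. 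The only real obstacle is careful bookkeeping; once the formula for $\overline{e} f^m v_{\lambda - j\alpha}$ is in hand and the structure of $M_{n-2j}$ from Lemma~\ref{lem58} is leveraged, the rest is routine verification.
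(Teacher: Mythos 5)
Your proposal is correct and follows essentially the route the paper intends: the corollary is stated there as an immediate consequence of Lemmas~\ref{lem57} and \ref{lem58}, whose proofs already contain your key computation (namely that $\overline{e}$ applied to the generator of $K_{n-2j}$ lands, up to a nonzero scalar, on the generator of $K_{n-2(j+1)}$, and the degenerate cases $n-2j\in\{0,1\}$ handled separately). Your explicit formula $\overline{e}f^m v=-m(m-1)f^{m-2}\overline{f}v$ and the multiplicity check that no undrawn inclusions occur are correct and simply make precise what the paper leaves implicit.
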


\section{Gabriel quivers for all blocks}\label{s5}

\subsection{Easy blocks}\label{s5.1}

\begin{theorem}\label{thm15}
For $\lambda\in\overline{\mathfrak{h}}^*_1$, we have:
\begin{enumerate}[$($a$)$]
\item\label{thm15.1} The block $\mathcal{O}(\lambda)$ is equivalent
to the category of finite dimensional $\mathbb{C}[[x]]$-modules.
\item\label{thm15.2} The block $\widetilde{\mathcal{O}}(\lambda)$ is equivalent
to the category of finite dimensional $\mathbb{C}[[x,y]]$-modules.
\end{enumerate}
\end{theorem}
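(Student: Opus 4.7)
Both parts of the theorem admit parallel treatment, so the plan is to construct, for each block, a quasi-inverse pair $(F, G)$. Write $R := \mathbb{C}[[x]]$ for part~(a) and $R := \mathbb{C}[[x,y]]$ for part~(b), and let $\mathcal{X}$ stand for $\mathcal{O}$ or $\widetilde{\mathcal{O}}$ accordingly. I would define
\[
F \colon \mathcal{X}(\lambda) \longrightarrow R\text{-}\mathrm{mod}_{\mathrm{f.d.}}, \qquad F(M) := M^\lambda,
\]
with $x$ acting on $M^\lambda$ as $\overline{h} - \lambda(\overline{h})$ and, in case~(b), $y$ acting as $h - \lambda(h)$. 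These are commuting nilpotent operators by the very definition of the generalized weight space, and $M^\lambda$ is finite-dimensional by Lemma~\ref{lem11}. In case~(a), since $h$ acts diagonalizably on any $M \in \mathcal{O}(\lambda)$, the operator $h - \lambda(h)$ vanishes identically on $M^\lambda$, which is exactly why only a single formal variable appears on the right-hand side.

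Next, I would construct the quasi-inverse $G$ by induction. A finite-dimensional $R$-module $V$ becomes a $\overline{\mathfrak{b}}$-module via $\overline{h} \mapsto \lambda(\overline{h}) + x$, $h \mapsto \lambda(h) + y$ (with $y := 0$ in case~(a)), and $e, \overline{e} \mapsto 0$; the bracket relations of $\overline{\mathfrak{b}}$ hold automatically because $e, \overline{e}$ act trivially and $x, y$ commute. Setting $G(V) := \mathrm{Ind}_{\overline{\mathfrak{b}}}^{\mathfrak{g}} V$, the PBW theorem yields $G(V) \cong U(\overline{\mathfrak{n}}_-) \otimes V$ as a vector space, with support exactly $\lambda - \mathbb{Z}_{\geq 0}\alpha$ and finite-dimensional weight spaces; in case~(a), the weight grading on $U(\overline{\mathfrak{n}}_-)$ ensures that $h$ remains diagonalizable on $G(V)$. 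Hence $G(V) \in \mathcal{X}(\lambda)$. Frobenius reciprocity, together with the vanishing of $M^{\lambda+\alpha}$ for $M \in \mathcal{X}(\lambda)$, yields the adjunction
\[
\mathrm{Hom}_{\mathfrak{g}}(G(V), M) \cong \mathrm{Hom}_{\overline{\mathfrak{b}}}(V, M) \cong \mathrm{Hom}_R(V, F(M)).
\]

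To finish, it remains to check that the unit and counit of this adjunction are isomorphisms. The unit $V \to FG(V) = G(V)^\lambda$ is the identity by PBW. For the counit $\epsilon_M \colon GF(M) \to M$, its restriction to the $\lambda$-weight space sends $v \in M^\lambda \subset GF(M)$ to $1 \cdot v = v$, so $F(\epsilon_M)$ is the identity on $F(M)$; by exactness of $F$, we then have $F(\ker \epsilon_M) = 0$ and $F(\mathrm{coker}\,\epsilon_M) = 0$. What I expect to be the main obstacle is the faithfulness of $F$, namely the implication that any non-zero $M \in \mathcal{X}(\lambda)$ must have $M^\lambda \neq 0$. I would prove this by combining Proposition~\ref{prop7} and Theorem~\ref{thm9} to conclude that every composition factor of any $M \in \mathcal{X}(\lambda)$ is isomorphic to $L(\lambda)$, and then invoking additivity of characters on short exact sequences together with $\dim L(\lambda)^\lambda = 1$ to derive the equality $\dim M^\lambda = \mathrm{length}(M)$. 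In particular, every object of $\mathcal{X}(\lambda)$ has finite length, and any non-zero such $M$ contains a simple submodule isomorphic to $L(\lambda) = \Delta(\lambda)$, contributing a non-zero vector to $M^\lambda$. With faithfulness in hand, both $\ker \epsilon_M$ and $\mathrm{coker}\,\epsilon_M$ vanish, so $\epsilon_M$ is an isomorphism, and the equivalences in~(a) and~(b) follow.
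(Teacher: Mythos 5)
Your proposal is correct and takes essentially the same route as the paper: there, too, $F(M)=M^{\lambda}$ with $x=\overline{h}-\lambda(\overline{h})$ (and $y=h-\lambda(h)$ in the thick case) and $G$ is parabolic induction, with the hom-tensor adjunction making them mutually quasi-inverse. You merely spell out the unit/counit verification and the faithfulness of $F$ (via the block structure from Theorem~\ref{thm9} and Proposition~\ref{prop7}), steps the paper dismisses as following "immediately from the definitions."
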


\begin{proof}
Set $x:=\overline{h}-\lambda(\overline{h})$. Then, for any
$M\in \mathcal{O}(\lambda)$, the finite dimensional vector space 
$M^{\lambda}$ is naturally a $\mathbb{C}[[x]]$-module. Moreover, the
functor $F$ sending $M$ to $M^{\lambda}$ and the parabolic induction
functor $G$ are, by the usual hom-tensor adjunction, a pair of adjoint
functors between $\mathcal{O}(\lambda)$ and
the category of finite dimensional $\mathbb{C}[[x]]$-modules.
From the definitions, it follows immediately that they are 
each others quasi inverses, proving claim~\eqref{thm15.1}.

Claim~\eqref{thm15.2} is proved similarly, with 
$x:=\overline{h}-\lambda(\overline{h})$ and
$y:={h}-\lambda({h})$. 
\end{proof}

Recall that the {\em Gabriel quiver} of a block is a directed graph whose 
\begin{itemize}
\item vertices are isomorphism classes of simple objects in the block;
\item the number of arrows from a vertex $L$ to a vertex $S$ equals
the dimension of $\mathrm{Ext}^1(L,S)$.
\end{itemize}
As an immediate consequence of Theorem~\ref{thm15}, we obtain:

\begin{corollary}\label{cor16}
For $\lambda\in\overline{\mathfrak{h}}^*_1$, we have:
\begin{enumerate}[$($a$)$]
\item\label{cor15.1} The Gabriel quiver of $\mathcal{O}(\lambda)$ is:\hspace{2mm} 
$\xymatrix{\bullet\ar@(ur,dr){}}$
\item\label{cor15.2} The  Gabriel quiver of $\widetilde{\mathcal{O}}(\lambda)$ is: 
\hspace{5mm} $\xymatrix{\bullet\ar@(ur,dr){}\ar@(ul,dl){}}$
\end{enumerate}
\end{corollary}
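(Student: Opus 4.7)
The plan is to invoke Theorem~\ref{thm15} directly and reduce the statement to a computation of self-extensions of the unique simple module in each of the two module categories over power series rings. Since an equivalence of abelian categories preserves Ext groups in every degree, it suffices to determine the Gabriel quivers of the categories $R\text{-mod}_{\mathrm{fd}}$ with $R = \mathbb{C}[[x]]$ and $S\text{-mod}_{\mathrm{fd}}$ with $S = \mathbb{C}[[x,y]]$.

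For claim~\eqref{cor15.1}, the ring $R = \mathbb{C}[[x]]$ is local with maximal ideal $(x)$ and residue field $\mathbb{C}$, so up to isomorphism there is exactly one simple finite-dimensional $R$-module, namely $\mathbb{C} = R/(x)$; this gives the single vertex. I would then compute $\mathrm{Ext}^1_R(\mathbb{C},\mathbb{C})$ from the minimal free resolution
\begin{displaymath}
0 \to R \xrightarrow{\cdot x} R \to \mathbb{C} \to 0.
\end{displaymath}
Applying $\mathrm{Hom}_R(-,\mathbb{C})$ yields the two-term complex $\mathbb{C} \xrightarrow{0} \mathbb{C}$, since $x$ acts as zero on $\mathbb{C}$. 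Hence $\mathrm{Ext}^1_R(\mathbb{C},\mathbb{C}) \cong \mathbb{C}$ is one-dimensional, producing exactly one loop.

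Claim~\eqref{cor15.2} is handled by the same argument with $S = \mathbb{C}[[x,y]]$, which is again local with residue field $\mathbb{C}$, giving a single vertex. I would compute $\mathrm{Ext}^1_S(\mathbb{C},\mathbb{C})$ using the start of the Koszul resolution
\begin{displaymath}
S^{2} \xrightarrow{(a,b)\,\mapsto\, ax+by} S \to \mathbb{C} \to 0.
\end{displaymath}
After applying $\mathrm{Hom}_S(-,\mathbb{C})$, both differentials become zero, so $\mathrm{Ext}^1_S(\mathbb{C},\mathbb{C}) \cong \mathbb{C}^{2}$, yielding the two loops.

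I do not anticipate a real obstacle. The one mild technicality is verifying that $\mathrm{Ext}^{1}$ computed inside the subcategory of finite-dimensional modules agrees with $\mathrm{Ext}^{1}$ computed in the full module category; this is immediate from the Yoneda description since any extension of two finite-dimensional modules is itself finite-dimensional. With that noted, everything else is a direct application of Theorem~\ref{thm15} and a standard homological computation.
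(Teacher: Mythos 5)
Your proposal is correct and follows exactly the route the paper intends: the paper derives Corollary~\ref{cor16} as an immediate consequence of Theorem~\ref{thm15}, leaving implicit precisely the standard computation of $\mathrm{Ext}^1$ of the residue field over $\mathbb{C}[[x]]$ and $\mathbb{C}[[x,y]]$ that you spell out via the Koszul resolutions. Your remark that $\mathrm{Ext}^1$ in the finite-dimensional subcategory agrees with $\mathrm{Ext}^1$ in the ambient module category (since the subcategory is closed under extensions) is a correct and worthwhile detail to record.
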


\subsection{Partial simple preserving duality}\label{s5.15}

Denote by $\sigma$ the anti-involution of $\mathfrak{g}$ swapping
$e$ with $f$, and $\overline{e}$ with $\overline{f}$. Note that
$\sigma(h)=h$. 

Let $\mathcal{X}\in\{\widetilde{\mathcal{O}},\mathcal{O}\}$.
Denote by $\mathcal{X}_{\mathrm{fl}}$ the full subcategory of 
$\mathcal{X}$ consisting of modules of finite length. 

For $M\in \mathcal{X}_{\mathrm{fl}}$, we can define on
\begin{displaymath}
M^{\star}:=\bigoplus_{\lambda\in \overline{\mathfrak{h}}^*}
\mathrm{Hom}_{\mathbb{C}}(M^{\lambda},\mathbb{C})
\end{displaymath}
the structure of a $\mathfrak{g}$-module via 
$(a\cdot f)(m):=f(\sigma(a)m)$. Then $M\mapsto M^{\star}$
is a contravariant and involutive self-equivalence of
$\mathcal{X}_{\mathrm{fl}}$. From $\sigma(h)=h$, it follows
that $\mathbf{ch}(M)=\mathbf{ch}(M^{\star})$. In particular,
as simple modules in $\mathcal{X}$ are uniquely determined by their 
characters, it follows that $L(\lambda)^{\star}\cong L(\lambda)$,
for all $\lambda\in \overline{\mathfrak{h}}^*$. In other words,
the {\em duality} $\star$ is simple preserving.

\begin{corollary}\label{cor19}
For all  $\mathcal{X}\in\{\widetilde{\mathcal{O}},\mathcal{O}\}$
and $\lambda,\mu\in \overline{\mathfrak{h}}^*$, we have
\begin{displaymath}
\mathrm{Ext}_{\mathcal{X}}^1(L(\lambda),L(\mu))\cong
\mathrm{Ext}_{\mathcal{X}}^1(L(\mu),L(\lambda)).
\end{displaymath}
\end{corollary}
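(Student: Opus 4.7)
The plan is to exploit the contravariant self-equivalence $\star$ of $\mathcal{X}_{\mathrm{fl}}$ constructed just above: it is exact, involutive, and fixes each simple object, that is, $L(\nu)^{\star}\cong L(\nu)$ for every $\nu\in\overline{\mathfrak{h}}^*$. Any such involutive duality on an abelian category automatically induces isomorphisms on $\mathrm{Ext}^1$ swapping the two arguments, and the present statement is just an instance of this principle.

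First I would observe that, since $L(\lambda)$ and $L(\mu)$ are simple, any extension $0\to L(\mu)\to E\to L(\lambda)\to 0$ in $\mathcal{X}$ has length at most two, so $E\in\mathcal{X}_{\mathrm{fl}}$, and similarly for extensions in the opposite direction. Consequently, both $\mathrm{Ext}^1_{\mathcal{X}}(L(\lambda),L(\mu))$ and $\mathrm{Ext}^1_{\mathcal{X}}(L(\mu),L(\lambda))$ are described by Yoneda equivalence classes of short exact sequences lying entirely inside $\mathcal{X}_{\mathrm{fl}}$, so restricting to the subcategory on which $\star$ is defined is harmless.

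Next, I would apply $\star$ to a representative $0\to L(\mu)\to E\to L(\lambda)\to 0$ to obtain $0\to L(\lambda)^{\star}\to E^{\star}\to L(\mu)^{\star}\to 0$, which via the isomorphisms $L(\nu)^{\star}\cong L(\nu)$ represents an element of $\mathrm{Ext}^1_{\mathcal{X}}(L(\mu),L(\lambda))$. Functoriality of $\star$ makes this assignment well-defined on Yoneda equivalence classes, while involutivity of $\star$ provides the inverse map, yielding a bijection. Additivity of $\star$ (which holds for any self-equivalence) together with the interchange of pushouts and pullbacks under a contravariant exact functor ensures that this bijection is compatible with the Baer sum, upgrading it to an isomorphism of abelian groups.

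The main potential obstacle is purely bookkeeping: one must check that the chosen identifications $L(\nu)^{\star}\cong L(\nu)$ combine coherently with the naturality of $\star$ so that the induced map on Yoneda classes is well-defined and does not depend on the representative. This is a standard verification for simple-preserving dualities on finite length abelian categories and requires no new ideas beyond the setup of Subsection~\ref{s5.15}.
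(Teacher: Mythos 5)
Your proposal is correct and is essentially the paper's own argument: the paper proves this corollary in one line by applying the simple-preserving duality $\star$ from Subsection~\ref{s5.15}. Your additional observation that any such extension has length two and hence lies in $\mathcal{X}_{\mathrm{fl}}$, where $\star$ is defined, is a worthwhile detail that the paper leaves implicit.
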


\begin{proof}
The left hand side of the equality is obtained from the right hand
side by applying the simple preserving duality $\star$.
\end{proof}

We note that $\star$ does not extend to the whole of $\mathcal{X}$
as $\star$ messes up the property of being finitely generated.
For example, for an infinite length Verma module $\Delta(\lambda)\in \mathcal{X}$
as in Subsection~\ref{s6.1}, the module $\Delta(\lambda)^{\star}$
is not finitely generated and hence does not belong to $\mathcal{X}$.

\subsection{Difficult non-integral blocks}\label{s5.2}

\begin{proposition}\label{prop22}
Assume that $\lambda\in\overline{\mathfrak{h}}^*$ is such that 
$\lambda(\overline{h})=0$ and $\lambda(h)\notin\mathbb{Z}$. Then,
for $\mu\in\lambda+\mathbb{Z}\alpha$, we have
\begin{displaymath}
\mathrm{Ext}_{\mathcal{O}}^1(L(\lambda),L(\mu))\cong
\begin{cases}
\mathbb{C}, &  \text{if }\mu=\lambda;\\
\mathbb{C},& \text{if }\mu=\lambda\pm\alpha;\\ 
0,& \text{otherwise}. 
\end{cases}
\end{displaymath}
\end{proposition}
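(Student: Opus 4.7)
My plan is to use Corollary~\ref{cor19} to reduce to $\mu=\lambda-n\alpha$ for $n\geq 0$, and then to analyse any extension $0\to L(\mu)\to M\to L(\lambda)\to 0$ by studying $M^{\lambda}$ and invoking Corollary~\ref{cor52}. The hypotheses $\nu(\overline{h})=0$ and $\nu(h)\notin\mathbb{Z}$ are preserved under the shift $\nu\mapsto\nu+k\alpha$, so by Corollary~\ref{cor19} the case $\mu=\lambda+n\alpha$ follows from the case $\mu=\lambda-n\alpha$ (applied with $\lambda$ and $\mu$ swapped), and only downward cases need to be treated.

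For $n\geq 1$: since $L(\mu)^{\lambda}=0$, the space $M^{\lambda}$ is one-dimensional. Any $v\in M^{\lambda}$ mapping to the generator of $L(\lambda)$ is annihilated by $\overline{\mathfrak{n}}_+$ (because $M^{\lambda+\alpha}=0$) and by $\overline{h}$ (because $\overline{h}$ acts nilpotently on the one-dimensional generalized weight space $M^{\lambda}$), so by adjunction it induces a non-zero homomorphism $\Delta(\lambda)\to M$. Its image $N$ meets $L(\mu)$ either trivially, forcing the sequence to split, or in all of $L(\mu)$, forcing $N=M$ and making $M$ a length-two quotient of $\Delta(\lambda)$. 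By Corollary~\ref{cor52} the only such quotient is $\Delta(\lambda)/\Delta(\lambda-2\alpha)$, whose composition factors are $L(\lambda)$ and $L(\lambda-\alpha)$; this is possible only for $n=1$, and then pins down $M$ up to isomorphism. Hence $\mathrm{Ext}^{1}_{\mathcal{O}}(L(\lambda),L(\lambda-n\alpha))=0$ for $n\geq 2$, while $\mathrm{Ext}^{1}_{\mathcal{O}}(L(\lambda),L(\lambda-\alpha))=\mathbb{C}$, realized by $\Delta(\lambda)/\Delta(\lambda-2\alpha)$.

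For $n=0$: $M^{\lambda}$ is two-dimensional, $h$ acts as the scalar $\lambda(h)$, and $\overline{h}$ acts nilpotently, so $\overline{h}|_{M^{\lambda}}$ is either zero or a single Jordan block. In the former case the argument above applies to any $v\in M^{\lambda}$ outside $L(\mu)$, and the length-two-quotient alternative is now excluded because $\Delta(\lambda)/\Delta(\lambda-2\alpha)$ has $L(\lambda-\alpha)\neq L(\lambda)$ as a constituent, so the sequence splits. In the Jordan block case, choose $w\in M^{\lambda}$ with $\overline{h}w=v$, where $v$ generates the submodule $L(\mu)=L(\lambda)$. Using $[e,\overline{f}]=\overline{h}$, $ew=0$, and $\overline{f}w\in L(\mu)^{\lambda-\alpha}=\mathbb{C}\cdot fv$, one obtains $\overline{f}w=\lambda(h)^{-1}fv$; invertibility of $\lambda(h)$ follows from $\lambda(h)\notin\mathbb{Z}$. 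This relation together with the $\mathfrak{g}$-action on $L(\mu)$ and $L(\lambda)$ determines the whole module structure on $M$, and a direct computation shows that the resulting space of Yoneda classes of non-split self-extensions is one-dimensional, so $\mathrm{Ext}^{1}_{\mathcal{O}}(L(\lambda),L(\lambda))=\mathbb{C}$.

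The hard part will be the uniqueness statement in the self-extension case: one must verify that the formula $\overline{f}w=\lambda(h)^{-1}fv$ really extends, consistently with all commutation relations of $\mathfrak{g}$ and via PBW, to a $\mathfrak{g}$-module structure on $M$, and that there is no hidden parameter beyond the Jordan block entry, so that Yoneda classes of non-split self-extensions form precisely $\mathbb{C}$. A short commutator bookkeeping using the abelianness of the ideal $\overline{\mathfrak{g}}$ should handle this, but care is needed to pin down both the existence and the uniqueness at once.
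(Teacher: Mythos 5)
Your treatment of the cases $\mu\neq\lambda$ is sound and essentially the paper's argument: reduce to $\mu=\lambda-n\alpha$ via Corollary~\ref{cor19}, observe that a non-split extension forces $M$ to be a length-two quotient of $\Delta(\lambda)$, and invoke the uniseriality of $\Delta(\lambda)$ from Corollary~\ref{cor52} to conclude that only $n=1$ survives and that the middle term is then $\Delta(\lambda)/\Delta(\lambda-2\alpha)$, unique up to isomorphism. Your analysis of the self-extension case is also structurally correct up to the point you yourself flag: you derive the necessary relations ($\overline{h}w=v$, $ew=\overline{e}w=0$, $\overline{f}w=\lambda(h)^{-1}fv$) and correctly argue that these pin down $M$ up to isomorphism, which bounds $\dim\mathrm{Ext}^1_{\mathcal{O}}(L(\lambda),L(\lambda))$ by one.

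The genuine gap is the existence half of the self-extension case, which you explicitly defer as ``the hard part'': you never verify that the prescribed action extends consistently to a $\mathfrak{g}$-module, so as written you have only shown $\dim\mathrm{Ext}^1_{\mathcal{O}}(L(\lambda),L(\lambda))\leq 1$. Checking all commutation relations on the basis $\{f^iv,f^iw\}$ by hand is feasible but error-prone, and this is precisely where the paper takes a different, cleaner route: it forms the parabolically induced module $\Delta(M^{\lambda})=U(\mathfrak{g})\otimes_{U(\overline{\mathfrak{b}})}M^{\lambda}$, whose existence is automatic, and then exhibits the explicit submodule $K$ generated by $\overline{f}w$ and $\lambda(h)\overline{f}v-fw$ (both killed by $e$ and $\overline{e}$), using Proposition~\ref{prop51} to see that $\Delta(M^{\lambda})/K$ has length two with both factors $L(\lambda)$; uniqueness of $K$ is immediate from $\mathbf{k}_{\lambda}(\Delta(M^{\lambda}))=2$. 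I recommend you replace your planned ``direct verification of the module axioms'' by this induction argument --- it converts the existence problem into the much easier problem of locating a submodule inside a module that already exists. One further small point worth making explicit in either approach: passing from ``the middle term is unique up to isomorphism'' to ``$\dim\mathrm{Ext}^1=1$'' uses that for an indecomposable length-two module the extension classes it realizes form a single $\mathbb{C}^{\times}$-orbit; equivalently, one can note that $[M]\mapsto[M^{\lambda}]$ is an injective linear map into the one-dimensional space of $h$-semisimple self-extensions of $\mathbb{C}_{\lambda}$ over $U(\overline{\mathfrak{h}})$.
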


\begin{proof}
By Corollary~\ref{cor19}, without loss of generality we may assume that 
$\mu=\lambda-k\alpha$, for some $k\in\mathbb{Z}_{\geq 0}$.
Let 
\begin{equation}\label{eq22-1}
0\to L(\mu)\to M\to L(\lambda)\to 0 
\end{equation}
be a short exact sequence in $\mathcal{O}$.

Assume first that $k>0$ and that \eqref{eq22-1} does not split. In this case 
$M$ must be generated by $M^{ \lambda}$ and hence, by adjunction, is a quotient 
of the Verma module $\Delta(\lambda)$. Under the assumptions
$\lambda(\overline{h})=0$ and $\lambda(h)\notin\mathbb{Z}$, all submodules
of $\Delta(\lambda)$ are described in Corollary~\ref{cor52}. Out of all
possible quotients of $\Delta(\lambda)$, only  the
quotient $\Delta(\lambda)/\Delta(\lambda-2\alpha)$ has length two.
This quotient has composition subquotients $L(\lambda)$ and
$L(\lambda-\alpha)$. This implies that 
\begin{displaymath}
\mathrm{Ext}_{\mathcal{O}}^1(L(\lambda),L(\lambda-\alpha))\cong\mathbb{C}
\qquad\text{ and }\qquad
\mathrm{Ext}_{\mathcal{O}}^1(L(\lambda),L(\lambda-k\alpha))=0,
\text{ for }k>1.
\end{displaymath}

It remains to compute $\mathrm{Ext}_{\mathcal{O}}^1(L(\lambda),L(\lambda))$.
Consider a non-split short exact sequence \eqref{eq22-1}
in $\mathcal{O}$, with $\lambda=\mu$. The vector space 
$M^{\lambda}$ is, naturally, a $U(\overline{\mathfrak{h}})$-module.
If this module were semi-simple, by adjunction there would exist
two linearly independent homomorphisms from $\Delta(\lambda)$
to $M$ and hence \eqref{eq22-1} would be split. Therefore
$M^{\lambda}$ must be an indecomposable 
$U(\overline{\mathfrak{h}})$-module. As $h$ is supposed to act 
diagonalizably,
such module $M^{\lambda}$ is unique, up to isomorphism.
In particular, there is a basis $\{v,w\}$ of $M^{\lambda}$ such that
the matrix of the action of $\overline{h}$ in this basis is
\begin{displaymath}
\left(\begin{array}{cc}0&0\\1&0\end{array}\right).
\end{displaymath}
Consider now the module $\displaystyle
\Delta(M^{\lambda}):=U(\mathfrak{g})
\bigotimes_{U(\overline{\mathfrak{b}})}M^{\lambda}$,
where  $\overline{\mathfrak{n}}_+M^{\lambda}=0$.
By adjunction, $\Delta(M^{\lambda})$ surjects onto
$M$. Hence, we just need to check how many 
submodules $K$ of $\Delta(M^{\lambda})$ have the
property that $\Delta(M^{\lambda})/K$ has length 
two with both composition subquotients isomorphic to 
$L(\lambda)$. We claim that such submodule is unique,
which implies that 
$\mathrm{Ext}_{\mathcal{O}}^1(L(\lambda),L(\lambda))\cong\mathbb{C}$.
In fact, since  $\mathbf{k}_{\lambda}(\Delta(M^{\lambda}))=2$
by construction, the uniqueness of $K$, provided that $K$ exists, is clear.

To prove existence, we consider the submodule $K$ of 
$\Delta(M^{\lambda})$ generated by $\overline{f}w$ and
$\lambda(h)\overline{f}v-fw$ (note that $\lambda(h)\neq 0$ 
by our assumptions). It is easy to check that both these vectors
are annihilated by $e$ and $\overline{e}$. The vector $\overline{f}w$
generates a submodule of $\Delta(M^{\lambda})$ isomorphic to 
$\Delta(\lambda-\alpha)$. The image of $\lambda(h)\overline{f}v-fw$
in the quotient $\Delta(M^{\lambda})/\Delta(\lambda)$ generates
in this quotient a submodule isomorphic to $\Delta(\lambda-\alpha)$.
Therefore, from Proposition~\ref{prop51} it follows that 
$\Delta(M^{\lambda})/K$ indeed has length two with both simple
subquotients isomorphic to $L(\lambda)$. The claim follows.
\end{proof}

As an immediate corollary from Proposition~\ref{prop22}, we have:

\begin{corollary}\label{cor23}
Assume that $\lambda\in\overline{\mathfrak{h}}^*$ is such that 
$\lambda(\overline{h})=0$ and $\lambda(h)\notin\mathbb{Z}$. Then,
for $\xi:=\lambda+\mathbb{Z}\alpha$, the Gabriel quiver of 
$\mathcal{O}(\xi)$ has the form:
\vspace{1mm}

\begin{displaymath}
\xymatrix{
\dots\ar@/^1pc/[rr]&&
\lambda-\alpha \ar@(ul,ur)[]\ar@/^1pc/[rr]\ar@/^1pc/[ll]&& 
\lambda \ar@(ul,ur)[]\ar@/^1pc/[rr]\ar@/^1pc/[ll]&&
\lambda+\alpha \ar@(ul,ur)[]\ar@/^1pc/[rr]\ar@/^1pc/[ll]&&
\dots\ar@/^1pc/[ll]
}
\end{displaymath}
\end{corollary}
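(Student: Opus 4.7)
The plan is to unpack the definition of the Gabriel quiver and apply Proposition~\ref{prop22} vertex-by-vertex in the block $\mathcal{O}(\xi)$.

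First I would identify the vertex set. By Proposition~\ref{prop3}\eqref{prop3.2} together with the defining property of $\mathcal{O}(\xi)$ (all supports contained in $\xi$), the isomorphism classes of simple objects of $\mathcal{O}(\xi)$ are exactly $\{L(\mu)\,:\,\mu\in\xi\}$. This realizes the vertex set of the Gabriel quiver as the coset $\xi=\lambda+\mathbb{Z}\alpha$, which we draw as a bi-infinite chain labelled by $\dots,\lambda-\alpha,\lambda,\lambda+\alpha,\dots$.

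Next I would count arrows. Fix $\mu\in\xi$. Since $\mu=\lambda+k\alpha$ for some $k\in\mathbb{Z}$, we have $\mu(\overline{h})=\lambda(\overline{h})=0$ and $\mu(h)=\lambda(h)+2k\notin\mathbb{Z}$; hence $\mu$ satisfies the hypotheses of Proposition~\ref{prop22}. Applying that proposition \emph{with $\mu$ in place of $\lambda$} and any $\mu'\in\mu+\mathbb{Z}\alpha=\xi$ yields
\begin{displaymath}
\dim\mathrm{Ext}^1_{\mathcal{O}}(L(\mu),L(\mu'))=
\begin{cases}
1,& \mu'\in\{\mu,\mu-\alpha,\mu+\alpha\};\\
0,& \text{otherwise}.
\end{cases}
\end{displaymath}
By definition of the Gabriel quiver this produces exactly one loop at each vertex $\mu$ and exactly one arrow from $\mu$ to each of its two neighbors $\mu\pm\alpha$. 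The arrows going the other way, from $\mu\pm\alpha$ back to $\mu$, can be read off either from the same application of Proposition~\ref{prop22} at the adjacent vertices, or more economically from the symmetry $\mathrm{Ext}^1_{\mathcal{O}}(L(\mu),L(\mu'))\cong\mathrm{Ext}^1_{\mathcal{O}}(L(\mu'),L(\mu))$ of Corollary~\ref{cor19}. Assembling this data reproduces precisely the displayed quiver.

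There is no substantive obstacle here; the corollary is essentially a graph-theoretic repackaging of Proposition~\ref{prop22}. The only point worth making explicit is that the hypothesis $\lambda(\overline{h})=0$, $\lambda(h)\notin\mathbb{Z}$ is preserved under replacing $\lambda$ by any $\mu\in\xi$, so Proposition~\ref{prop22} can be applied uniformly at every vertex of the block.
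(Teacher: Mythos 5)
Your proposal is correct and matches the paper's reasoning: the paper states Corollary~\ref{cor23} as an immediate consequence of Proposition~\ref{prop22}, and your argument is simply the careful unpacking of that deduction (vertex set from Proposition~\ref{prop3}, arrow counts from Proposition~\ref{prop22} applied at each $\mu\in\xi$, noting the hypotheses are stable under $\lambda\mapsto\lambda+k\alpha$). Nothing is missing.
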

\vspace{1mm}

Now we can proceed to $\widetilde{\mathcal{O}}$.

\begin{proposition}\label{prop24}
Assume that $\lambda\in\overline{\mathfrak{h}}^*$ is such that 
$\lambda(\overline{h})=0$ and $\lambda(h)\notin\mathbb{Z}$. Then,
for $\mu\in\lambda+\mathbb{Z}\alpha$, we have
\begin{displaymath}
\mathrm{Ext}_{\widetilde{\mathcal{O}}}^1(L(\lambda),L(\mu))\cong
\begin{cases}
\mathbb{C}^2, &  \text{if }\mu=\lambda;\\
\mathbb{C},& \text{if }\mu=\lambda\pm\alpha;\\ 
0,& \text{otherwise}. 
\end{cases}
\end{displaymath}
\end{proposition}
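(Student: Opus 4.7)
The plan is to follow the proof of Proposition~\ref{prop22} closely, modifying only the case $\mu=\lambda$ where dropping the diagonalizability of $h$ admits additional extensions. By Corollary~\ref{cor19} it suffices to treat $\mu=\lambda-k\alpha$ for $k\geq 0$. For $k\geq 1$ the argument of Proposition~\ref{prop22} transfers verbatim: since $L(\lambda-k\alpha)^\lambda=0$, the weight space $M^\lambda$ of any extension is one-dimensional, so $M$ is automatically a quotient of $\Delta(\lambda)$, and Corollary~\ref{cor52} immediately yields $\mathbb{C}$ when $k=1$ and $0$ when $k\geq 2$.

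The novelty is therefore entirely in the case $\mu=\lambda$. Consider a non-split extension $0\to L(\lambda)\to M\to L(\lambda)\to 0$ in $\widetilde{\mathcal{O}}$. Using $[h,e]=2e$ and $[h,\overline{e}]=2\overline{e}$, one checks that $\overline{\mathfrak{n}}_+$ must act trivially on the two-dimensional space $M^\lambda$, so $M^\lambda$ is naturally an extension of $\mathbb{C}_\lambda$ by $\mathbb{C}_\lambda$ in the category of $\overline{\mathfrak{h}}$-modules. Functoriality then yields a natural linear map
\begin{displaymath}
F:\mathrm{Ext}^1_{\widetilde{\mathcal{O}}}(L(\lambda),L(\lambda))\longrightarrow \mathrm{Ext}^1_{\overline{\mathfrak{h}}}(\mathbb{C}_\lambda,\mathbb{C}_\lambda)\cong \mathbb{C}^2,
\end{displaymath}
where the target is identified with the cotangent space $\mathbf{m}_{\lambda}/\mathbf{m}_{\lambda}^2$ of $U(\overline{\mathfrak{h}})=\mathbb{C}[h,\overline{h}]$ at $\lambda$. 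I aim to show $F$ is an isomorphism. Injectivity is easy: if $M^\lambda$ is semisimple as an $\overline{\mathfrak{h}}$-module, it contains two linearly independent highest weight vectors which, by adjunction, yield two linearly independent maps $\Delta(\lambda)\to M$; as $M$ has length two with both composition factors $L(\lambda)$, the images must both equal the sub $L(\lambda)\subset M$, and together these maps split the extension.

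For surjectivity, parameterize the indecomposable classes in $\mathrm{Ext}^1_{\overline{\mathfrak{h}}}(\mathbb{C}_\lambda,\mathbb{C}_\lambda)$ by pairs $(a,b)\in \mathbb{C}^2\setminus\{(0,0)\}$: let $V_{a,b}$ have basis $\{v,w\}$ with $w$ spanning the sub, $(h-\lambda(h))v=aw$, and $\overline{h}v=bw$. I realize each such class by taking the quotient $\Delta(V_{a,b})/K$, where $\Delta(V_{a,b}):=\mathrm{Ind}_{U(\overline{\mathfrak{b}})}^{U(\mathfrak{g})}(V_{a,b})$ and $K$ is the submodule generated by $\overline{f}w$ together with
\begin{displaymath}
\eta:=\overline{f}v-\frac{b}{\lambda(h)}\,fw,
\end{displaymath}
which is well-defined because $\lambda(h)\neq 0$. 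The coefficient $-b/\lambda(h)$ is forced by requiring $e\eta\in K$; straightforward computations then give $\overline{e}\eta=0$, while $h\eta$ and $\overline{h}\eta$ are $\mathbb{C}$-linear combinations of $\eta$ and elements of $U(\mathfrak{g})\overline{f}w$. In the quotient $\Delta(V_{a,b})/U(\mathfrak{g})\overline{f}w$, the image of $\eta$ becomes a genuine highest weight vector of weight $\lambda-\alpha$, generating a copy of $\Delta(\lambda-\alpha)$ complementary to $\Delta(\lambda)/\Delta(\lambda-\alpha)=L(\lambda)$; uniqueness of this complement---and hence of the full extension---follows from $\mathrm{Hom}(\Delta(\lambda-\alpha),L(\lambda))=0$. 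The main obstacle is the bookkeeping: verifying that $K\cap U(\mathfrak{g})w=U(\mathfrak{g})\overline{f}w$ inside $\Delta(V_{a,b})$, which is precisely what forces the displayed form of $\eta$.
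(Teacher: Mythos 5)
Your argument is correct and follows essentially the same route as the paper: reduce $\mu\neq\lambda$ to Proposition~\ref{prop22}, and for $\mu=\lambda$ identify self-extensions of $L(\lambda)$ with self-extensions of $\mathbb{C}_{\lambda}$ over $U(\overline{\mathfrak{h}})$ via the generalized $\lambda$-weight space, realizing each class as $\Delta(V)/K$ with $K$ generated by $\overline{f}w$ and a corrected vector $\eta$. Your normalization $\eta=\overline{f}v-\tfrac{b}{\lambda(h)}fw$ is the right one (it is consistent with the element $\lambda(h)\overline{f}v-fw$ used in the paper's proof of Proposition~\ref{prop22}, and it handles the cases $b=0$ and $b\neq 0$ uniformly).
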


\begin{proof}
The case $\mu\neq\lambda$ is proved by exactly the same 
arguments as in Proposition~\ref{prop22}. The case
$\mu=\lambda$ is also similar, but requires some small adjustments
which we describe below.

Consider a non-split short exact sequence \eqref{eq22-1}
in $\widetilde{\mathcal{O}}$, with $\lambda=\mu$. The vector space 
$M^{\lambda}$ is an indecomposable $U(\overline{\mathfrak{h}})$-module
of length two, namely, a self-extension of the simple
$U(\overline{\mathfrak{h}})$-module $\mathbb{C}_{\lambda}$
corresponding to $\lambda$. The space of such self-extensions 
is two-dimensional 
(as $\overline{\mathfrak{h}}$ is two-dimensional). In fact, 
using the arguments as in the proof of Proposition~\ref{prop22}, 
we can show that parabolic induction, followed by taking a 
canonical quotient, defines a surjective map from
$\mathrm{Ext}^1_{U(\overline{\mathfrak{h}})}
(\mathbb{C}_{\lambda},\mathbb{C}_{\lambda})$ to
$\mathrm{Ext}_{\mathcal{O}}^1(L(\lambda),L(\lambda))$
which sends isomorphic module to isomorphic and non-isomorphic
modules to non-isomorphic (the latter claim is obvious by 
restricting the action to the generalized $\lambda$-weight space). 
This, clearly, implies the necessary
claim. Here are the details.

There is a basis $\{v,w\}$ of $M^{\lambda}$ such that
the matrices of the action of $h$ and $\overline{h}$ in this basis are
\begin{displaymath}
\left(\begin{array}{cc}\lambda(h)&0\\p&\lambda(h)\end{array}\right)
\qquad\text{ and }\qquad
\left(\begin{array}{cc}0&0\\q&0\end{array}\right),
\end{displaymath}
respectively, where $p$ and $q$ are complex numbers at least one of which
is non-zero. Similarly to the proof of Proposition~\ref{prop22},
one shows that the submodule $K$ of $\Delta(M^{\lambda})$
generated by $\overline{f}w$ and
$\frac{\lambda(h)}{q}\overline{f}v-fw$, in case $q\neq 0$,
or $\overline{f}v$, in case $q=0$, is the unique submodule of
$\Delta(M^{\lambda})$ such that $\Delta(M^{\lambda})/K$
is isomorphic to $M$. The claim follows.
\end{proof}

As an immediate corollary from Proposition~\ref{prop24}, we have:

\begin{corollary}\label{cor25}
Assume that $\lambda\in\overline{\mathfrak{h}}^*$ is such that 
$\lambda(\overline{h})=0$ and $\lambda(h)\notin\mathbb{Z}$. Then,
for $\xi:=\lambda+\mathbb{Z}\alpha$, the Gabriel quiver of 
$\widetilde{\mathcal{O}}(\xi)$ has the form:
\vspace{1mm}

\begin{displaymath}
\xymatrix{
\dots\ar@/^1pc/[rr]&&
\lambda-\alpha \ar@(ul,ur)[]\ar@(dl,dr)[]\ar@/^1pc/[rr]\ar@/^1pc/[ll]&& 
\lambda \ar@(ul,ur)[]\ar@(dl,dr)[]\ar@/^1pc/[rr]\ar@/^1pc/[ll]&&
\lambda+\alpha \ar@(ul,ur)[]\ar@(dl,dr)[]\ar@/^1pc/[rr]\ar@/^1pc/[ll]&&
\dots\ar@/^1pc/[ll]
}
\end{displaymath}
\end{corollary}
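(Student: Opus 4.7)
The plan is to read off the Gabriel quiver directly from Proposition~\ref{prop24}. By Theorem~\ref{thm9}, the block $\widetilde{\mathcal{O}}(\xi)$ for $\xi = \lambda + \mathbb{Z}\alpha$ has simple objects indexed exactly by $\{L(\nu) : \nu \in \xi\}$, so the vertices of the Gabriel quiver are $\{\lambda + k\alpha : k \in \mathbb{Z}\}$, as in the displayed picture.

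Next, I would observe that every $\nu = \lambda + k\alpha \in \xi$ again satisfies the hypotheses of Proposition~\ref{prop24}: we have $\nu(\overline{h}) = \lambda(\overline{h}) = 0$ and $\nu(h) = \lambda(h) + 2k \notin \mathbb{Z}$, since $\lambda(h) \notin \mathbb{Z}$. Applying Proposition~\ref{prop24} at each such $\nu$ (rather than only at the fixed $\lambda$) immediately gives
\[
\dim \mathrm{Ext}^1_{\widetilde{\mathcal{O}}}(L(\nu), L(\nu)) = 2, \qquad \dim \mathrm{Ext}^1_{\widetilde{\mathcal{O}}}(L(\nu), L(\nu \pm \alpha)) = 1,
\]
and $\dim \mathrm{Ext}^1_{\widetilde{\mathcal{O}}}(L(\nu), L(\nu + m\alpha)) = 0$ for all $m \in \mathbb{Z}$ with $|m| \geq 2$. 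By the definition of the Gabriel quiver, the first equality produces two loops at the vertex $\nu$, the second equality produces a single arrow from $\nu$ to each of its two neighbors $\nu \pm \alpha$, and the third guarantees there are no further arrows between vertices of the same block. This exactly matches the depicted quiver.

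Since Proposition~\ref{prop24} does all the work, there is no real obstacle here; the only thing to check beyond a direct quotation is that the hypotheses transfer to every $\nu \in \xi$, which is immediate. No appeal to Corollary~\ref{cor19} is needed for the structural picture, but it is consistent with the fact that the depicted quiver is symmetric under reversal of arrows.
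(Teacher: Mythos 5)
Your proposal is correct and matches the paper exactly: the paper presents Corollary~\ref{cor25} as an immediate consequence of Proposition~\ref{prop24}, and your only added content --- checking that the hypotheses $\nu(\overline{h})=0$ and $\nu(h)\notin\mathbb{Z}$ hold at every vertex $\nu\in\xi$ so the proposition can be applied there --- is precisely the (trivial but necessary) verification the paper leaves implicit.
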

\vspace{1mm}

\subsection{Other self-extensions of simples}\label{s5.3}

\begin{corollary}\label{cor29}
Let $\lambda\in\overline{\mathfrak{h}}^*$ be such that 
$\lambda(\overline{h})=0$ and $\lambda(h)\notin\mathbb{Z}_{\geq 0}$.
Then  we have
\begin{displaymath}
\mathrm{Ext}_{\mathcal{O}}^1(L(\lambda),L(\lambda))\cong\mathbb{C}
\qquad\text{ and }\qquad
\mathrm{Ext}_{\widetilde{\mathcal{O}}}^1(L(\lambda),L(\lambda))
\cong\mathbb{C}^2.
\end{displaymath}
\end{corollary}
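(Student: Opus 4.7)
The plan is to observe that this corollary is exactly the $\mu=\lambda$ diagonal case of Propositions~\ref{prop22} and \ref{prop24}, and that the arguments given there actually go through under the present weaker hypothesis $\lambda(h)\notin\mathbb{Z}_{\geq 0}$ rather than $\lambda(h)\notin\mathbb{Z}$. The only ingredients of those proofs, restricted to the self-extension case, are the description of the submodule lattice of $\Delta(\lambda)$ supplied by Corollary~\ref{cor52} and the arithmetic condition $\lambda(h)\neq 0$. Both of these are available here, since $0\in\mathbb{Z}_{\geq 0}$ and hence our hypothesis rules out $\lambda(h)=0$, while Corollary~\ref{cor52} requires precisely $\lambda(\overline{h})=0$ and $\lambda(h)\notin\mathbb{Z}_{\geq 0}$.

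For the statement about $\mathcal{O}$, I would reproduce the relevant half of the proof of Proposition~\ref{prop22}: take a non-split short exact sequence $0\to L(\lambda)\to M\to L(\lambda)\to 0$ in $\mathcal{O}$, note that $M$ is generated by $M^{\lambda}$, which is a two-dimensional indecomposable $U(\overline{\mathfrak{h}})$-module with $h$ acting as the scalar $\lambda(h)$ and $\overline{h}$ acting as a non-zero nilpotent (since $M\in\mathcal{O}$ forces $h$ to be diagonalizable). Such a module structure is unique up to isomorphism, so $M$ is a quotient of $\Delta(M^{\lambda}):=U(\mathfrak{g})\otimes_{U(\overline{\mathfrak{b}})}M^{\lambda}$. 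The submodule $K$ generated by $\overline{f}w$ and $\lambda(h)\overline{f}v-fw$ (the latter combination being meaningful thanks to $\lambda(h)\neq 0$) is the unique candidate, and the verification that $\Delta(M^{\lambda})/K$ has length two with both subquotients isomorphic to $L(\lambda)$ is controlled by Corollary~\ref{cor52} applied to $\Delta(\lambda)$ and $\Delta(\lambda-\alpha)$. This forces $\mathrm{Ext}^1_{\mathcal{O}}(L(\lambda),L(\lambda))\cong\mathbb{C}$.

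The statement for $\widetilde{\mathcal{O}}$ is handled in the same way as Proposition~\ref{prop24}. Now $M^{\lambda}$ is allowed to be an arbitrary two-dimensional indecomposable $U(\overline{\mathfrak{h}})$-module with both composition factors isomorphic to $\mathbb{C}_{\lambda}$, and the space of such self-extensions is $\mathrm{Ext}^1_{U(\overline{\mathfrak{h}})}(\mathbb{C}_{\lambda},\mathbb{C}_{\lambda})\cong\mathbb{C}^2$, because $\overline{\mathfrak{h}}$ is abelian of dimension two. Parabolic induction followed by taking the canonical length-two quotient defines a map from this $\mathbb{C}^2$ to $\mathrm{Ext}^1_{\widetilde{\mathcal{O}}}(L(\lambda),L(\lambda))$. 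Injectivity is immediate on restricting to the generalized $\lambda$-weight space, and surjectivity follows since any extension $M$ is generated by $M^{\lambda}$ and hence arises as a suitable quotient of $\Delta(M^{\lambda})$, uniqueness again being guaranteed by Corollary~\ref{cor52}.

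I expect the main obstacle, essentially bookkeeping, to be the verification that the explicit submodule $K\subset\Delta(M^{\lambda})$ used in each of the two parts remains well-defined and unique throughout the broader range of $\lambda(h)$ allowed here, in particular when $\lambda(h)$ is a negative integer. Since the only non-trivial algebraic inputs are $\lambda(h)\neq 0$ and the uniserial structure of $\Delta(\lambda)$ and $\Delta(\lambda-\alpha)$ from Corollary~\ref{cor52}, both of which persist, no new arguments beyond those of Propositions~\ref{prop22} and \ref{prop24} should be needed.
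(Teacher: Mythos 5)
Your proposal is correct and follows exactly the paper's route: the paper's proof of Corollary~\ref{cor29} simply states that the claim follows from the corresponding parts of the proofs of Propositions~\ref{prop22} and~\ref{prop24}, which is precisely your observation that those self-extension arguments use only $\lambda(\overline{h})=0$, $\lambda(h)\neq 0$, and the structure of $\Delta(\lambda)$ from Proposition~\ref{prop51}/Corollary~\ref{cor52}, all of which persist under the weaker hypothesis $\lambda(h)\notin\mathbb{Z}_{\geq 0}$. You have merely spelled out the details that the paper leaves implicit.
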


\begin{proof}
The follows directly from the corresponding parts in the
proofs of  Proposition~\ref{prop22} and Proposition~\ref{prop24}.
\end{proof}

\begin{lemma}\label{lem27-1}
We have
\begin{displaymath}
\mathrm{Ext}_{\mathcal{O}}^1(L(0),L(0))=
\mathrm{Ext}_{\widetilde{\mathcal{O}}}^1(L(0),L(0))=0.
\end{displaymath}
\end{lemma}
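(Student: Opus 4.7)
The plan is to observe that $L(0)$ is the trivial $1$-dimensional $\mathfrak{g}$-module, so the question reduces to a Lie-algebra cohomology computation that vanishes because $\mathfrak{g}$ is perfect.

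First I would consider any short exact sequence $0\to L(0)\to M\to L(0)\to 0$ in $\widetilde{\mathcal{O}}$. Since $\dim L(0)=1$, the module $M$ is $2$-dimensional with basis $\{w,v\}$, where $w$ spans the submodule $L(0)$ and $v$ is a preimage of a generator of the quotient. Because $\mathfrak{g}$ acts as zero on both the submodule and the quotient, the full action on $M$ must be of the form $a\cdot w=0$ and $a\cdot v=\phi(a)w$ for a unique linear map $\phi:\mathfrak{g}\to\mathbb{C}$. Expanding $[a,b]\cdot v=a\cdot(b\cdot v)-b\cdot(a\cdot v)$ yields $\phi([a,b])=0$, so $\phi$ factors through the abelianization $\mathfrak{g}/[\mathfrak{g},\mathfrak{g}]$. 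Next I would verify that $\mathfrak{g}$ is perfect: the relations $h=[e,f]$ and $\overline{h}=[e,\overline{f}]$ place $h$ and $\overline{h}$ in $[\mathfrak{g},\mathfrak{g}]$, and then $e=\tfrac12[h,e]$, $f=-\tfrac12[h,f]$, $\overline{e}=\tfrac12[h,\overline{e}]$, $\overline{f}=-\tfrac12[h,\overline{f}]$ account for the remaining generators. Therefore $\phi=0$ and the extension splits as $M\cong L(0)\oplus L(0)$. The same argument handles $\mathcal{O}$, since every short exact sequence in $\mathcal{O}$ is also one in $\widetilde{\mathcal{O}}$, and the split module manifestly lies in $\mathcal{O}$.

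No step is a genuine obstacle — the lemma amounts to the standard identity $H^1(\mathfrak{g},\mathbb{C})=(\mathfrak{g}/[\mathfrak{g},\mathfrak{g}])^*=0$ for a perfect Lie algebra, and one could equivalently invoke Theorem~\ref{thm6} to reduce the $\widetilde{\mathcal{O}}$-computation to $\mathrm{Ext}^1_{\mathfrak{g}\text{-}\mathrm{Mod}}(L(0),L(0))$ directly. It is instructive to contrast this with the proofs of Propositions~\ref{prop22} and \ref{prop24}: the vectors used there to construct a non-split self-extension of $L(\lambda)$ involve the scalar $\lambda(h)$, which vanishes precisely at $\lambda=0$; the construction degenerates and $L(0)$ becomes the unique exceptional simple among those with $\lambda(\overline{h})=0$.
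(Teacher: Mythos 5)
Your proof is correct. It is close in spirit to the paper's argument but reaches the conclusion by a slightly different route: the paper first uses the weight structure (namely $M^{\pm\alpha}=0$ for a self-extension $M$ of $L(0)$) to see that $e,f,\overline{e},\overline{f}$ annihilate $M$, and only then invokes $h=[e,f]$ and $\overline{h}=[\overline{e},f]$ to kill $h$ and $\overline{h}$; you instead bypass the support argument entirely and observe that the extension datum is a $1$-cocycle $\phi\in(\mathfrak{g}/[\mathfrak{g},\mathfrak{g}])^*$, which vanishes because $\mathfrak{g}$ is perfect (your verification of perfectness, including $\overline{h}=[e,\overline{f}]$, is accurate). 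Your version computes $\mathrm{Ext}^1_{\mathfrak{g}\text{-}\mathrm{Mod}}(L(0),L(0))=H^1(\mathfrak{g},\mathbb{C})=0$ outright, which is the stronger statement consistent with Theorem~\ref{thm6}, and makes transparent why only the trivial module is exceptional; the paper's version stays inside the weight-space formalism used everywhere else in Section~\ref{s5}. Both are complete; the only point worth making explicit in your write-up is that a sequence in $\mathcal{O}$ or $\widetilde{\mathcal{O}}$ that splits in $\mathfrak{g}$-Mod also splits there, because these are full subcategories (so the splitting morphism automatically lives in them).
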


\begin{proof}
The elements $e$, $f$, $\overline{e}$ and $\overline{f}$
must annihilate any self-extension $M$ of $L(0)$ since
$M^{\pm\alpha}=0$, for such $M$.
As $h=[e,f]$ and $\overline{h}=[\overline{e},f]$, it follows
that both $h$ and $\overline{h}$ must annihilate $M$ as well.
Therefore $M$ splits.
\end{proof}

\begin{proposition}\label{prop27}
Let $\lambda\in\overline{\mathfrak{h}}^*$ be such that 
$\lambda(\overline{h})=0$ and $\lambda(h)\in\mathbb{Z}_{>0}$.
Then  we have
\begin{displaymath}
\mathrm{Ext}_{\mathcal{O}}^1(L(\lambda),L(\lambda))\cong
\mathrm{Ext}_{\widetilde{\mathcal{O}}}^1(L(\lambda),L(\lambda))
\cong\mathbb{C}.
\end{displaymath}
\end{proposition}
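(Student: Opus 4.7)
The plan is to adapt the strategies of Propositions~\ref{prop22} and~\ref{prop24} to the case of a finite-dimensional $L(\lambda)$ (with $\lambda(h)=n\in\mathbb{Z}_{>0}$). First I would show that $\mathrm{Ext}^1_{\mathcal{O}}=\mathrm{Ext}^1_{\widetilde{\mathcal{O}}}$: for any self-extension $M$ of $L(\lambda)$ in $\widetilde{\mathcal{O}}$, the support $\mathrm{supp}(M)$ is contained in $\{\lambda-k\alpha:0\le k\le n\}$, so $f^{n+1}w=0$ and $ew=0$ for every $w\in M^\lambda$. The commutator identity $[e,f^{n+1}]=(n+1)f^n(h-n)$ then yields $f^n(h-n)w=0$, and since $(h-n)w$ lies in the submodule $L_1:=L(\lambda)\subset M$ while $f^n$ is injective on the one-dimensional space $L_1^\lambda$, we deduce $(h-n)w=0$. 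Hence $h$ acts by the scalar $n$ on $M^\lambda$, so by the adjoint action of $h$ on $U(\mathfrak{g})$ it acts diagonalizably on $M$, placing $M$ in $\mathcal{O}$.

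Next, for a non-split $M$, I would analyze the $\overline{\mathfrak{h}}$-module structure on $M^\lambda$. If $\overline{h}$ acts as zero, then any vector $w\in M^\lambda\setminus L_1^\lambda$ is a highest-weight vector, so by adjunction $U(\mathfrak{g})w$ is a quotient of $\Delta(\lambda)$. Since $\dim\Delta(\lambda)^\lambda=1$, the simple module $L(\lambda)$ occurs as a composition factor of $\Delta(\lambda)$ exactly once (at the top), which combined with $U(\mathfrak{g})w\cap L_1\in\{0,L_1\}$ forces $U(\mathfrak{g})w\cap L_1=0$ and gives a splitting. Consequently in a non-split $M$, the space $M^\lambda$ must be the unique indecomposable length-two $\overline{\mathfrak{h}}$-module with $h=nI$ and $\overline{h}$ a nontrivial Jordan block.

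For existence I would construct $M:=L(\lambda)\oplus L(\lambda)$ as a vector space with $\mathfrak{sl}_2$ acting diagonally and with $a\otimes x$ (for $a\in\mathfrak{sl}_2$) acting by $(u,u')\mapsto(au',0)$, where $au'$ uses the $\mathfrak{sl}_2$-action on $L(\lambda)$. The $\mathfrak{g}$-module axioms are verified directly from $[\mathfrak{sl}_2\otimes x,\mathfrak{sl}_2\otimes x]=0$ and $[X,a\otimes x]=[X,a]\otimes x$. This extension is non-split because any putative complement would force $au'=0$ for all $a\in\mathfrak{sl}_2$ and $u'\in L(\lambda)$, impossible for $n\ge 1$ by faithfulness of $\mathfrak{sl}_2$ on $L(\lambda)$. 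The construction also explains Lemma~\ref{lem27-1}: for $n=0$ the trivial action makes $au'=0$ automatically, and the extension splits.

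Uniqueness, the main obstacle, would be proved using the adjunction surjection $\Delta(M^\lambda):=U(\mathfrak{g})\otimes_{U(\overline{\mathfrak{b}})}M^\lambda\tto M$. Choose a basis $\{v,w\}$ of $M^\lambda$ with $\overline{h}w=v$; modding out by $\mathrm{rad}(\Delta(\lambda))\cdot v\subset U(\mathfrak{g})v\cong\Delta(\lambda)$ yields an intermediate module $\tilde Q$ with short exact sequence
\begin{displaymath}
0\to L(\lambda)\to\tilde Q\to\Delta(\lambda)\to 0.
\end{displaymath}
The kernel $K'$ of $\tilde Q\tto M$ projects isomorphically onto $\mathrm{rad}(\Delta(\lambda))$ and complements the copy of $L(\lambda)$ inside the preimage of $\mathrm{rad}(\Delta(\lambda))$ in $\tilde Q$. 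Such complements form an affine space over $\mathrm{Hom}_{\mathfrak{g}}(\mathrm{rad}(\Delta(\lambda)),L(\lambda))$, which vanishes because $L(\lambda)$ is not a composition factor of $\mathrm{rad}(\Delta(\lambda))$. Hence $K'$, and therefore $M$, is unique, yielding $\dim\mathrm{Ext}^1\le 1$. The delicate point here is identifying the torsor structure and using the composition-factor structure of $\Delta(\lambda)$ from Section~\ref{s6.2}.
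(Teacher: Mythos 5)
Your proposal is correct, and while its overall architecture (reduce to $\mathcal{O}$, pin down $M^{\lambda}$, exhibit one non-split extension) matches the paper's, two of the three steps are carried out by genuinely different arguments. For the reduction $\mathrm{Ext}^1_{\widetilde{\mathcal{O}}}=\mathrm{Ext}^1_{\mathcal{O}}$, the paper simply invokes Weyl's complete reducibility theorem: any self-extension of the finite dimensional module $L(\lambda)$ is finite dimensional, so $h$ acts diagonalizably and the extension lies in $\mathcal{O}$. Your computation with $[e,f^{n+1}]=(n+1)f^n(h-n)$, the vanishing of $M^{\lambda+\alpha}$ and $M^{\lambda-(n+1)\alpha}$, and the injectivity of $f^n$ on $L_1^{\lambda}$ reaches the same conclusion by hand; it is longer but more self-contained and makes visible exactly where $n>0$ enters. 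For the upper bound, the paper observes that $M$ is indecomposable iff the $\mathbb{C}[\overline{h}]$-module $M^{\lambda}$ is, and since $\mathbb{C}[\overline{h}]$ is a polynomial ring in one variable this immediately caps the dimension at one; your route through $\Delta(M^{\lambda})$, the intermediate quotient $\tilde Q$, and the torsor of complements over $\mathrm{Hom}_{\mathfrak{g}}(\mathrm{rad}\,\Delta(\lambda),L(\lambda))=0$ proves the stronger statement that the non-split middle term is unique up to isomorphism, in the spirit of Propositions~\ref{prop22} and~\ref{prop24}. Note that passing from "the non-split $M$ is unique up to isomorphism" to "$\dim\mathrm{Ext}^1\le 1$" uses the standard fact that, for a simple module with scalar endomorphisms, isomorphism classes of non-split self-extensions correspond to points of $\mathbb{P}(\mathrm{Ext}^1)$; you should say this explicitly, though the paper's own shortcut (injectivity of the linear restriction map $[M]\mapsto[M^{\lambda}]$ into $\mathrm{Ext}^1_{\mathbb{C}[\overline{h}]}(\mathbb{C}_0,\mathbb{C}_0)\cong\mathbb{C}$) requires a comparable amount of care. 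Your existence construction is the paper's, written basis-free, and your remark that it degenerates to the split extension when $n=0$ is a nice complement to Lemma~\ref{lem27-1}.
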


\begin{proof}
By Weyl's complete reducibility theorem, $h$ acts diagonalizably
on any finite dimensional $\mathfrak{g}$-module.
Hence any self-extension of $L(\lambda)$ lives in 
$\mathcal{O}$. Therefore it is enough to prove that 
$\mathrm{Ext}_{\mathcal{O}}^1(L(\lambda),L(\lambda))\cong\mathbb{C}$.

Let $M$ be a self-extension of $L(\lambda)$. Then
$M^{\lambda}$ is an $\mathbb{C}[\overline{h}]$-module and,
similarly to Proposition~\ref{prop22}, $M$ is indecomposable
if and only if $M^{\lambda}$ is. As
$\mathbb{C}[\overline{h}]$ is a polynomial algebra in one variable,
this implies that 
$\mathrm{Ext}_{\mathcal{O}}^1(L(\lambda),L(\lambda))$ is at most
one dimensional. To prove that 
$\mathrm{Ext}_{\mathcal{O}}^1(L(\lambda),L(\lambda))$
is exactly one-dimensional, it is enough to construct one
non-split self-extension of $L(\lambda)$, which we do below.

Let $n:=\lambda(h)\in\mathbb{Z}_{\geq 0}$. By Example~1.24 in \cite{Ma}, 
$L(\lambda)$ has a basis $\{v_{0},v_1,\dots,v_n\}$ such that
\begin{displaymath}
ev_{i}=iv_{i-1},\quad
fv_{i}=(n-i)v_{i+1},\quad
hv_{i}=(n-2i)v_i,\quad\text{ for }\,\, i=0,1,\dots,n.
\end{displaymath}
Take another copy $\overline{L(\lambda)}$  of $L(\lambda)$ with basis 
$\{\overline{v}_{0},\overline{v}_1,\dots,\overline{v}_n\}$
and similarly defined action. Consider 
$M=L(\lambda)\oplus\overline{L(\lambda)}$ and define
\begin{displaymath}
\overline{e}v_{i}=i\overline{v}_{i-1},\quad
\overline{f}v_{i}=(n-i)\overline{v}_{i+1},\quad
\overline{h}v_{i}=(n-2i)\overline{v}_i,\quad\text{ for }\,\, i=0,1,\dots,n, 
\end{displaymath}
and $\overline{e}\overline{L(\lambda)}=
\overline{f}\overline{L(\lambda)}=\overline{h}\overline{L(\lambda)}=0$.
It is straightforward that this defines on $M$ the structure of
a $\mathfrak{g}$-module. As the action of $\overline{h}$ on $v_0$
is non-zero (here the condition $n>0$ is crucial!), the module
$M$ is a non-split self-extension of $L(\lambda)$.
This completes the proof.
\end{proof}

\subsection{Difficult integral blocks}\label{s5.4}

\begin{proposition}\label{prop32}
Let $\lambda\in\overline{\mathfrak{h}}^*$ be such that
$\lambda(\overline{h})= 0$.
\begin{enumerate}[$($a$)$]
\item\label{prop32.1} We have
\begin{displaymath}
\mathrm{Ext}_{\mathcal{O}}^1(L(\lambda),L(\lambda-\alpha))\cong
\mathrm{Ext}_{\widetilde{\mathcal{O}}}^1(L(\lambda),L(\lambda-\alpha))\cong
\mathbb{C}. 
\end{displaymath}
\item\label{prop32.2} If $\lambda(h)=n\in\mathbb{Z}_{\geq 0}$,
then we have
\begin{displaymath}
\mathrm{Ext}_{\mathcal{O}}^1(L(\lambda),L(\lambda-(n+1)\alpha))\cong
\mathrm{Ext}_{\widetilde{\mathcal{O}}}^1(L(\lambda),L(\lambda-(n+1)\alpha))
\cong\mathbb{C}. 
\end{displaymath}
\item\label{prop32.3} If $\lambda(h)\neq n\in\mathbb{Z}_{>0}$,
then we have
\begin{displaymath}
\mathrm{Ext}_{\mathcal{O}}^1(L(\lambda),L(\lambda-(n+1)\alpha))=
\mathrm{Ext}_{\widetilde{\mathcal{O}}}^1(L(\lambda),L(\lambda-(n+1)\alpha))
=0. 
\end{displaymath}

\end{enumerate}
\end{proposition}

\begin{proof}
We start with claim~\eqref{prop32.1}. Assume that
\begin{displaymath}
0\to L(\lambda-\alpha)\to M\to L(\lambda)\to 0 
\end{displaymath}
is a non-split short exact sequence. Then, similarly to 
Proposition~\ref{prop22}, $M$ must be a quotient of
$\Delta(\lambda)$. If  $\lambda(h)\notin\mathbb{Z}_{\geq 0}$,
then from Corollary~\ref{cor52} it follows that
$\Delta(\lambda)$  has a unique quotient with 
correct composition subquotients. 
If  $\lambda(h)\in\mathbb{Z}_{\geq 0}$,
then from Lemma~\ref{lem58} it follows that
$\Delta(\lambda)$  has a unique quotient with 
correct composition subquotients. This completes the proof of
claim~\eqref{prop32.1}

We proceed with claim~\eqref{prop32.2}. Assume that
$\lambda(h)=n\in\mathbb{Z}_{\geq 0}$ and
\begin{displaymath}
0\to L(\lambda-(n+1)\alpha)\to M\to L(\lambda)\to 0 
\end{displaymath}
is a non-split short exact sequence. Then, from 
Lemma~\ref{lem57} it follows that
$\Delta(\lambda)$  has a unique quotient with 
correct composition subquotients. This completes the proof of
claim~\eqref{prop32.2}.

Proposition~\ref{prop51}, Lemma~\ref{lem57} and Lemma~\ref{lem58}
imply that the only socle components possible in length
two quotients of $\Delta(\lambda)$ are $\Delta(\lambda-\alpha)$
and $\Delta(\lambda-(n+1)\alpha)$, and the latter one is
only possible under the additional
assumption that $\lambda(h)=n\in\mathbb{Z}_{\geq 0}$.
This implies claim~\eqref{prop32.3} and completes the proof. 
\end{proof}

Combining Proposition~\ref{prop32}, 
Corollary~\ref{cor29}, Lemma~\ref{lem27-1}, Corollary~\ref{cor19}
and Proposition~\ref{prop27}, we obtain:

\begin{corollary}\label{cor33}
{\hspace{1mm}}

\begin{enumerate}[$($a$)$]
\item \label{cor33.1}
The Gabriel quiver of 
${\mathcal{O}}(\mathbb{Z}\alpha)$ is:
\vspace{1mm}

\begin{displaymath}
\xymatrix{
0\ar@/^1pc/[rr]\ar@/^1pc/[d]&& 
\alpha\ar@(ul,ur)[]\ar@/^1pc/[rr]\ar@/^1pc/[ll]\ar@/^1pc/[d]&& 
2\alpha\ar@(ul,ur)[]\ar@/^1pc/[rr]\ar@/^1pc/[ll]\ar@/^1pc/[d]&& 
\dots\ar@/^1pc/[ll]\\
\text{-}\alpha\ar@(dr,dl)[]\ar@/^1pc/[rr]\ar@/^1pc/[u]&& 
\text{-}2\alpha\ar@(dr,dl)[]\ar@/^1pc/[rr]\ar@/^1pc/[ll]\ar@/^1pc/[u]&& 
\text{-}3\alpha\ar@(dr,dl)[]\ar@/^1pc/[rr]\ar@/^1pc/[ll]\ar@/^1pc/[u]&& 
\dots\ar@/^1pc/[ll]\\
}
\end{displaymath}
\vspace{5mm}
\item\label{cor33.2}
The Gabriel quiver of 
$\widetilde{\mathcal{O}}(\mathbb{Z}\alpha)$ is:
\vspace{1mm}

\begin{displaymath}
\xymatrix{
0\ar@/^1pc/[rr]\ar@/^1pc/[d]&& 
\alpha\ar@(ul,ur)[]\ar@/^1pc/[rr]\ar@/^1pc/[ll]\ar@/^1pc/[d]&& 
2\alpha\ar@(ul,ur)[]\ar@/^1pc/[rr]\ar@/^1pc/[ll]\ar@/^1pc/[d]&& 
\dots\ar@/^1pc/[ll]\\
\text{-}\alpha\ar@(d,dr)[]\ar@(d,dl)[]
\ar@/^1pc/[rr]\ar@/^1pc/[u]&& 
\text{-}2\alpha\ar@(d,dr)[]\ar@(d,dl)[]
\ar@/^1pc/[rr]\ar@/^1pc/[ll]\ar@/^1pc/[u]&& 
\text{-}3\alpha\ar@(d,dr)[]\ar@(d,dl)[]
\ar@/^1pc/[rr]\ar@/^1pc/[ll]\ar@/^1pc/[u]&& 
\dots\ar@/^1pc/[ll]\\
}
\end{displaymath}
\vspace{5mm}
\item\label{cor33.3}
The Gabriel quiver of 
${\mathcal{O}}(\frac{1}{2}\alpha+\mathbb{Z}\alpha)$ is:
\vspace{1mm}

\begin{displaymath}
\xymatrix{
&&
\frac{1}{2}\alpha\ar@/^1pc/[rr]\ar@/^1pc/[d]\ar@/_2pc/[dll]\ar@(ul,ur)[]&& 
\frac{1}{2}\alpha\text{$+$}\alpha\ar@(ul,ur)[]\ar@/^1pc/[rr]\ar@/^1pc/[ll]\ar@/^1pc/[d]&& 
\frac{1}{2}\alpha\text{$+$}2\alpha\ar@(ul,ur)[]\ar@/^1pc/[rr]\ar@/^1pc/[ll]\ar@/^1pc/[d]&& 
\dots\ar@/^1pc/[ll]\\
\frac{1}{2}\alpha\text{-}\alpha\ar@/^1pc/[rr]\ar@(dr,dl)[]\ar@/^1pc/[rru]&&
\frac{1}{2}\alpha\text{-}2\alpha
\ar@(dr,dl)[]\ar@/^1pc/[rr]\ar@/^1pc/[u]\ar@/^1pc/[ll]&& 
\frac{1}{2}\alpha\text{-}3\alpha
\ar@(dr,dl)[]\ar@/^1pc/[rr]\ar@/^1pc/[ll]\ar@/^1pc/[u]&& 
\frac{1}{2}\alpha\text{-}4\alpha
\ar@(dr,dl)[]\ar@/^1pc/[rr]\ar@/^1pc/[ll]\ar@/^1pc/[u]&& 
\dots\ar@/^1pc/[ll]\\
}
\end{displaymath}
\vspace{5mm}
\item\label{cor36}
The Gabriel quiver of 
$\widetilde{\mathcal{O}}(\frac{1}{2}\alpha+\mathbb{Z}\alpha)$ is:
\vspace{1mm}

\begin{displaymath}
\xymatrix{
&&
\frac{1}{2}\alpha\ar@/^1pc/[rr]\ar@/^1pc/[d]\ar@/_2pc/[dll]\ar@(ul,ur)[]&& 
\frac{1}{2}\alpha\text{$+$}\alpha\ar@(ul,ur)[]\ar@/^1pc/[rr]\ar@/^1pc/[ll]\ar@/^1pc/[d]&& 
\frac{1}{2}\alpha\text{$+$}2\alpha\ar@(ul,ur)[]\ar@/^1pc/[rr]\ar@/^1pc/[ll]\ar@/^1pc/[d]&& 
\dots\ar@/^1pc/[ll]\\
\frac{1}{2}\alpha\text{-}\alpha\ar@/^1pc/[rr]
\ar@(d,dr)[]\ar@(d,dl)[]\ar@/^1pc/[rru]&&
\frac{1}{2}\alpha\text{-}2\alpha
\ar@(d,dr)[]\ar@(d,dl)[]\ar@/^1pc/[rr]\ar@/^1pc/[u]\ar@/^1pc/[ll]&& 
\frac{1}{2}\alpha\text{-}3\alpha
\ar@(d,dr)[]\ar@(d,dl)[]\ar@/^1pc/[rr]\ar@/^1pc/[ll]\ar@/^1pc/[u]&& 
\frac{1}{2}\alpha\text{-}4\alpha
\ar@(d,dr)[]\ar@(d,dl)[]\ar@/^1pc/[rr]\ar@/^1pc/[ll]\ar@/^1pc/[u]&& 
\dots\ar@/^1pc/[ll]\\
}
\end{displaymath}
\end{enumerate}
\end{corollary}
\vspace{7mm}

\textbf{Acknowledgements:} This research was partially supported by
the Swedish Research Council and G{\"o}ran Gustafsson Stiftelse.
We thank the referee for helpful comments.
\vspace{5mm}

\vspace{2mm}

\noindent
V.~M.: Department of Mathematics, Uppsala University, Box. 480,
SE-75106, Uppsala,\\ SWEDEN, email: {\tt mazor\symbol{64}math.uu.se}

\noindent
C.~S.: Department of Mathematics, Uppsala University, Box. 480,
SE-75106, Uppsala,\\ SWEDEN, email: {\tt christoffer.soderberg\symbol{64}math.uu.se}

\end{document}